\providecommand\abstractname{Abstract}
\def\abstract{}
\renewenvironment{abstract}{%
  \centering\small
  \textbf\abstractname
  \list{}{\leftmargin2cm \rightmargin\leftmargin}
  \item\relax
}{%
  \endlist \par\bigskip
}
\title{ \textsc{Uniform in $N$ Global Well-Posedness of the Time-Dependent Hartree-Fock-Bogoliubov Equations  in $\rr^{1+1}$}}
\author{ \normalsize \textsc{Jacky Jia Wei Chong}}
\affil{\textsc{University of Maryland, College Park}}
\date{}                                           % Activate to display a given date or no date
\newcommand{\Lp}[2]{\left\Vert \, #1 \, \right\Vert_{#2}}
\newcommand{\llp}[1]{ \Vert \, #1 \, \Vert }
\newcommand{\rr}{\mathbb{R}}
\newcommand{\nn}{\mathbb{N}}
\newcommand{\bd}{\partial}
\newcommand{\lapl}{\Delta}
\newcommand*{\bigchi}{\mbox{\Large$\chi$}}
\newcommand{\Tr}{\operatorname{Tr}}
\newcommand{\grad}{\nabla}
\newcommand{\vect}[1]{\mathbf{ #1 }}
\newcommand{\ch}{\operatorname{ch}}
\newcommand{\sh}{\operatorname{sh}}
\newtheorem{thm}{Theorem}[section]
\newtheorem{lem}[thm]{Lemma}
\newtheorem{prop}[thm]{Proposition}
\newtheorem{cor}[thm]{Corollary}
\newtheorem{remark}[thm]{Remark}
\theoremstyle{definition}
\begin{document}

\maketitle
\begin{abstract}We prove the global well-posedness of the time-dependent
Hartree-Fock-Bogoliubov (TDHFB) equations in $\rr^{1+1}$ with two-body interaction
 potential of the form $N^{-1}v_N(x) = N^{\beta-1} v(N^\beta x)$ where $v\geq 0$ is a 
sufficiently regular radial function, i.e. $v \in L^1(\rr)\cap C^\infty(\rr)$. In 
particular, using methods of dispersive PDEs similar to the ones used in \cite{GrillakisMachedon}, we
 are able to show for any scaling parameter $\beta>0$ the TDHFB equations are globally 
well-posed in some Strichartz-type spaces independent of $N$, cf. \cite{BaBreCFSig}.  
\end{abstract}
\section{Introduction}
Let us consider a closed system of $N$ spinless, identical, non-relativistic interacting bosons in $\rr^d$ for $d\leq 3$ 
with pairwise interaction potential $\lambda w$ 
where $\lambda$ is the coupling constant. The evolution of the 
system in the bosonic space $\otimes^N_sL^2(\rr^d)$ is governed by the linear Schr\"odinger equation
\begin{align}
 \frac{1}{i}\frac{\bd}{\bd t} \Psi_N(t, X_N) = \sum^N_{i=1}\lapl_{x_i}\Psi_N(t, X_N)-\lambda\sum_{1\leq i<j \leq N}w(x_i-x_j)\Psi_N(t, X_N)
\end{align}
with $X_N = (x_1, \ldots, x_N) \in \rr^{dN}$. In this article, 
we are interested in the model where both
the kinetic energy and the interaction potential energy are scaled in a similar fashion. In particular, since 
the Hamiltonian
\begin{align}\label{Hamiltonian}
 H_N := \sum^N_{i=1}\lapl_{x_i}-\lambda\sum_{1\leq i<j \leq N}w(x_i-x_j)
\end{align}
scales like $\mathcal{O}(N)+\lambda\mathcal{O}(N^2)$, then the energy of each particle 
is $\mathcal{O}(1)$ provided the coupling constant $\lambda$ is $\mathcal{O}(N^{-1})$, which we called the \emph{mean-field scaling} of (\ref{Hamiltonian}). 
With this scaling, we define the \emph{mean-field limit}\footnote{cf. \cite{frohlich}. A more rigorous definition of the mean-field limit refers to the factorization of the marginal density matrices, for a system of initially uncorrelated particles,  into tensor products of mean fields in trace norm as $N\rightarrow \infty$. cf. Ch 1.11 of \cite{golse}.} to be the singular limit of (\ref{Hamiltonian}) as $N\rightarrow \infty$. Some of the more recent quantitative studies  on the rate of convergence of mean-field limit toward Hartree dynamics can be found in \cite{RS, CLS, CL, kuz}.  
 
To physically motivate the mean-field model, let us consider $N$ particles inside a fixed box\footnote{The box model is used to simplify the exposition. Alternatively, we could have considered $N$ particles in $\rr^d$ subjected to some harmonic trapping potential, i.e. x
\begin{align*}
H_N = \sum^N_{i=1}\{\lapl_{x_i} - v_{\text{ext}}(x_i)\} - \sum_{1 \leq i < j\leq N} w(x_i-x_j)
\end{align*}
where $v_{\text{ext}}$ is small inside the box $[-L, L]$ and large otherwise.} with volume $V =\ell^d$
subjected to either Robin or Neumann boundary conditions. Furthermore, assume the particles interact through a two-body repulsive potential $w$ (with coupling constant $\lambda$ set to 1).
Then the particles will uniformly spread themselves inside the box with an 
average separation distance of $N^{-1/d} \ell$ since the average volume occupied 
by a particle is $N^{-1}\ell^d$. In particular, we are interested in the \emph{dilute gas model}, that is the case when $N^{-1/d}\ell\gg 1$. 
Following a scaling argument, one can show that the dynamics generated by the Hamiltonian (\ref{Hamiltonian}) is
equivalent to the dynamics generated by the rescaled Hamiltonian\footnote{To preserve the dynamics, we will need to rescale the time by a factor of $N^{-2}$.}  
\begin{align}\label{scaled-Hamil2}
 \frac{1}{N^{3-d}}\sum^N_{i=1} \lapl_{y_i} - \frac{1}{N} \sum_{1 \leq i < j \leq N} w_N(y_i-y_j)
\end{align}
where $w_N(y) = N^dw(N y)$ provided we set the length scale of $y_i$ to order 1\footnote{Here we are assuming $x_i$ is on the length scale $\ell\sim N$. }. In the case $d=3$, we see that (\ref{scaled-Hamil2}) gives us a mean-field model for the particles in a unit box with interactions $v_N$.
 Finally, if we take the \emph{dilute limit}, $N^{-1/d}\ell\rightarrow \infty$,
 in the box $\ell^3$, we essentially recover the mean-field limit
 of $N$ weakly interacting particles in the unit box. 
In particular, the 3D mean-field model in the unit box is equivalent to
the strongly interacting dilute gas model in a box. We refer
 the interested reader to \cite{Lewin, lieb, golse} for more in-depth discussions.

Motivated by the above discussion\footnote{It should be noted that the 1D and 2D
 mean-field model can only correspond to the weakly interacting dense gas model.},
 we are lead to consider the mean-field Hamiltonian
\begin{align}\label{mean-field}
 H_{N, \text{mf}} = \sum^N_{i=1} \lapl_{x_i} - \frac{1}{N} \sum_{1 \leq i < j \leq N} v_N(x_i-x_j)
\end{align}
where $v_N(x)=N^{d\beta}v(N^\beta x)$ for $d\leq 3$ and $ v \in C^\infty(\rr^d) \cap L^1(\rr^d)$ which is spherically symmetric. 
The reader should take note of the two scaling processes that are involved in the interactions of
 this mean-field model. Aside from the obvious mean-field scaling, we also have the \emph{short-range scaling} of the 
interaction $v$ given by $v_N$ with a tuning parameter $\beta>0$. 
Let us consider the dynamics generated by the mean-field Hamiltonian and let $\Psi_N$ be the solution to
\begin{align}
 \frac{1}{i}\frac{\bd}{\bd t} \Psi_N = H_{N, \text{mf}}\Psi_N
\end{align}
then by rescaling the solution, i.e. defining $\Phi(\tau, y) = \Psi_N(N^{-2\beta}\tau, N^{-\beta}y)$,
we see the dynamics of the rescaled system is governed by the equation
\begin{align}
 \frac{1}{i}\frac{\bd}{\bd \tau}  \Phi = \sum^N_{i=1} \lapl_{y_i} \Phi - N^{(d-2)\beta-1}\sum_{1 \leq i<j \leq N} v(y_i-y_j)\Phi 
\end{align}
In the instance of $d = 3$, we see, at least heuristically, the appearance of 
a critical scaling when $\beta = 1$, which we called the \emph{Gross-Pitaveskii scaling}. 
Some of the important works done for the case $\beta=1$ in illustrating the 
change in the effective dynamics and the emergence of the scattering length 
can be found in \cite{ErdosSchleinYau, BeOSch, BoCSch}. 
 Moreover, it is heuristically clear that there is no critical scaling when $d = 1, 2$. 
To be more specific, for $d  \leq 2$, the 
coupling constant for the interaction of the rescaled system is inversely proportional 
to the number of particles which means the mean-field scaling is more prominent than the 
short-range scaling effect. Thus, we do not expect to see 
any short scale correlation effects. One of the purposes of this article is to offer a preliminary step to 
a rigorous demonstration 
of the fact that there is no development of short scale correlation structure when $d = 1$ for the effective description by showing the effective equations are well-posed for all $\beta>0$.
The case $d = 2$ for all $\beta>0$ is still open. 

Another reason to consider the entire range of $\beta$ in $\rr^{1+1}$ is inspired by 
the Lieb-Liniger model \cite{lieb63, lieb63-2} which is a 1D model for a system of ultradcold 
Bose particles inside the torus endowed with  a pairwise interaction given by the
 repulsive $\delta$-function, i.e.  the Lieb-Liniger Hamiltonian for the $N$-particle
 Bose gas, in appropriate units, is
\begin{align}\label{LLH}
H_N = -\sum^N_{i=1} \frac{\bd^2}{\bd x_i^2} + 2c \sum_{1 \leq i<j \leq N} \delta(x_i-x_j)
\end{align}
where $c\geq 0$ denotes the repulsion strength.  
More specifically, one can view the Lieb-Liniger model on $\rr$ as a 
heuristic endpoint case of our analysis of the dynamics generated by (\ref{mean-field}) in the weak-coupling limit regime, $c\rightarrow 0$. 

Our interest in the model is twofold.
 From a phsyics point of view, the model has an important feature of being exactly solvable
 in the ground state with computable spectrum. Moreover, the recent advancement
 in the techniques of trapping and cooling atoms has opened up a variety of 
possible experimental studies for ultracold Bose gases that are effectively one-dimensional; for a comprehensive survey on the subject, we refer the reader to
\cite{bloch}.  
 Hence a firm mathematical understanding of the dynamics generated by the Hamiltonian (\ref{LLH})
is an indispensable theoretical tool to suggest further experimental investigation 
of certain 1D properties for ultracold Bose gases. In particular, 
an effective description of the dynamics generated by the Lieb-Liniger model would 
provide a simplified way to analyze the dynamics of these effectively one-dimensional 
Bose gases. From a mathematical perspective, the Lieb-Liniger model on $\rr$ 
is the simplest instance of a many-body quantum mechanical model with 
interaction given by the $\delta$-potential. Up to date, there is no rigorous results
 on the effective description of the evolution of any quantum system 
with $\delta$-interaction.

In this article, we are interested in studying the well-posedness of the time-dependent Hartree-Fock-Bogoliubov (TDHFB) 
equations which, in 3D, describes the quantum fluctations of the Bose field around a Bose-Einstein consendate in the ``absolute-zero temperature'' model. 
These equations were first rigorously derived as Euler-Lagrange equations 
in \cite{GrillakisMachedon2015}, which in turn is based on earlier works by the same authors with collaborator in \cite{GMachMarg, GMachMarg2}. 
Later, in \cite{GrillakisMachedon}, Grillakis and Machedon rederived the TDHBF equations
as evolution equations for the Fock space marginal densities subjected to some reduced dynamics and used techniques from dispersive PDEs to study the local well-posedness of the coupled system\footnote{cf. \cite{BaBreCFSig}}. 
The TDHFB equations are 
\begin{subequations}\label{TDHFB}
\begin{align}
 \frac{1}{i}\bd_t\varphi_t =&\ \lapl\varphi_t - (v_N\ast \rho_{\Gamma_t})\varphi_t -\kappa(\Gamma_t^{\varphi_t}) \varphi_t- \kappa(\Lambda_t^{\varphi_t})\overline{\varphi}_t \\
 \frac{1}{i}\bd_t\Gamma_t =&\ [\lapl-v_N\ast \rho_{\Gamma_t}, \Gamma_t]-[\kappa(\Gamma_t^{\varphi_t}), \Gamma_t]-[\kappa(|\varphi_t\rangle\langle\varphi_t|), \Gamma^{\varphi_t}_t]\\
 &\ - \kappa(\Lambda_t^{\varphi_t})\Lambda^\ast_t+\Lambda_t^{\varphi_t}\kappa(\Lambda_t)^\ast-\kappa(\varphi_t^{\otimes 2})(\Lambda_t^{\varphi_t} )^\ast+\varphi_t^{\otimes 2}\kappa(\Lambda^{\varphi_t}_t)^\ast \nonumber\\
 \frac{1}{i}\bd_t \Lambda_t =&\ \{\lapl - v_N\ast\rho_{\Gamma_t}, \Lambda_t\}-\frac{1}{N}\kappa(\Lambda_t)-\{\kappa(\Lambda_t^{\varphi_t}), \bar \Gamma_t\}\\
 &\ -\{\kappa(\overline{\Gamma}_t), \Lambda_t^{\varphi_t}\}-\{\kappa(\overline{\Gamma}_t^{\varphi_t}), \varphi^{\otimes 2}_t\}-\{\kappa(\Lambda_t^{\varphi_t}), \varphi_t\otimes\overline{\varphi}_t\} \nonumber
\end{align}
\end{subequations}
where $\{A, B\} = AB^T+BA^T, \Gamma^\varphi := \Gamma -|\varphi\rangle\langle\varphi| $ and $\Lambda^\varphi := \Lambda - \varphi\otimes\varphi$ and $\kappa:\alpha \rightarrow \kappa(\alpha)$ has the integral kernel given by
\begin{align*}
[\kappa(\alpha)](x, y) = v_N(x-y)\alpha(x, y).
\end{align*}
A more explicit form of the equations in terms of the kernels can be found in \textsection \ref{Proof-of-main}.
Independently and in a different frame work,  Bach, Breteaux, Chen, Fr\" ohlich, and 
Sigal rigorously derived and studied the well-posedness of a set of coupled equations closely related to the above equations in \cite{BaBreCFSig}. More precisely, 
the triplet $(\phi_t, \gamma_t, \sigma_t)$, introduced in \cite{BaBreCFSig}, corresponds to
\begin{subequations}
\begin{align}
\phi_t =&\ \sqrt{N}\varphi_t,\\
  \gamma_t=&\ N(\overline{\Gamma_t-|\varphi_t\rangle\langle\varphi_t|}) = \overline{\sh(k)}\circ \sh(k),\\
   \sigma_t=&\ N(\Lambda_t-\varphi_t\otimes\varphi_t) =  \sh(k)\circ\ch(k)
\end{align}
\end{subequations}
when written in the notations of \cite{GrillakisMachedon2015,GrillakisMachedon}. See \textsection \ref{notation} for more details on the notation. 

More recently, Benedikter, Sok and Solovej use the reformulated Dirac-Frenkel variational principle in the space of reduced density matrices to geometrically approximate\footnote{They were able to show that the Dirac-Frenkel variational principle implies the quasifree reduction principle which was used in \cite{BaBreCFSig}.} the dynamics of  both the bosonic and fermionic many-body systems in \cite{BSS}. Using the variational principle, they provide a rigorous derivation of both the TDHFB equations and the Bogoliubov-de-Gennes equations, also known as the fermionic TDHFB equations, and show the equations are optimal approximations of the many-body dynamics when restricted to the manifold of quasi-free states\footnote{See \textsection 10 in \cite{Solo} for a definition of quasi-free states.}. We also refer the interested reader to \cite{HLLS} for a study of the pseudo-relativistic version of the Bogoliubov-de-Gennes equations. 

\subsection*{Acknowledgement} The author would like to take this opportunity to express his deepest gratitude toward his two advisors M. Grillakis and M. Machedon for all the time and energy they have spent on the author. Moreover,  the author would also like to thank the editors and referees for providing valuable feedbacks. In particular, one of the referees has the author's sincere appreciation for submitting a very detailed and thoughtful review of the article, which significantly contributed to improving the presentation quality of the paper.

\section{Notations and Main Statement}\label{notation}
Let us indicate some of the notations adopted by the article.\

\noindent\textbf{Notations}. Following \cite{GrillakisMachedon}, we use the notations
\begin{align*}
\vect{S}_\pm := \frac{1}{i}\frac{\bd}{\bd t} - \lapl_{x} +\lapl_{y} 
\ \ \ \text{ and } \ \ \ \vect{S} := \frac{1}{i}\frac{\bd}{\bd t} - \lapl_{x} -\lapl_{y}
\end{align*}
to denote the two Schr\"odinger-type differential operators. Moreover, unless specified, $x, y \in \rr$, which means
$\lapl_x = \bd_{xx}$ and, similarly, $\lapl_y = \bd_{yy}$. The two types of semilinear equations, corresponding to the above operators, considered are
 the inhomogeneous von-Neumann Schr\" odinger equation
\begin{align}\label{nl-gamma}
\vect{S}_\pm \Gamma = F
\end{align}
and the inhomogeneous Schr\"odinger equation 
\begin{align}\label{nl-lambda}
\left(\vect{S}+\frac{1}{N}v_N(x-y)\right)\Lambda = F
\end{align}
where $v_N(x) = N^\beta v(N^\beta x)$
and $v \in L^1(\rr)\cap C^\infty(\rr)$. 
\begin{remark}
We assume $v$ is non-negative and even in our presentation since our prime interest is in studying $v_N\rightarrow c\delta$. However, it should be noted that $v$ can be asymmetric and negative when we study the local well-posedness of the TDHFB; whether these facts have interesting physical consequences will not be explored in this article.  
\end{remark}

Next, let us define the space for the initial data. For every $s>0$, we define the space
\begin{align*}
 \mathcal{X}^s=\{(\varphi, \Gamma, \Lambda) \in H^s\times H^s_\text{Herm}\times H^s_{\text{sym}}\}
\end{align*}
with $H^s$ being the Sobolev space $H^s(\rr), H^s_{\text{Herm}}$ the Sobolev space 
$H^s(\rr^2)$ restricted to functions $\Gamma$ such that $\Gamma(x, y) =\overline{\Gamma(y, x)}$, and $H^s_{\text{sym}}$ the Sobolev space
space $H^s(\rr^2)$ restricted to functions $\Lambda$ such that $\Lambda(x, y) = \Lambda(y, x)$. More specifically, $\mathcal{X}^s$ is endowed with
the norm
\begin{align*}
 \llp{(\varphi, \Gamma, \Lambda)}_{\mathcal{X}^s}:=&\ \llp{\langle\grad_x\rangle^s\phi}_{L^2(\rr)}+\llp{(\langle\grad_x\rangle^2\otimes 1+1\otimes \langle \grad_y\rangle^2)^{s/2}\Gamma}_{L^2(\rr^2)}\\
&\ +\llp{(\langle\grad_x\rangle^2\otimes 1+1\otimes \langle \grad_y\rangle^2)^{s/2}\Lambda}_{L^2(\rr^2)}.
\end{align*}
When the context is clear, we use the symbol $\langle \grad_{x, y}\rangle^s$ in place of $(\langle\grad_x\rangle^2\otimes 1+1\otimes \langle \grad_y\rangle^2)^{s/2}$. Furthermore, we study the local well-posedness of our equations in some Strichartz spaces, which are mixed $L^p$ spaces endowed with the norm
\begin{align*}
\llp{u}_{L^q[0, T]L^r(dx)L^s(dy)} := \left(\int^T_0 dt\ \left(\int dx\ \llp{u(t, x, \cdot}_{L^s(dy)}^r\right)^{q/r}\right)^{1/q}
\end{align*}
where the triplet $(q, r, s)$ satisfies some Strichartz admissible conditions, which will be made clear in the following sections. We also adopt the equivalent notation $L^q(dt)L^r(dx)L^s(dy)$, with the implicit assumption that it depends on $T$, in place of $L^q[0, T]L^r(dx)L^s(dy)$.
 
The hyperbolic trigonometric integral operators introduced in \textsection 1 are defined as follows
\begin{align*}
 \sh(k) :=&\ k + \frac{1}{3!}k\circ\bar k \circ k +\frac{1}{5!}k\circ \bar k \circ k\circ \bar k \circ k+\ldots \\
 \ch(k) :=&\ \delta+p(k):=\delta + \frac{1}{2!} \bar k\circ k+\frac{1}{4!}\bar k\circ k \circ\bar k\circ k + \ldots
\end{align*}
where $\circ$ indicates composition of operators. The symmetric kernel of $k$, $k(t, x, y)=k(t, y, x)$, 
is called the pair excitation function. The following are some useful trigonometric identities
\begin{subequations}
\begin{align}
 &\sh(2k)=\ 2\sh(k)\circ\ch(k),\ \ \ch(2k)=\ \delta + 2\overline{\sh(k)}\circ \sh(k) \label{trig-1}\\
 &\ch(k)\circ\ch(k)-\overline{\sh(k)}\circ \sh(k) = \delta. \label{trig-2}
\end{align}
\end{subequations}
Lastly, we use the usual conventional
notation
\begin{align*}
 \rho_{\Gamma}(t, x) :=\Gamma(t, x, x)
\end{align*}
to define the restriction of $\Gamma$ to the diagonal of the plane. 
\begin{remark}\label{HS-remark}
We adopt the usual convention of identifying the collection of 
Hilbert-Schmidt integral operators on $L^2(\rr^d)$, denoted by $\mathcal{L}^2$, with their integral kernels
in $L^2(\rr^d\times \rr^d)$.  
\end{remark}

\noindent\textbf{Main Statement and Structure}. Let us state the main results of the article 
\begin{thm}[Uniform in $N$ Local Well-Posedness of the TDHFB  in $\rr^{1+1}$]\label{main}
Suppose $\beta>0$ and $R>0$. Then there exist $T=T(\beta, R)>0$, $\sigma=\sigma(\beta)$, both independent of $N$, and a corresponding spacetime function space $X_T$, depending only on $T$ and $\sigma$,  such that for any
given 
\begin{align*}
(\varphi_0, \Gamma_0, \Lambda_0) \in \{ (\varphi, \Gamma, \Lambda) \in \mathcal{X}^\sigma\mid \llp{(\varphi, \Gamma, \Lambda)}_{\mathcal{X}^\sigma} < R\},
\end{align*} 
there exists a unique solution to 
the TDHFB equations (\ref{TDHFB})  with initial data $(\varphi_0, \Gamma_0, \Lambda_0)$ satisfying $(\varphi_t, \Gamma_t, \Lambda_t) \in C([0, T]\rightarrow \mathcal{X}^\sigma)\cap X_T$.
\end{thm}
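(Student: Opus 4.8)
The plan is to run a contraction-mapping argument in a suitable Strichartz-type space adapted to the two differential operators $\vect{S}_\pm$ and $\vect{S}+\tfrac1N v_N(x-y)$. First I would fix the scaling parameter $\beta>0$ and choose the regularity exponent $\sigma=\sigma(\beta)$ large enough to absorb all derivative and high-frequency losses coming from the short-range factor $v_N(x)=N^\beta v(N^\beta x)$; the key point here is that $\|v_N\|_{L^1}=\|v\|_{L^1}$ is $N$-independent, while $\|v_N\|_{L^p}\sim N^{\beta(1-1/p)}$ blows up, so every estimate must be organized to place $v_N$ in $L^1$ (or to pay the loss against a fixed positive power of $N$ that is harmless since we only claim uniformity, not convergence). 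I would then write the TDHFB system (\ref{TDHFB}) in Duhamel/integral form, with the scalar equation for $\varphi_t$ solved by the free Schr\"odinger propagator $e^{it\lapl}$ in a Strichartz space on $\rr^{1+1}$, the equation (\ref{nl-gamma}) for $\Gamma_t$ solved by the propagator $e^{it(\lapl_x-\lapl_y)}$ for $\vect{S}_\pm$, and the equation (\ref{nl-lambda}) for $\Lambda_t$ solved by the propagator for $\vect{S}+\tfrac1N v_N(x-y)$ (or, more simply, treating the bounded-in-$L^1$ potential term $\tfrac1N v_N(x-y)\Lambda$ as part of the inhomogeneity and solving with the free propagator for $\vect{S}=\tfrac1i\bd_t-\lapl_x-\lapl_y$).

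The main work is the nonlinear/multilinear estimates. I would define $X_T$ as an intersection of $C([0,T]\to\mathcal{X}^\sigma)$ with mixed spacetime norms $L^q[0,T]L^r(dx)L^s(dy)$ on $\langle\grad_{x,y}\rangle^\sigma$ of the kernels, chosen so that (i) the free solution operators are bounded $\mathcal{X}^\sigma\to X_T$ uniformly in $N$ via Strichartz estimates in $\rr^{1+1}$ together with the energy (fixed-time $L^2$) bound, and (ii) each nonlinear term on the right-hand side of (\ref{TDHFB}) maps $X_T$ (a ball of radius $\sim R$) into itself with a small constant after paying a positive power of $T$. Concretely, one must estimate the convolution/diagonal-restriction terms $(v_N\ast\rho_{\Gamma})\varphi$, $\kappa(\Gamma^\varphi)\varphi$, $\kappa(\Lambda^\varphi)\bar\varphi$ (and their operator-composition analogues in the $\Gamma$ and $\Lambda$ equations), using: the trace/Sobolev control of $\rho_\Gamma=\Gamma(t,x,x)$ by $\langle\grad_{x,y}\rangle^\sigma\Gamma$ in $L^2(\rr^2)$ (a one-dimensional trace theorem, valid for $\sigma>1/2$); Young's inequality to put $v_N$ in $L^1$ against $\rho_\Gamma$; Hölder in the remaining spacetime variables; and the collapsing/Strichartz estimates from \cite{GrillakisMachedon} to handle the bilinear kernel terms like $[\kappa(|\varphi\rangle\langle\varphi|),\Gamma^\varphi]$ and $\{\kappa(\bar\Gamma),\Lambda^\varphi\}$. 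Crucially, each time $v_N$ appears I would route it through its $L^1$ norm, which costs no power of $N$, so that the resulting time $T$ and regularity $\sigma$ depend only on $\beta$ and $R$.

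With these estimates in hand, the standard Picard iteration on the ball $\{u\in X_T:\|u\|_{X_T}\le CR\}$ closes: the map is a contraction for $T$ small depending only on $(\beta,R)$, giving existence; a difference estimate of the same multilinear type gives uniqueness in $X_T$; and propagation of the $\mathcal{X}^\sigma$-regularity in time follows from the same bounds applied to $\langle\grad_{x,y}\rangle^\sigma$ of the solution, yielding $(\varphi_t,\Gamma_t,\Lambda_t)\in C([0,T]\to\mathcal{X}^\sigma)\cap X_T$. The main obstacle I anticipate is the bookkeeping of $N$-powers in the short-range-scaled nonlinearities: one must verify that every singular factor $v_N$ (and every $\tfrac1N v_N$, and the hyperbolic series $\sh(k),\ch(k)$ implicit in the structure of the solutions) can be estimated with a constant that is bounded as $N\to\infty$, which forces the choice of $\sigma(\beta)$ and is the reason the local time is uniform in $N$; the dispersive estimates themselves are by now routine in $\rr^{1+1}$, but making them play well with the $v_N$-scaling — rather than with the mean-field $N^{-1}$ factor alone — is where the real care is needed.
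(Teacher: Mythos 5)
Your overall framework (Duhamel plus contraction mapping in a Strichartz-type space, routing $v_N$ through its $N$-independent $L^1$ norm) matches the paper's, but two of your key mechanisms are wrong in a way that would prevent the argument from closing. First, you propose to control the diagonal restriction $\rho_\Gamma(t,x)=\Gamma(t,x,x)$ by a static trace theorem requiring $\sigma>1/2$, and accordingly to take $\sigma(\beta)$ \emph{large}. The actual constraint runs in the opposite direction: one needs $0<\sigma<\tfrac12$ and $1-\beta\sigma>0$, i.e.\ $\sigma$ must be \emph{small} when $\beta$ is large, precisely because every fractional derivative $\grad^\sigma$ landing on $v_N$ costs a factor $N^{\beta\sigma}$ that must be beaten by the explicit $N^{-1}$ prefactor. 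With $\sigma<1/2$ the trace theorem is unavailable, and the diagonal is instead controlled by the collapsing estimates $\llp{\grad_x^{1/2}\rho_\Gamma}_{L^2(dtdx)}\lesssim\llp{\Gamma_0}_{L^2(dxdy)}$ (and the analogous quarter-power spacetime derivative estimate for $\Lambda(t,x,x)$): a gain of half a derivative on the diagonal coming from the dispersive evolution, not from Sobolev regularity of the data. This is the central estimate of the paper, and it forces $X_T$ to contain the shifted-diagonal norms $\sup_z\llp{\grad_x^{1/2}\Gamma(t,x+z,x)}_{L^2(dtdx)}$ and $\sup_z\llp{\langle\grad_x\rangle^\sigma\Lambda(t,x+z,x)}_{L^4[0,T]L^2(dx)}$; a space built only from Strichartz norms of $\langle\grad_{x,y}\rangle^\sigma$ applied to the kernels would not close.

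Second, your plan to treat $\tfrac1N v_N(x-y)\Lambda$ as a source term for the free propagator does not work in the natural inhomogeneity space $L^1[0,T]L^2(dxdy)$: there the potential must be measured in $L^\infty(d(x-y))$, which costs $N^{\beta}$ and overwhelms the $N^{-1}$ prefactor as soon as $\beta>1$. The paper instead passes through $X^{0,b}$ (Bourgain) norms and the dual estimate $\llp{F}_{X^{-1/2+\delta}}\lesssim T^{c}\llp{F}_{L^2[0,T]L^{1^+}(d(x-y))L^2(d(x+y))}$, which allows $v_N$ to be measured in $L^{1^+}$ at a cost $N^{\beta(1-1/1^+)}$ that the $N^{-1}$ does absorb for $1^+$ close to $1$. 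Relatedly, your remark that paying ``a fixed positive power of $N$'' is harmless is incorrect: uniformity in $N$ means every constant must stay bounded as $N\to\infty$, so each positive power of $N$ has to be cancelled by an explicit negative power arising from the structure of the equations.
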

\begin{remark}
The proof is based on Picard-Lindel\"of theorem or sometimes known as the Banach fixed-point method.
We refer the reader to \textsection \ref{Proof-of-main} for the definition of the function space $X_T$ and Theorem \ref{main thm} for the a-priori estimates involved in the proof of Theorem \ref{main}. 
\end{remark}
\begin{remark}
Given $\beta>0$, we will later see that the choice of $\sigma$ must satisfy the conditions $1-\beta \sigma>0$ and $0<\sigma<\frac{1}{2}$; see Remark \ref{sigma-n} and Remark \ref{beta}.  Informally, this means when $\beta$ is large we can only have uniform control of low Sobolev norms. Ideally, we would like
 to choose $\sigma = 0$, but the nonlinearity requires us to choose $\sigma>0$; see Remark \ref{remark-1}.
Hence an interesting point to observe is the competition between  large $\beta>0$, which requires low regularity of the initial condition, and the non-linearity, which requires some regularity.    
\end{remark}

\begin{remark}
The choice of the Banach space $X_T$ is sufficient, maybe necessary, for our analysis of the TDHFB equations. Heuristically, the space $X_T$  is an intersection of Strichartz spaces, which capture evolution due to the Schr\" odinger-type operators, plus a trace-type space, which captures the interactions coming from the nonlinearity of the coupled equations.
\end{remark}
\begin{cor}[Uniform in $N$ Global Well-Posedness of the TDHFB  in $\rr^{1+1}$]  
Suppose $\beta>0$ and $R>0$. Then for any 
\begin{align*}
(\varphi_0, \Gamma_0, \Lambda_0) \in \{ (\varphi, \Gamma, \Lambda) \in \mathcal{X}^\sigma\mid \llp{(\varphi, \Gamma, \Lambda)}_{\mathcal{X}^\sigma}+\llp{(\grad_x\varphi, \grad_{x, y}\Gamma, \grad_{x, y}\Lambda)}_{\mathcal{X}^\sigma} < R\},
\end{align*} 
the corresponding local solution to  the TDHFB equations (\ref{TDHFB}) given by Theorem \ref{main} extends globally with $(\varphi_t, \Gamma_t, \Lambda_t) \in C([0, \infty)\rightarrow \mathcal{X}^\sigma)\cap X_{\infty, \text{loc}}$ (See \textsection 8 for definition of $X_{\infty, \text{loc}}$). 
\end{cor}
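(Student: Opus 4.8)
The plan is to upgrade the local solution produced by Theorem~\ref{main} to a global one by establishing an a priori bound, on every compact time interval and uniform in $N$, for the norm $\llp{(\varphi_t,\Gamma_t,\Lambda_t)}_{\mathcal{X}^\sigma}$. Since the local existence time $T=T(\beta,R)$ supplied by Theorem~\ref{main} depends only on an upper bound for that norm, and since Theorem~\ref{main} places the local solution in $C([0,T]\rightarrow\mathcal{X}^\sigma)\cap X_T$, such a bound lets one restart the iteration, cover $[0,\infty)$, and patch the pieces into an element of $C([0,\infty)\rightarrow\mathcal{X}^\sigma)\cap X_{\infty,\text{loc}}$. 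First I would record the standard continuation criterion: the local solution extends to a maximal interval $[0,T^\ast)$, and if $T^\ast<\infty$ then $\limsup_{t\to T^\ast}\llp{(\varphi_t,\Gamma_t,\Lambda_t)}_{\mathcal{X}^\sigma}=+\infty$. Thus it suffices to rule out this blow-up.

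The a priori bound comes from the Hamiltonian structure of (\ref{TDHFB}). The flow conserves two functionals: a particle-number type quantity, essentially
\begin{align*}
\mathcal{N}(\varphi_t,\Gamma_t,\Lambda_t)=\llp{\varphi_t}_{L^2(\rr)}^2+\Tr\,\Gamma_t^{\varphi_t},
\end{align*}
which is $\mathcal{O}(1)$ uniformly in $N$ and, since $\Gamma_t^{\varphi_t}\geq0$, controls $\Tr\,\Gamma_t^{\varphi_t}$ from above; and the HFB energy $\mathcal{E}(\varphi_t,\Gamma_t,\Lambda_t)$, built from the kinetic terms $\llp{\grad\varphi_t}_{L^2}^2$, $\Tr\big((-\lapl)\Gamma_t^{\varphi_t}\big)$ and the interaction terms involving $v_N\ast\rho_{\Gamma_t}$, $\kappa(\Gamma_t^{\varphi_t})$, $\kappa(\Lambda_t^{\varphi_t})$ and $\kappa(\varphi_t^{\otimes 2})$. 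From $\mathcal{N}$, together with the algebraic constraints linking $\Lambda_t^{\varphi_t}$ to $\Gamma_t^{\varphi_t}$ that follow from the Bogoliubov identities (\ref{trig-1})--(\ref{trig-2}), one reads off a uniform bound on $\llp{(\varphi_t,\Gamma_t,\Lambda_t)}_{\mathcal{X}^0}$: the Hilbert--Schmidt norms of $\Gamma_t^{\varphi_t}$ and $\Lambda_t^{\varphi_t}$ are controlled by $\Tr\,\Gamma_t^{\varphi_t}$, and the rank-one pieces $|\varphi_t\rangle\langle\varphi_t|$ and $\varphi_t\otimes\varphi_t$ by $\llp{\varphi_t}_{L^2}^2$. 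From $\mathcal{E}$, using $v\geq0$ so that the Hartree contribution $\iint v_N(x-y)\rho_{\Gamma_t}(x)\rho_{\Gamma_t}(y)\,dx\,dy$ is non-negative, and again the Bogoliubov constraints to absorb the indefinite pairing terms, one obtains a uniform bound on $\llp{\grad_x\varphi_t}_{L^2}+\llp{\grad_{x,y}\Gamma_t^{\varphi_t}}_{\mathcal{L}^2}+\llp{\grad_{x,y}\Lambda_t^{\varphi_t}}_{\mathcal{L}^2}$; combined with the $\mathcal{X}^0$ bound this is a uniform bound on $\llp{(\varphi_t,\Gamma_t,\Lambda_t)}_{\mathcal{X}^1}$. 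Since $0<\sigma<\tfrac12<1$ and $s\mapsto\llp{\cdot}_{\mathcal{X}^s}$ is non-decreasing, $\llp{(\varphi_t,\Gamma_t,\Lambda_t)}_{\mathcal{X}^\sigma}\leq\llp{(\varphi_t,\Gamma_t,\Lambda_t)}_{\mathcal{X}^1}$ stays bounded on $[0,T^\ast)$, forcing $T^\ast=\infty$.

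One subtlety must be handled: the conserved energy is finite, and its conservation rigorously justified, only when the solution carries $H^1$-type regularity, which the low-regularity solution of Theorem~\ref{main} need not have. This is precisely why the corollary strengthens the data hypothesis to $(\grad_x\varphi_0,\grad_{x,y}\Gamma_0,\grad_{x,y}\Lambda_0)\in\mathcal{X}^\sigma$, i.e.\ data of regularity roughly $\mathcal{X}^{1+\sigma}$, which in particular has finite $\mathcal{E}$. I would therefore interpose a persistence-of-regularity step: differentiating (\ref{TDHFB}) in $x$ and $y$, the triple $(\grad_x\varphi_t,\grad_{x,y}\Gamma_t,\grad_{x,y}\Lambda_t)$ satisfies a system governed by the same Schr\"odinger-type operators $\vect{S}_\pm$ and $\vect{S}+\tfrac1N v_N(x-y)$, with a multilinear forcing in which one derivative falls on a single factor and the remaining factors are controlled by quantities already estimated in Theorem~\ref{main}. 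Re-running the Strichartz and trace estimates of \textsection \ref{Proof-of-main} and closing with Gronwall's inequality propagates the $\mathcal{X}^{1+\sigma}$ regularity on each compact interval, so that $\mathcal{E}$ is finite and conserved there; patching across successive intervals keeps $\mathcal{E}(t)=\mathcal{E}(0)$ for all $t\geq0$ and yields $(\varphi_t,\Gamma_t,\Lambda_t)\in C([0,\infty)\rightarrow\mathcal{X}^\sigma)\cap X_{\infty,\text{loc}}$.

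I expect the main obstacle to be the coercivity of the HFB energy, uniformly in $N$: the pairing terms involving $\kappa(\Lambda_t^{\varphi_t})$ and $\kappa(\varphi_t^{\otimes 2})$ have indefinite sign, so the non-negativity of $v$ alone does not bound the kinetic energy. One must use the full set of Bogoliubov constraints among $\varphi_t$, $\Gamma_t^{\varphi_t}$ and $\Lambda_t^{\varphi_t}$ — and the $N^{-1}$ gain appearing in the $\Lambda$-equation — to absorb these contributions into the positive kinetic and Hartree terms with constants independent of $N$; this is where the structure inherited from the quasi-free ansatz, rather than soft PDE arguments, does the work. Everything else — the continuation criterion, the monotonicity of the $\mathcal{X}^s$ scale, and the persistence of regularity — is comparatively routine once this a priori bound is secured.
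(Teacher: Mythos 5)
Your proposal follows the paper's own route: the conservation of total particle number and energy (Theorem \ref{conservation}, imported from \cite{GrillakisMachedon2015}) gives a uniform-in-$t$ and uniform-in-$N$ bound on $\llp{(\varphi_t,\Gamma_t,\Lambda_t)}_{\mathcal{X}^s}$ for $s<1$ --- with $\Lambda_t$ controlled through the Bogoliubov identities, exactly as you suggest, via $p\circ p+2p=\overline{\sh(k)}\circ\sh(k)$ and hence $\llp{p(k)}_{L^2}^2\leq\llp{\sh(k)}_{L^2}^2$ --- and the local theory, whose existence time depends only on the $\mathcal{X}^\sigma$ size of the data, is then iterated. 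The one point where you overestimate the difficulty is the coercivity of $\mathcal{E}$: in the form used here the conserved energy is already a sum of manifestly non-negative terms (the pairing contributions sit inside complete squares weighted by $v_N\geq 0$), so no further absorption of indefinite terms is required, and the persistence of $H^1$-type regularity you propose is already built into Theorem \ref{main thm} via the bounds on $\vect{N}_T(\bd_t^i\grad_{x+y}^jX)$.
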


\begin{remark}
To prove the global well-posedness it suffices to prove that the following estimates
\begin{align*}
 &\llp{\langle \grad_x\rangle^{\sigma}\varphi(t, \cdot)}_{L^2(dx)}\lesssim 1\\
 &\llp{\langle \grad_{x, y}\rangle^{\sigma}\Gamma(t, \cdot)}_{L^2(dxdy)} \lesssim 1\\
 &\llp{\langle \grad_{x, y}\rangle^{\sigma}\Lambda(t, \cdot)}_{L^2(dxdy)} \lesssim 1
\end{align*}
hold uniformly in $t$ and $N$, which is a consequence of the conservation 
laws proved in \cite{GrillakisMachedon2015}. See  \textsection \ref{Proof-of-main}.3. 
\end{remark}

\begin{remark}
Our result does not require the condition $V^2\leq C(I-\lapl)$ which is a standard assumption used to treat the multiplicative operator $V$ as a perturbation of the non-interacting case. More precisely, since we are working with $V(x) = N^{\beta-1}v(N^\beta x)$, then we see that
\begin{align*}
N^{\beta-2}\int dx\ |v(x)|^2|f(N^{-\beta}x)|^2=\llp{Vf}_{L^2(\rr)}^2 \lesssim \llp{f'}_{L^2(\rr)}^2 + \llp{f}_{L^2(\rr)}^2
\end{align*}  
can only be true uniformly in $N$ provided $\beta<2$. Nevertheless, in the one dimensional setting, $V$
can still be considered as a perturbation even without the condition.
\end{remark}

Now let us explain a bit the structure of the paper. 
In \textsection \ref{sec-3} and \textsection \ref{sec-4}, we develop estimates that are essential for closing
the iteration scheme of the $\Gamma$ equation. The main results of those two sections necessary for the proof of Theorem \ref{main thm} are Proposition \ref{Inhomog-eq}, Proposition \ref{inhomog-eq2} and Propostion \ref{gamma-stric}. Likewise, from \textsection 6 and \textsection 7, we will need Proposition \ref{trace-energy-lam}, Corollary \ref{main-cor}, Proposition \ref{strichartz3}  and Remark \ref{beta} to close
the estimate for the $\Lambda$ equation. Finally, in \textsection\ref{Proof-of-main} we prove a-priori estimates that are necessary for us to establish the local well-posedness theory for the TDHFB equations then extend the result to a global well-posedness result under further assumption on the initial data.

\section{Estimates for the Homogeneous $\Gamma$ Equation}\label{sec-3}

The main purpose of this section is to prove (\ref{homogeneous-rho-est2}) for the von-Neumann Schr\"odinger equation
\begin{align}\label{vNS-eq}
\frac{1}{i}\frac{\bd}{\bd t}\Gamma +[-\lapl, \Gamma]= 0
\end{align}
for arbitrarily smooth initial condition $\Gamma(0, x, y) = \Gamma_0(x, y)$. The two key ingredients involved in the proof of Corollary \ref{homogeneous-rho-est} are the collapsing estimate and the sharp trace theorem\footnote{Here, sharp trace theorem refers to the statement: for any hyperplane $\Sigma\subset\rr^d$ and $s>1/2$, the trace operator $T:H^{s}(\rr^d)\rightarrow H^{s-1/2}(\Sigma)$ is bounded, i.e. $\llp{Tf}_{H^{s-1/2}(\Sigma)} \lesssim \llp{f}_{H^s(\rr^d)}$.}.

Let us adopt the following convention for our spacetime Fourier transform: the spacetime Fourier transform of a Schwartz function $f \in \mathcal{S}(\rr\times\rr^d)$, denoted by $\widetilde{f}$, is defined to be
 \begin{align}
 \widetilde{f}(\tau, \xi) =\int dtdx\ e^{-i(\tau t+\xi\cdot x)} f(t, x).
 \end{align}
 Likewise, the Fourier transform $\hat f$ of some function $f\in \mathcal{S}(\rr^d)$ is defined by
 \begin{align}
 \hat f (\xi) = \int dx\ e^{-i\xi \cdot x} f(x)
 \end{align}
 with corresponding inversion formula
 \begin{align}
 f(x) = \frac{1}{(2\pi)^d} \int d\xi\ e^{i\xi \cdot x} \hat f(\xi).
 \end{align}
\begin{remark}
The reader should be aware of our attempt to keep track of the values of the fractional derivatives in this section. Keeping a record of these values allows us to show that the mapping used when implementing the fixed-point argument  is indeed a self map.
\end{remark}

Now, using the spacetime Fourier transform, we can establish the following collapsing estimate for the solution to (\ref{vNS-eq}).

\begin{prop}[Collapsing Estimate]\label{collapsing-est}
 Suppose $\Gamma$ is a solution to $\vect{S}_{\pm}\Gamma =0$, then
\begin{align}\label{col-est}
 \llp{\grad_x^{1/2}\rho_\Gamma(t, x)}_{L^2(dtdx)} \lesssim \llp{\Gamma_0}_{L^2(dxdy)}.
\end{align}
\end{prop}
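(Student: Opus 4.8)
The plan is to work entirely on the Fourier side. Since $\Gamma$ solves $\vect{S}_\pm\Gamma=0$ with $\vect{S}_\pm = \frac1i\bd_t - \lapl_x + \lapl_y$, the spacetime Fourier transform is supported on the characteristic set: $\widetilde{\Gamma}(\tau,\xi,\eta) = 2\pi\,\delta(\tau + \xi^2 - \eta^2)\,\widehat{\Gamma_0}(\xi,\eta)$, where I use the convention that the evolution $e^{it(\lapl_x-\lapl_y)}$ carries the initial datum $\Gamma_0$. Next I would compute the restriction to the diagonal: $\rho_\Gamma(t,x) = \Gamma(t,x,x)$, whose spacetime Fourier transform in $(t,x)$ is obtained by integrating out the variable dual to $x-y$. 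Concretely, writing $\zeta$ for the Fourier variable of $x$ in $\rho_\Gamma$, one gets
\begin{align*}
\widetilde{\rho_\Gamma}(\tau,\zeta) = \frac{1}{(2\pi)}\int d\eta\ \widehat{\Gamma_0}(\zeta-\eta,\ -\eta)\ \big[\text{with the constraint }\tau = -(\zeta-\eta)^2 + \eta^2\big],
\end{align*}
i.e. after using the delta to solve for one variable, $\rho_\Gamma$ is (up to constants) a one-dimensional integral of $\widehat{\Gamma_0}$ along the line $\{\tau = \eta^2 - (\zeta-\eta)^2\} = \{\tau = 2\zeta\eta - \zeta^2\}$ in the $(\zeta,\eta)$-plane for fixed $(\tau,\zeta)$.

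The key observation is that this line is parametrized \emph{linearly} in $\eta$ once $\zeta$ is fixed: $\tau = 2\zeta\eta - \zeta^2$, so $\eta = (\tau+\zeta^2)/(2\zeta)$ is a single point, and the Jacobian of the change of variables from $(\zeta,\eta)$ (or from $(\xi,\eta)=(\zeta-\eta,-\eta)$) to $(\tau,\zeta)$ contributes a factor $|2\zeta|^{-1}$. Hence, schematically,
\begin{align*}
\widetilde{\rho_\Gamma}(\tau,\zeta) \sim \frac{1}{|\zeta|}\,\widehat{\Gamma_0}\!\left(\zeta - \tfrac{\tau+\zeta^2}{2\zeta},\ -\tfrac{\tau+\zeta^2}{2\zeta}\right).
\end{align*}
Multiplying by $|\zeta|^{1/2}$ (the $\grad_x^{1/2}$) and taking $L^2(d\tau d\zeta)$, the weight becomes $|\zeta|^{1/2}\cdot|\zeta|^{-1} = |\zeta|^{-1/2}$; but we still have one more $|\zeta|^{-1}$ hidden when we change variables in the $\tau$-integral back to the $\eta$ (or $\xi$) variable: $d\tau = |2\zeta|\,d\eta$. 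These two factors combine so that $\||\zeta|^{1/2}\widetilde{\rho_\Gamma}\|_{L^2(d\tau d\zeta)}^2 \sim \int d\zeta\,d\eta\ |\widehat{\Gamma_0}(\zeta-\eta,-\eta)|^2 = \|\widehat{\Gamma_0}\|_{L^2}^2$, which by Plancherel is $\lesssim \|\Gamma_0\|_{L^2(dxdy)}^2$. That is exactly (\ref{col-est}).

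The main obstacle is bookkeeping the change of variables cleanly near $\zeta = 0$, where the Jacobian $|2\zeta|^{-1}$ blows up — but this is precisely why the half-derivative $|\zeta|^{1/2}$ is the \emph{sharp} gain: it is not enough to kill the singularity pointwise, yet after the second change of variables (from the $\tau$-integration back to an $\eta$-integration, which reintroduces a compensating $|2\zeta|$) the singularity is integrable in the $L^2$ sense, and no more than half a derivative can be afforded. Care is also needed to justify the formal manipulation with $\delta(\tau+\xi^2-\eta^2)$ — the rigorous route is to test against Schwartz functions, or equivalently to write $\rho_\Gamma(t,x) = \frac{1}{(2\pi)^2}\int d\xi\, d\eta\ e^{ix(\xi-\eta)} e^{it(\xi^2-\eta^2)}\,\widehat{\Gamma_0}(\xi,\eta)$ directly and perform the substitution $\tau = \xi^2 - \eta^2$, $\zeta = \xi - \eta$ (whose Jacobian is $|\xi+\eta| = |\text{the other combination}|$, not $|\zeta|$ — one must be careful which linear combination appears, and it is in fact $\xi+\eta$, so I would set $u=\xi-\eta,\ w=\xi+\eta$ and note $\tau = uw$, giving Jacobian $|w|$ and $\zeta = u$, and then the $L^2$ identity falls out after integrating $dw$ against $d\tau = |u|\,dw$ at fixed $u$). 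I would present the argument in these last, fully explicit coordinates to avoid any distributional sleight of hand.
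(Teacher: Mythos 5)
Your argument is correct and is essentially the paper's own proof: both compute the spacetime Fourier transform of the diagonal trace, extract the Jacobian factor $|\xi|^{-1}$ from the delta function on the characteristic line $\tau = 2\xi\eta-\xi^2$, and observe that the half-derivative weight exactly compensates it once the $\tau$-integration is converted back into an $\eta$-integration. The paper merely packages this bookkeeping as a Cauchy--Schwarz against the measure $\delta(\tau+|\xi-\eta|^2-|\eta|^2)\,d\eta$ together with the bound $\sup_{\tau,\xi}|\xi|\int\delta\,d\eta\lesssim 1$, which is the same computation as your explicit $(u,w)$ change of variables.
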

\begin{proof}
Taking the spacetime Fourier transform of $\Gamma$ yields
\begin{align*}
\widetilde{\rho_\Gamma(t, x)} =&\ \int dtdx\ e^{-i\tau t-i\xi\cdot x}\rho_\Gamma(t, x) 
=\ \int dtdxdy\ e^{-i\tau t-i\xi\cdot x}\delta(x-y)\Gamma(t, x, y)\\
=&\ \frac{1}{2\pi}\int d\eta dt\ e^{-i\tau t}\widehat{\Gamma}(t, \xi-\eta, \eta)
=\ \frac{1}{2\pi}\int d\eta dt\ e^{it (-\tau-|\xi-\eta|^2+|\eta|^2)}\widehat{\Gamma_0}(\xi-\eta, \eta)\\
=&\ \frac{1}{2\pi}\int d\eta\ \delta(\tau +|\xi-\eta|^2-|\eta|^2)\widehat{\Gamma_0}(\xi-\eta, \eta)\\
=&\ \frac{1}{4\pi |\xi|}\widehat{\Gamma_0}\left(\frac{\xi^2-\tau}{2\xi}, \frac{\xi^2+\tau}{2\xi}\right).
\end{align*}
Taking the $L_{\tau, \xi}^2(\rr\times\rr)$ norm of $\widetilde{\grad_x^{1/2}\rho_\Gamma}$ and applying Cauchy-Schwarz gives us the estimate
\begin{align*}
\int d\tau d\xi\ |\widetilde{\grad^{1/2}_x\rho_\Gamma(t, x)}(\tau, \xi)|^2 \lesssim \llp{\Gamma_0}_{L^2(dxdy)}^2
\end{align*}
since
\begin{align*}
\sup_{\tau, |\xi|}\int d\eta\ \delta(\tau+|\xi-\eta|^2-|\eta|^2) |\xi|  \lesssim 1.
\end{align*}
\end{proof}

Utilizing the above collapsing estimate, we prove a couple perturbed version 
of the collapsing estimate which will be crucial for our article. 
\begin{lem}
 Suppose $\Gamma$ is a solution to $\vect{S}_\pm \Gamma = 0$. Then for any $\varepsilon>0$ we have the estimate
\begin{align}\label{sharp-trace}
 \llp{\grad^\varepsilon_x\rho_\Gamma(t, x)}_{L^\infty(dt)L^2(dx)} \lesssim 
\llp{\grad^{1/2+\varepsilon}_{x, y}\Gamma_0}_{L^2(dxdy)}.
\end{align}
\end{lem}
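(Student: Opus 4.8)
The plan is to mimic the Fourier-space computation used in the proof of the collapsing estimate (Proposition \ref{collapsing-est}), but instead of integrating the dispersive $\delta$-measure against $L^2$ in $(\tau,\xi)$, I would fix $t$, take $L^2$ in $\xi$, and control the time-frequency using the extra regularity on $\Gamma_0$. Concretely, recall from the proof of Proposition \ref{collapsing-est} that for a solution $\Gamma$ of $\vect{S}_\pm\Gamma=0$,
\begin{align*}
\widehat{\rho_\Gamma}(t,\xi)=\frac{1}{2\pi}\int d\eta\ e^{it(-|\xi-\eta|^2+|\eta|^2)}\widehat{\Gamma_0}(\xi-\eta,\eta).
\end{align*}
Thus $\grad_x^\varepsilon$ brings down a factor $|\xi|^\varepsilon$, and for each fixed $t$ I would estimate $|\xi|^\varepsilon|\widehat{\rho_\Gamma}(t,\xi)|$ by Cauchy--Schwarz in $\eta$: split $|\xi|^\varepsilon \le C(|\xi-\eta|^\varepsilon+|\eta|^\varepsilon)$ and absorb $|\xi-\eta|^{1/2+\varepsilon}$ or $|\eta|^{1/2+\varepsilon}$ onto $\widehat{\Gamma_0}$, leaving a weight that must be square-integrable in $\eta$ uniformly in $t$ and $\xi$.

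First I would write, with the substitution $\mu=|\xi-\eta|^2-|\eta|^2$ (a linear change of variables in the one-dimensional $\eta$ for fixed $\xi$, since $d\leq 1$ here means $\eta\in\rr$), that the pointwise-in-$t$ bound reduces to showing
\begin{align*}
\sup_{t,\xi}\ |\xi|^{2\varepsilon}\int d\eta\ \frac{1}{\langle\xi-\eta\rangle^{1+2\varepsilon}+\langle\eta\rangle^{1+2\varepsilon}}<\infty
\end{align*}
after pulling out the $e^{it(\cdot)}$ phase (which has modulus one) and using $|ab|\le \tfrac12(|a|^2+|b|^2)$ on the product $\widehat{\Gamma_0}(\xi-\eta,\eta)$ against the weight. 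Actually it is cleaner to keep the phase and argue: $\|\grad_x^\varepsilon\rho_\Gamma(t,\cdot)\|_{L^2(dx)}=\|\,|\xi|^\varepsilon\widehat{\rho_\Gamma}(t,\xi)\|_{L^2(d\xi)}$, and by Cauchy--Schwarz in $\eta$,
\begin{align*}
|\xi|^{2\varepsilon}|\widehat{\rho_\Gamma}(t,\xi)|^2\lesssim \Big(\int d\eta\ \frac{|\xi|^{2\varepsilon}}{\langle\xi-\eta\rangle^{1+2\varepsilon}\langle\eta\rangle^{1+2\varepsilon}}\Big)\Big(\int d\eta\ \langle\xi-\eta\rangle^{1+2\varepsilon}\langle\eta\rangle^{1+2\varepsilon}|\widehat{\Gamma_0}(\xi-\eta,\eta)|^2\Big),
\end{align*}
then integrate in $\xi$ and use that the first factor is bounded uniformly in $\xi$ (this is where $\varepsilon>0$ is needed, so the exponent $1+2\varepsilon$ exceeds $1$ and the $\eta$-integral converges; the factor $|\xi|^{2\varepsilon}$ is controlled by $\langle\xi-\eta\rangle^{2\varepsilon}\langle\eta\rangle^{2\varepsilon}$ via the triangle inequality). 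After integrating the second factor in $\xi$ and changing variables $(\xi-\eta,\eta)\mapsto(x,y)$-frequencies, this is exactly $\|\grad_{x,y}^{1/2+\varepsilon}\Gamma_0\|_{L^2(dxdy)}^2$ up to constants, since $\langle\xi-\eta\rangle^{1+2\varepsilon}\langle\eta\rangle^{1+2\varepsilon}\lesssim (\langle\xi-\eta\rangle^2+\langle\eta\rangle^2)^{1/2+2\varepsilon}$ and one adjusts $\varepsilon$ (or notes the norms are comparable up to the stated order of derivatives).

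The main obstacle is bookkeeping the weights so that the uniform-in-$(t,\xi)$ supremum integral genuinely converges and the leftover lands on $\langle\grad_{x,y}\rangle^{1/2+\varepsilon}\Gamma_0$ and not a higher power; the role of $t$ is harmless since the only $t$-dependence is the unimodular phase $e^{it(|\xi-\eta|^2-|\eta|^2)}$, which drops out under the absolute value before Cauchy--Schwarz, giving the $L^\infty(dt)$ norm for free. A minor point to check is that $\varepsilon$ may need to be taken slightly larger than the "gain" on the right so that $\langle\xi-\eta\rangle^{1+2\varepsilon}\langle\eta\rangle^{1+2\varepsilon}$ is dominated by $\langle\grad_{x,y}\rangle^{1+2\varepsilon}$; this is absorbed into the $\lesssim$ and the freedom in choosing $\varepsilon$, consistent with how the estimate is invoked later. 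I expect no serious analytic difficulty beyond this weighted Cauchy--Schwarz, exactly in the spirit of Proposition \ref{collapsing-est}.
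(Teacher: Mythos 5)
Your overall strategy is sound and is in fact a from-scratch Fourier-side proof of the two ingredients the paper simply cites: the sharp trace theorem onto the diagonal and the conservation of the $H^{1/2+\varepsilon}$ norm under the free flow (the unimodular phase $e^{it(|\eta|^2-|\xi-\eta|^2)}$, which you correctly observe makes the $L^\infty(dt)$ bound free). However, there is a genuine gap in the execution: the Cauchy--Schwarz weight you choose is the \emph{product} weight $\langle\xi-\eta\rangle^{1+2\varepsilon}\langle\eta\rangle^{1+2\varepsilon}$, and the domination you then assert, namely $\langle\xi-\eta\rangle^{1+2\varepsilon}\langle\eta\rangle^{1+2\varepsilon}\lesssim(\langle\xi-\eta\rangle^2+\langle\eta\rangle^2)^{1/2+2\varepsilon}$, is false: on the ray $\xi-\eta=\eta$ the left side grows like $\langle\eta\rangle^{2+4\varepsilon}$ while the right side grows like $\langle\eta\rangle^{1+4\varepsilon}$. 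The best one can say is $(\langle\zeta\rangle\langle\eta\rangle)^{1+2\varepsilon}\le(\langle\zeta\rangle^2+\langle\eta\rangle^2)^{1+2\varepsilon}$, so your argument as written only bounds $\llp{\grad_x^\varepsilon\rho_\Gamma(t,\cdot)}{}_{L^2(dx)}$ by $\llp{\grad_{x,y}^{1+2\varepsilon}\Gamma_0}{}_{L^2(dxdy)}$, losing half a derivative. This is not repaired by "adjusting $\varepsilon$": the discrepancy is a fixed factor of two in the order of derivatives, and it would propagate into the interpolation exponent $\alpha$ of Proposition 3.4 and hence into the value of $\sigma$ used throughout the paper.

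The fix is to run the same Cauchy--Schwarz with the \emph{isotropic} weight $(\langle\xi-\eta\rangle^2+\langle\eta\rangle^2)^{1/2+\varepsilon}$, i.e.
\begin{align*}
|\widehat{\rho_\Gamma}(t,\xi)|^2\lesssim\left(\int\frac{d\eta}{(\langle\xi-\eta\rangle^2+\langle\eta\rangle^2)^{1/2+\varepsilon}}\right)\left(\int d\eta\ (\langle\xi-\eta\rangle^2+\langle\eta\rangle^2)^{1/2+\varepsilon}|\widehat{\Gamma_0}(\xi-\eta,\eta)|^2\right),
\end{align*}
whose second factor integrates in $\xi$ exactly to $\llp{\langle\grad_{x,y}\rangle^{1/2+\varepsilon}\Gamma_0}{}^2_{L^2(dxdy)}$ with no loss. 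For the first factor, complete the square, $|\xi-\eta|^2+|\eta|^2=\tfrac12|\xi|^2+2|\eta-\xi/2|^2$, so that
\begin{align*}
\int\frac{d\eta}{(\langle\xi-\eta\rangle^2+\langle\eta\rangle^2)^{1/2+\varepsilon}}\lesssim_\varepsilon\langle\xi\rangle^{-2\varepsilon},
\end{align*}
where $\varepsilon>0$ is needed for convergence of the $\eta$-integral; multiplying by $|\xi|^{2\varepsilon}$ then gives a bound uniform in $\xi$ and $t$. With this single change your argument closes and reproduces the lemma; it is a legitimate self-contained alternative to the paper's two-line appeal to the sharp trace theorem plus norm conservation, but as written the weight choice breaks the final step.
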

\begin{proof}
 For any fixed $t$, it follows from the sharp trace theorem and the conservation of mass we have that
\begin{align*}
 \llp{\grad^\varepsilon_x\rho_\Gamma(t, x)}_{L^2(dx)} \lesssim 
\llp{\grad^{1/2+\varepsilon}_{x, y}\Gamma_0}_{L^2(dxdy)}. 
\end{align*}
\end{proof}

\begin{prop}
  Suppose $\Gamma$ is a solution to $\vect{S}_\pm \Gamma = 0$. Then for any $0<\varepsilon<\frac{1}{2}$ and   $0<\varepsilon'<\frac{1}{2}-\varepsilon$ there exist $q=q(\varepsilon)$ and $\alpha = \alpha(\varepsilon)$ such that the following estimate holds
\begin{align}
 \llp{\grad^{\frac{1}{2}-\varepsilon'}_x\rho_\Gamma(t, x)}_{L^q(dt)L^2(dx)} \lesssim 
\llp{\grad^{\alpha}_{x, y}\Gamma_0}_{L^2(dxdy)}.
\end{align}
\end{prop}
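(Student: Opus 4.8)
The plan is to deduce the estimate by complex interpolation of the two endpoint bounds proved above for the homogeneous von-Neumann Schr\"odinger equation. Let $T$ denote the linear map sending $\Gamma_0$ to $\rho_\Gamma$, where $\Gamma$ solves $\vect{S}_\pm\Gamma=0$ with $\Gamma|_{t=0}=\Gamma_0$. Proposition \ref{collapsing-est}, i.e. estimate (\ref{col-est}), says that $T:L^2(dxdy)\to L^2(dt)\dot H^{1/2}(dx)$ is bounded; estimate (\ref{sharp-trace}), for the fixed $\varepsilon$ in the hypothesis, says that $T:\dot H^{1/2+\varepsilon}(dxdy)\to L^\infty(dt)\dot H^{\varepsilon}(dx)$ is bounded, the homogeneous norms being exactly those induced by $\grad_{x,y}^{1/2+\varepsilon}$, $\grad_x^{1/2}$ and $\grad_x^{\varepsilon}$ as written there. (One may use $\langle\grad\rangle$ in place of $\grad$ everywhere to sidestep completeness issues for homogeneous spaces; this changes nothing below.)

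First I would interpolate $T$ between these two pairs. For $\theta\in(0,1)$ the complex method gives $[L^2,\dot H^{1/2+\varepsilon}]_\theta=\dot H^{\theta(1/2+\varepsilon)}$ on the data side and, for the mixed Lebesgue--Bessel-potential spaces on the target side,
\[
\big[L^2(dt)\dot H^{1/2}(dx),\,L^\infty(dt)\dot H^{\varepsilon}(dx)\big]_\theta = L^q(dt)\dot H^{s}(dx),\qquad \tfrac1q=\tfrac{1-\theta}{2},\quad s=\tfrac{1-\theta}{2}+\theta\varepsilon,
\]
using the standard behaviour of the complex method on $L^p$-valued Bessel-potential spaces, including the $L^\infty$ endpoint in the time variable. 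Hence $T:\dot H^{\alpha}(dxdy)\to L^q(dt)\dot H^{1/2-\varepsilon'}(dx)$ is bounded as soon as $s=\tfrac12-\varepsilon'$, which forces
\[
\theta=\frac{\varepsilon'}{\tfrac12-\varepsilon},\qquad q=q(\varepsilon,\varepsilon')=\frac{2}{1-\theta}=\frac{1-2\varepsilon}{\tfrac12-\varepsilon-\varepsilon'},\qquad \alpha=\alpha(\varepsilon,\varepsilon')=\theta\Big(\tfrac12+\varepsilon\Big)=\frac{\varepsilon'(1+2\varepsilon)}{1-2\varepsilon}.
\]
The hypotheses $0<\varepsilon<\tfrac12$ and $0<\varepsilon'<\tfrac12-\varepsilon$ are exactly what is needed to have $\theta\in(0,1)$, hence $q\in(2,\infty)$ and $\alpha\in(0,\tfrac12+\varepsilon)$, so the interpolation is legitimate; unravelling the definitions gives $\llp{\grad_x^{1/2-\varepsilon'}\rho_\Gamma}_{L^q(dt)L^2(dx)}=\llp{\rho_\Gamma}_{L^q(dt)\dot H^{1/2-\varepsilon'}(dx)}\lesssim \llp{\grad_{x,y}^{\alpha}\Gamma_0}_{L^2(dxdy)}$, which is the claim. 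As a sanity check, $\varepsilon'\to0^+$ degenerates to Proposition \ref{collapsing-est} and $\varepsilon'\to(\tfrac12-\varepsilon)^-$ to (\ref{sharp-trace}).

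The one point that needs genuine care is the complex-interpolation identity for the mixed norm spaces $L^q(dt)\dot H^{s}(dx)$ with an $L^\infty$-in-time endpoint; this is classical (Bergh--L\"ofstr\"om) but should be cited precisely, and it is reassuring that $L^\infty$ only ever appears as an interpolation endpoint, never as an output space. If one prefers to avoid abstract interpolation, the bound also follows directly from the explicit formula $\widetilde{\rho_\Gamma}(\tau,\xi)=\tfrac{1}{4\pi|\xi|}\widehat{\Gamma_0}(\tfrac{\xi^2-\tau}{2\xi},\tfrac{\xi^2+\tau}{2\xi})$ obtained in the proof of Proposition \ref{collapsing-est}: after Plancherel in $x$, use Minkowski's inequality to pass from $L^q(dt)L^2(d\xi)$ to $L^2(d\xi)L^q(dt)$ (valid since $q\ge2$), apply Hausdorff--Young in $t$ fiberwise in $\xi$ (the $t$-integral being, after rescaling, a one-dimensional Fourier integral in the internal frequency $\eta$), and close with H\"older in $\eta$ against the weight $\langle(\xi-\eta,\eta)\rangle^{-\alpha}$, which lies in $L^r(d\eta)$ once $\alpha r>1$; a short accounting of exponents reproduces an estimate of the asserted form with $q=\tfrac{2}{1-2\varepsilon'}$ and any $\alpha>\varepsilon'$. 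I expect that exponent bookkeeping, rather than any analytic difficulty, to be the only place requiring attention.
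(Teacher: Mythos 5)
Your main argument is exactly the paper's proof: interpolate the collapsing estimate (\ref{col-est}) with the trace-type bound (\ref{sharp-trace}) at parameter $\theta=\varepsilon'/(\tfrac12-\varepsilon)$, and your resulting exponents $q=\frac{1-2\varepsilon}{\frac12-\varepsilon-\varepsilon'}$ and $\alpha=\frac{(1+2\varepsilon)\varepsilon'}{1-2\varepsilon}=\big(\frac{\frac12+\varepsilon}{\frac12-\varepsilon}\big)\varepsilon'$ agree with those in the paper. The extra care you take with the mixed-norm complex interpolation (and the alternative direct Fourier-side argument) goes beyond what the paper records but does not change the route.
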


\begin{proof}
 Interpolating\footnote{c.f. Chapter V \textsection 4 in \cite{Stein}. } estimates  (\ref{col-est}) and (\ref{sharp-trace}), we obtain the estimate 
\begin{align*}
 \llp{\grad^{\frac{1}{2}-\varepsilon'}_x\rho_\Gamma}_{L^q(dt)L^2(dx)} \lesssim 
\llp{\grad_{x,y}^\alpha\Gamma_0}_{L^2(dxdy)}
\end{align*}
 with $\alpha$  given by
\begin{align*}
 \alpha = \left(\frac{\frac{1}{2}+\varepsilon}{\frac{1}{2}-\varepsilon}\right)\varepsilon'.
\end{align*}
Moreover, checking the arithmetic, we see that
\begin{align*}
 q = \frac{1-2\varepsilon}{\frac{1}{2}-\varepsilon'-\varepsilon}\geq 2
\end{align*}
since $\varepsilon'<\frac{1}{2}-\varepsilon$. 
\end{proof}

\begin{cor}\label{homogeneous-rho-est}
Suppose $\Gamma$ is a solution to $\vect{S}_\pm \Gamma = 0$. Then for any $0<\varepsilon<1/2$ there exists $q = q(\varepsilon)$ such that the following estimate holds
\begin{align}\label{homogeneous-rho-est2}
 \llp{\grad^{\frac{1}{2}-\varepsilon}_x\rho_\Gamma(t, x)}_{L^q(dt)L^2(dx)} \lesssim 
\llp{\grad^{(\frac{3}{2}-\varepsilon)\varepsilon}_{x, y}\Gamma_0}_{L^2(dxdy)}.
\end{align}
\end{cor}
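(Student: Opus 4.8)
This corollary is a bookkeeping specialization: it follows immediately from the preceding Proposition by a single admissible choice of its two free parameters, and, unwinding that Proposition, directly from interpolating the collapsing estimate (\ref{col-est}) against the trace-type bound (\ref{sharp-trace}). I would present the direct interpolation route, since it keeps the exponent accounting transparent. Both inputs are bounds for a solution of $\vect{S}_\pm\Gamma=0$: the $L^2_{t,x}$ collapsing estimate (\ref{col-est}) — time-integrability exponent $2$, spatial gain $\tfrac12$ on $\rho_\Gamma$, cost $0$ derivatives on $\Gamma_0$ — and the $L^\infty_t L^2_x$ estimate (\ref{sharp-trace}), valid for every $\varepsilon>0$ — time-integrability exponent $\infty$, spatial gain $\varepsilon$ on $\rho_\Gamma$, cost $\tfrac12+\varepsilon$ on $\Gamma_0$. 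Both carry constants independent of $N$, since the homogeneous von Neumann--Schr\"odinger flow does not see the potential.

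Next I would fix the parameter of the statement (call it $e$, $0<e<\tfrac12$, to avoid clashing with the $\varepsilon$ appearing in (\ref{sharp-trace})) and interpolate the two displays with weight $\theta\in(0,1)$ on (\ref{col-est}) and weight $1-\theta$ on (\ref{sharp-trace}), leaving the sharp-trace parameter $\varepsilon$ free. By the Stein--Weiss / complex interpolation argument already used above, this produces for each admissible pair $(\theta,\varepsilon)$ an estimate
\begin{align*}
\llp{\grad^{\,\theta/2+(1-\theta)\varepsilon}_x\rho_\Gamma}_{L^{2/\theta}(dt)L^2(dx)}\lesssim \llp{\grad^{(1-\theta)(\frac12+\varepsilon)}_{x,y}\Gamma_0}_{L^2(dxdy)}.
\end{align*}
Requiring the spatial gain on the left to equal $\tfrac12-e$ gives one linear relation between $\theta$ and $\varepsilon$; I would then pick the member of this one-parameter family for which the cost on $\Gamma_0$ equals $(\tfrac32-e)e$, namely $\varepsilon=\dfrac{\tfrac12-e}{5-2e}$ and $\theta=1-\tfrac52e+e^2$, and set $q:=2/\theta$. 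A short computation (the same flavour as the "checking the arithmetic" step in the previous proof) then verifies the spatial exponent is $\tfrac12-e$ and the $\Gamma_0$ exponent is $(\tfrac32-e)e$, i.e.\ exactly (\ref{homogeneous-rho-est2}).

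\textbf{Main obstacle.} There is essentially no analytic content here beyond the collapsing estimate and the trace lemma already established; the only thing to watch is that the chosen interpolation parameters remain admissible over the entire range $0<e<\tfrac12$. Concretely: $\varepsilon=\tfrac{1/2-e}{5-2e}$ lies in $(0,\tfrac1{10})\subset(0,\tfrac12)$, $\theta=1-\tfrac52e+e^2$ lies in $(0,1)$ (it decreases from $1$ to $0$ as $e$ runs over $(0,\tfrac12)$), and $q=2/\theta$ is therefore finite and $\ge 2$, so that it is a legitimate Strichartz-type Lebesgue exponent $q=q(e)$. If one instead invokes the preceding Proposition directly, the corresponding check is that, taking $\varepsilon'=e$ there and its parameter equal to $\tfrac{1-2e}{10-4e}$, the constraint $\varepsilon'<\tfrac12-\varepsilon$ reduces to $(2e-1)(e-2)>0$, which holds for all $0<e<\tfrac12$. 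Either way the corollary follows with no genuine difficulty, which is why it is stated as a corollary rather than a theorem.
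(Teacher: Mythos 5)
Your proposal is correct and follows essentially the same route as the paper: the paper proves this corollary by invoking Proposition 3.4 with $\varepsilon'=\varepsilon$ and the auxiliary parameter $\delta=\frac{1-2\varepsilon}{2(5-2\varepsilon)}$, which is exactly your $\frac{1/2-e}{5-2e}$, and your "direct interpolation" version is just that proposition unwound one level; your computed $q=2/\theta=\frac{2}{(2-e)(\frac12-e)}$ and the admissibility checks match the paper's. No issues.
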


\begin{proof}
Fix $\varepsilon$. Choose $\delta$ to be
\begin{align*}
   \delta = \frac{1-2\varepsilon}{2(5-2\varepsilon)}  \ \ \implies \ \ \frac{\frac{1}{2}+\delta}{\frac{1}{2}-\delta} = \frac{3}{2}-\varepsilon.
\end{align*}
To avoid confusion, the reader should note that $\varepsilon$ and $\delta$ here correspond to $\varepsilon'$ and $\varepsilon$ in Proposition 3.4.  Hence by the previous proposition, there exists $q(\varepsilon)$ given by
\begin{align*}
 q = \frac{2}{(2-\varepsilon)(\frac{1}{2}-\varepsilon)}
\end{align*}
such that estimate (\ref{homogeneous-rho-est2}) holds. 
\end{proof}

\begin{remark}\label{sigma-n}
For convenience, we shall henceforth denote the quantity $(\frac{3}{2}-\varepsilon)\varepsilon$ by $\sigma$.
\end{remark}

\begin{remark}\label{remark-1}
  Heuristically, we want the estimate
\begin{align*}
 \llp{\rho_\Gamma(t,x)}_{L^2(dt)L^\infty(dx)}
\lesssim \llp{\nabla^{1/2}_x\rho_\Gamma(t, x)}_{L^2(dtdx)}\lesssim 
\llp{\Gamma_0}_{L^2(dxdy)}
\end{align*}
but the estimate is a false endpoint of the Gagliardo-Nirenberg estimate. However, by using
the above corollary and the fact that we are working on a finite interval $[0, T]$, we get
that
\begin{align*}
\llp{\rho_\Gamma(t, x)}_{L^2(dt)L^p(dx)}\lesssim&\ \llp{\grad^{\frac{1}{2}-\varepsilon}_x
\rho_\Gamma(t, x)}_{L^2(dtdx)}\\
\lesssim&\ 
T^\text{some power}\llp{\grad^{\frac{1}{2}-\varepsilon}_x\rho_\Gamma(t, x)}_{L^q(dt)L^2(dx)}\\
\lesssim&\ T^\text{some power}\llp{\grad^{\sigma}_{x,y}
\Gamma_0}_{L^2(dxdy)}.
\end{align*}
We will elaborate more on this point in the next section. 
\end{remark}

Next, let us establish the homogeneous Strichartz estimate for the linear operator $\vect{S}_\pm$. 
\begin{prop}[Non-Endpoint Strichartz] \label{strichartz1}
Suppose $\Gamma$ is a solution to $\vect{S}_\pm\Gamma = 0$ with initial condition $\Gamma_0$ and $(k, \ell)$ is an admissible pair, i.e.
\begin{align}\label{ad-pair}
\frac{2}{k}+\frac{1}{\ell} = \frac{1}{2}
\end{align}
where $(k, \ell) \in (2, \infty]\times [2, \infty]$. Then it follows
\begin{align}
\llp{e^{it(\lapl_x-\lapl_y)}\Gamma_0}_{L^k(dt)L^\ell(dx)L^2(dy)}\lesssim \llp{\Gamma_0}_{L^2(dxdy)}.
\end{align}
\end{prop}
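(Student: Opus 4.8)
The plan is to reduce the estimate for the two-particle propagator $e^{it(\lapl_x - \lapl_y)}$ acting on $\Gamma_0(x,y)$ to the classical (scalar) Strichartz estimate in one spatial variable, using the fact that $e^{it(\lapl_x - \lapl_y)} = e^{it\lapl_x}\otimes e^{-it\lapl_y}$ commutes as a tensor product and that the second factor is a one-parameter family of unitary operators on $L^2(dy)$. Concretely, write $U(t) = e^{it\lapl_x}$ for the free Schr\"odinger group in the $x$ variable and $V(t) = e^{-it\lapl_y}$ for the (complex-conjugated) group in the $y$ variable, so that $e^{it(\lapl_x-\lapl_y)}\Gamma_0 = U(t)V(t)\Gamma_0$. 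The first step is to fix $t$ and observe that, since $V(t)$ is unitary on $L^2(dy)$,
\begin{align*}
\llp{U(t)V(t)\Gamma_0(\cdot,\cdot)}_{L^\ell(dx)L^2(dy)} = \llp{\, \llp{U(t)V(t)\Gamma_0(\cdot, y)}_{L^2(dy)} \,}_{L^\ell(dx)},
\end{align*}
and then to estimate the inner $L^2(dy)$ norm for fixed $x$. Here one must be slightly careful about the order of the mixed norm versus the order in which $U(t)$ and $V(t)$ act; since $U(t)$ acts only in $x$ and $V(t)$ only in $y$, they commute and the Fubini-type manipulations are justified for Schwartz $\Gamma_0$, with the general case following by density.

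The key step is then the following: by Minkowski's integral inequality (valid because $\ell \geq 2$ and $k \geq 2$, so we are moving an $L^2$ norm inside larger exponents), one can bound
\begin{align*}
\llp{e^{it(\lapl_x-\lapl_y)}\Gamma_0}_{L^k(dt)L^\ell(dx)L^2(dy)} \leq \llp{\, \llp{e^{it\lapl_x}\big(e^{-it\lapl_y}\Gamma_0\big)}_{L^k(dt)L^\ell(dx)} \,}_{L^2(dy)}.
\end{align*}
For fixed $y$ the inner double norm is exactly a scalar Strichartz norm of the function $x \mapsto (e^{-it\lapl_y}\Gamma_0)(x,y)$ evolved by the one-dimensional free Schr\"odinger flow in $x$; the non-endpoint Strichartz inequality (Strichartz--Ginibre--Velo, with the admissibility condition $\tfrac{2}{k} + \tfrac{1}{\ell} = \tfrac12$ in dimension $d=1$, excluding the endpoint $(k,\ell)=(4,\infty)$ which is why we require $k > 2$ strictly and $(k,\ell)\in(2,\infty]\times[2,\infty]$) gives
\begin{align*}
\llp{e^{it\lapl_x}\big(e^{-it\lapl_y}\Gamma_0\big)(\cdot,y)}_{L^k(dt)L^\ell(dx)} \lesssim \llp{\big(e^{-it\lapl_y}\Gamma_0\big)(\cdot,y)\big|_{t=0}}_{L^2(dx)} = \llp{\Gamma_0(\cdot,y)}_{L^2(dx)},
\end{align*}
where in the last equality we used that $e^{-it\lapl_y}$ at $t=0$ is the identity (more precisely, one applies the scalar estimate to the initial datum $\Gamma_0(\cdot,y)$ for each fixed $y$, since the $x$-flow does not see $y$ at all and the $y$-flow is irrelevant once we are inside the $x$-Strichartz norm evaluated via its initial datum). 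Taking the $L^2(dy)$ norm of both sides and using $\llp{\Gamma_0}_{L^2(dxdy)} = \llp{\,\llp{\Gamma_0(\cdot,y)}_{L^2(dx)}\,}_{L^2(dy)}$ closes the argument.

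I do not expect a serious obstacle here — the statement is essentially the scalar 1D Strichartz estimate tensored up — but the one point requiring genuine care is the justification of Minkowski's inequality and the interchange of the $L^k(dt)$, $L^\ell(dx)$, $L^2(dy)$ norms: this is what forces the hypothesis $\ell \geq 2$ (so that $L^2(dy)$ can be pulled outside $L^\ell(dx)$) and $k \geq 2$ (so it can be pulled outside $L^k(dt)$), and it is the reason the $y$ variable stays in $L^2$ throughout rather than appearing in a general $L^s$. A cleaner alternative, if one prefers to avoid Minkowski, is to apply the abstract $TT^*$ Keel--Tao machinery directly to the operator $U(t)V(t)$ viewed as a map into $L^\ell(dx)L^2(dy)$: the dispersive estimate $\llp{U(t)V(t) f}_{L^\infty(dx)L^2(dy)} \lesssim |t|^{-1/2}\llp{f}_{L^1(dx)L^2(dy)}$ follows from the 1D dispersive decay of $U(t)$ in $x$ together with the unitarity of $V(t)$ in $y$, and the energy estimate $\llp{U(t)V(t)f}_{L^2(dxdy)} = \llp{f}_{L^2(dxdy)}$ is immediate; the non-endpoint Strichartz inequality then follows from the standard interpolation argument, again with the endpoint excluded in dimension one.
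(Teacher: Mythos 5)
Your primary (Minkowski) route contains a genuine gap. After pulling the $L^2(dy)$ norm to the outside you are left, for each fixed $y$, with the quantity $\llp{e^{it\lapl_x}\big(e^{-it\lapl_y}\Gamma_0\big)(\cdot,y)}_{L^k(dt)L^\ell(dx)}$, which you then bound by the scalar Strichartz inequality as though the datum were fixed. But the ``datum'' $\big(e^{-it\lapl_y}\Gamma_0\big)(\cdot,y)$ depends on $t$: the $y$-evolution mixes the $y$-slices, so on a fixed slice you have a genuinely $t$-dependent family and the homogeneous Strichartz estimate does not apply. In fact the claimed fixed-$y$ inequality is false. Take the admissible pair $(k,\ell)=(\infty,2)$ and $\Gamma_0(x,y)=f(x)g(y)$; then by unitarity of $e^{it\lapl_x}$ on $L^2(dx)$ the left-hand side equals $\llp{f}_{L^2(dx)}\sup_t|\big(e^{-it\lapl}g\big)(y)|$, while the right-hand side is $\llp{f}_{L^2(dx)}\,|g(y)|$, and one may choose $g$ vanishing at the given $y$ with $\big(e^{-it_0\lapl}g\big)(y)\neq 0$ for some $t_0$. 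Only after integrating in $y$ does unitarity of $e^{-it\lapl_y}$ restore control, so the $L^2(dy)$ norm must be exploited \emph{before}, not after, any reduction to fixed $y$.

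The repair is exactly your ``cleaner alternative,'' which is also the paper's (one-line) proof. Since $e^{-it\lapl_y}$ acts only in $y$ and is unitary on $L^2(\rr)$, for each fixed $(t,x)$ one has $\llp{\big(e^{it(\lapl_x-\lapl_y)}\Gamma_0\big)(x,\cdot)}_{L^2(dy)}=\llp{\big(e^{it\lapl_x}\Gamma_0\big)(x,\cdot)}_{L^2(dy)}$, so the whole estimate reduces to an $L^2(dy)$-valued Strichartz estimate for $e^{it\lapl_x}$ alone; this follows from the $TT^\ast$ scheme run with the Hilbert-space-valued dispersive bound $\llp{e^{it\lapl_x}f}_{L^\infty(dx)L^2(dy)}\lesssim |t|^{-1/2}\llp{f}_{L^1(dx)L^2(dy)}$ together with the energy identity, the interpolation argument being insensitive to the vector-valued target. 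This is what the paper means by ``essentially the same as the standard non-endpoint Strichartz estimate using the $TT^\ast$ principle''; the Christ--Kiselev lemma is only needed for the inhomogeneous (retarded) version in Proposition \ref{gamma-stric}. Note that on this route Minkowski's inequality is never invoked, so no constraint beyond admissibility is used. Had you elaborated this second route instead of the first, the proof would be complete and would coincide with the paper's.
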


\begin{proof}
The proof is essentially the same as the standard non-endpoint Strichartz estimate using both the $TT^\ast$ principle and Christ-Kiselev lemma. See \textsection 2.3 in \cite{Tao}.  
\end{proof}

\section{Estimates for the Inhomogeneous $\Gamma$ Equation}\label{sec-4}
Let us now consider the inhomogeneous $\Gamma$ equation 
\begin{align}\label{gamma-f}
\vect{S}_\pm \Gamma = F
\end{align} 
where $F$ is smooth. The main purpose of this section is to obtain collapsing estimates similar to estimates proven in
Proposition \ref{collapsing-est} and Corollary \ref{homogeneous-rho-est} but for that inhomogeneous equation. The main results of this section are Proposition \ref{Inhomog-eq} and Proposition \ref{inhomog-eq2}.
\begin{remark}
For the purpose of obtaining estimates for (\ref{gamma-f}), we do not need to assume $F$ to have any symmetry. That being said, in order for our iteration scheme to preserve the symmetry $\Gamma(x, y) = \overline{\Gamma(y, x)}$, i.e. stay in the designated space which we have specified in Theorem \ref{main}, it is wise to assume $F$ is skewed symmetric, i.e. $\overline{F(x, y)} =-F(y, x)$. Likewise, the forcing term with respect to the $\Lambda$ equation should also satisfy $F(x, y) = F(y, x)$.  Henceforth, we assume the forcing $F$ for each of the three equations has the correct symmetry. 
\end{remark}

Observe the solution to the inhomogeneous equation can be written as
\begin{align}
\Gamma(t, x, y) = e^{it(\lapl_x-\lapl_y)}\Gamma_0(x, y)+ i\int^t_0 e^{i(t-s)(\lapl_x-\lapl_y)}F(s, x, y)\ ds
\end{align}
which then yields
\begin{align}\label{rho-eq}
\rho_\Gamma(t, x) = [e^{it(\lapl_x-\lapl_y)}\Gamma_0](x, x)+ i\int^t_0 [e^{i(t-s)(\lapl_x-\lapl_y)}F](s, x, x)\ ds
\end{align}
Then it follows from the estimate (\ref{col-est}) that
\begin{align*}
 \llp{\grad^{1/2}_x\rho_\Gamma}_{L^2(dtdx)}
\lesssim&\ \llp{\Gamma_0}_{L^2(dxdy)}+ \int^T_0 
\llp{\grad^{1/2}_x[e^{i(t-s)(\lapl_x-\lapl_y)}F](s, x, x)}_{L^2(dtdx)}\ ds \\
\lesssim&\  \llp{\Gamma_0}_{L^2(dxdy)}+ \llp{F}_{L^1[0,T]L^2(dxdy)}.
\end{align*}
Hence we have obtained the following proposition
\begin{prop}\label{Inhomog-eq}
Suppose $\Gamma$ solves $\vect{S}_\pm \Gamma = F$, then we have 
\begin{align}
 \llp{\grad^{1/2}_x\rho_\Gamma}_{L^2(dtdx)}
 \lesssim \llp{\Gamma_0}_{L^2(dxdy)}+ \llp{F}_{L^1[0, T]L^2(dxdy)}.
\end{align}
\end{prop}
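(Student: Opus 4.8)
The plan is to derive Proposition \ref{Inhomog-eq} from the homogeneous collapsing estimate (\ref{col-est}) via the Duhamel representation, treating the forcing term by Minkowski's integral inequality. First I would write the solution to $\vect{S}_\pm\Gamma = F$ using Duhamel's formula,
\begin{align*}
\Gamma(t, x, y) = e^{it(\lapl_x-\lapl_y)}\Gamma_0(x, y) + i\int^t_0 e^{i(t-s)(\lapl_x-\lapl_y)}F(s, x, y)\, ds,
\end{align*}
and then restrict to the diagonal $y = x$ to obtain the analogous representation (\ref{rho-eq}) for $\rho_\Gamma$. The point is that $\rho_\Gamma$ splits as a homogeneous piece plus a time integral of diagonal restrictions of homogeneous evolutions of $F(s, \cdot)$.

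Next I would apply $\grad^{1/2}_x$ and take the $L^2(dtdx)$ norm. The homogeneous term is controlled directly by Proposition \ref{collapsing-est}, giving the bound $\llp{\Gamma_0}_{L^2(dxdy)}$. For the Duhamel term, I would use Minkowski's integral inequality to pull the $L^2(dtdx)$ norm inside the $ds$ integral: for each fixed $s$, the function $(t, x) \mapsto [\grad^{1/2}_x e^{i(t-s)(\lapl_x-\lapl_y)}F(s)](x, x)$ is the diagonal restriction of a homogeneous solution of $\vect{S}_\pm$ with data $F(s, \cdot, \cdot)$ (up to a harmless time translation, which does not affect the spacetime $L^2$ norm). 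Hence Proposition \ref{collapsing-est} applies again to give $\llp{\grad^{1/2}_x[e^{i(t-s)(\lapl_x-\lapl_y)}F(s)](x,x)}_{L^2(dtdx)} \lesssim \llp{F(s, \cdot)}_{L^2(dxdy)}$, and integrating in $s$ over $[0, T]$ produces exactly $\llp{F}_{L^1[0,T]L^2(dxdy)}$.

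Combining the two contributions and using the triangle inequality yields the claimed estimate. The only mild technical point — which I would expect to be the main (but minor) obstacle — is justifying the use of the homogeneous collapsing estimate for the truncated Duhamel integral: strictly speaking the integrand is $\mathbf{1}_{[0,t]}(s)$ times a full-line evolution, so one should either extend the $t$-integration to all of $\rr$ and bound the larger quantity (since the collapsing estimate is a global-in-time estimate, dropping the indicator only increases the norm), or invoke the Christ–Kiselev lemma as in Proposition \ref{strichartz1}; either way the dependence on $T$ enters only through the $L^1_s[0,T]$ norm of $F$. With that settled the proof closes immediately.
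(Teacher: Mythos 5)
Your proposal is correct and follows essentially the same route as the paper: Duhamel's formula restricted to the diagonal, Minkowski's inequality to pull the $L^2(dtdx)$ norm inside the $ds$ integral, and the homogeneous collapsing estimate (\ref{col-est}) applied to each time slice $F(s,\cdot)$, with the time translation and the dropped indicator handled exactly as you describe. No gaps.
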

The following is a perturbed version of the above proposition. 
\begin{prop}\label{inhomog-eq2}
 Suppose $\Gamma$ solves $\vect{S}_\pm \Gamma = F$, then for every $0<\varepsilon<\frac{1}{2}$ we have 
\begin{align}
 \llp{\grad^{\frac{1}{2}-\varepsilon}_x\rho_\Gamma}_{L^2(dtdx)}\lesssim&\ T^\text{some power}
\bigg(\llp{\grad^{\sigma}_{x, y} \Gamma(t, x, y)}_{L^\infty[0, T]L^2(dxdy)} \nonumber\\
&\ +\llp{\grad^{\sigma}_{x,y}F}_{L^1[0, T]L^2(dxdy)}\bigg).
\end{align}
\end{prop}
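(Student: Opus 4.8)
The plan is to run the Duhamel formula (\ref{rho-eq}) through the homogeneous estimate of Corollary \ref{homogeneous-rho-est} term by term, converting the Strichartz time exponent $q=q(\varepsilon)$ produced there into the exponent $2$ at the cost of a power of $T$; this is permissible because Corollary \ref{homogeneous-rho-est} supplies $q=\frac{2}{(2-\varepsilon)(\frac12-\varepsilon)}$, which satisfies $q\geq 2$ for $0<\varepsilon<\frac12$.

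First I would split $\rho_\Gamma=\rho_\Gamma^{\mathrm{h}}+\rho_\Gamma^{\mathrm{i}}$ according to (\ref{rho-eq}), where $\rho_\Gamma^{\mathrm{h}}(t,x)=[e^{it(\lapl_x-\lapl_y)}\Gamma_0](x,x)$ and $\rho_\Gamma^{\mathrm{i}}(t,x)=i\int_0^t[e^{i(t-s)(\lapl_x-\lapl_y)}F(s,\cdot,\cdot)](x,x)\,ds$. For $\rho_\Gamma^{\mathrm{h}}$, Corollary \ref{homogeneous-rho-est} yields $\llp{\grad_x^{\frac12-\varepsilon}\rho_\Gamma^{\mathrm{h}}}_{L^q(dt)L^2(dx)}\lesssim\llp{\grad_{x,y}^\sigma\Gamma_0}_{L^2(dxdy)}$ with $\sigma=(\frac32-\varepsilon)\varepsilon$ (Remark \ref{sigma-n}); restricting the time integration to $[0,T]$ only decreases the left side, and H\"older in time gives $\llp{\,\cdot\,}_{L^2[0,T]L^2(dx)}\leq T^{\frac12-\frac1q}\llp{\,\cdot\,}_{L^q[0,T]L^2(dx)}$. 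Since $\Gamma_0=\Gamma(0,\cdot,\cdot)$, I can bound $\llp{\grad_{x,y}^\sigma\Gamma_0}_{L^2(dxdy)}\leq\llp{\grad_{x,y}^\sigma\Gamma}_{L^\infty[0,T]L^2(dxdy)}$, which is the first term on the right-hand side.

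Next I would treat the Duhamel piece. For fixed $s\in[0,T]$, $G_s(t,x,y):=e^{i(t-s)(\lapl_x-\lapl_y)}F(s,x,y)$ solves $\vect{S}_\pm G_s=0$, so by translation invariance of the $L^q(dt)$ norm over $\rr$ together with Corollary \ref{homogeneous-rho-est}, one gets $\llp{\grad_x^{\frac12-\varepsilon}\rho_{G_s}}_{L^q[0,T]L^2(dx)}\lesssim\llp{\grad_{x,y}^\sigma F(s,\cdot,\cdot)}_{L^2(dxdy)}$ uniformly in $s$, where $\rho_{G_s}(t,x)=G_s(t,x,x)$. For each fixed $t\in[0,T]$, commuting $\grad_x^{\frac12-\varepsilon}$ past the $s$-integral and using the triangle inequality in $L^2(dx)$ together with $\int_0^t\leq\int_0^T$ gives $\llp{\grad_x^{\frac12-\varepsilon}\rho_\Gamma^{\mathrm{i}}(t,\cdot)}_{L^2(dx)}\leq\int_0^T\llp{\grad_x^{\frac12-\varepsilon}\rho_{G_s}(t,\cdot)}_{L^2(dx)}\,ds$; taking the $L^q(dt)$ norm and applying Minkowski's integral inequality to exchange it with $\int_0^T(\cdot)\,ds$, I obtain
\begin{align*}
\llp{\grad_x^{\frac12-\varepsilon}\rho_\Gamma^{\mathrm{i}}}_{L^q[0,T]L^2(dx)}
&\lesssim\int_0^T\llp{\grad_x^{\frac12-\varepsilon}\rho_{G_s}}_{L^q[0,T]L^2(dx)}\,ds\\
&\lesssim\llp{\grad_{x,y}^\sigma F}_{L^1[0,T]L^2(dxdy)}.
\end{align*}
One more H\"older in time replaces $L^q[0,T]$ by $L^2[0,T]$ at the cost of $T^{\frac12-\frac1q}$, and adding the two contributions gives the claimed estimate, with $T^{\text{some power}}=T^{\frac12-\frac1q(\varepsilon)}$.

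The step I expect to be the main (and only, minor) obstacle is the bookkeeping in the Duhamel term: one must commute the nonlocal operator $\grad_x^{\frac12-\varepsilon}$ through the $ds$-integral before invoking the triangle inequality, and one replaces a Christ--Kiselev-type truncation argument by the crude bound $\int_0^t(\cdot)\,ds\leq\int_0^T|\cdot|\,ds$, legitimate once $L^2(dx)$ norms are taken and harmless since the statement permits a polynomial loss in $T$. It is also worth double-checking that $q(\varepsilon)\geq 2$ for $0<\varepsilon<\frac12$, so that each H\"older-in-time step contributes a nonnegative power of $T$. Apart from this, there is no analytic difficulty beyond Corollary \ref{homogeneous-rho-est} itself.
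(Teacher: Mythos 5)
Your argument is correct and is essentially the paper's own proof: both apply Corollary \ref{homogeneous-rho-est} to the Duhamel representation (\ref{rho-eq}), use Minkowski's inequality to pull the $ds$-integral out of the $L^q(dt)L^2(dx)$ norm for the inhomogeneous piece, and then invoke H\"older in time on $[0,T]$ (the content of Remark \ref{remark-1}) to pass from $L^q$ to $L^2$ using $q(\varepsilon)\geq 2$. The extra bookkeeping you flag (commuting $\grad_x^{\frac12-\varepsilon}$ past the $s$-integral, bounding $\int_0^t$ by $\int_0^T$) is exactly what the paper leaves implicit.
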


\begin{proof}
Applying Corollary \ref{homogeneous-rho-est} to (\ref{rho-eq}) yields
\begin{align*}
 \llp{\grad^{\frac{1}{2}-\varepsilon}_x&\rho_\Gamma(t, x)}_{L^2(dtdx)} \\
\lesssim&\ T^\text{some power}
\bigg(\llp{\grad^{\sigma}_{x,y}\Gamma(t,x, y)}_{L^\infty[0, T]L^2(dxdy)}\\
&+ \int^T_0 ds\
\llp{\grad^{\frac{1}{2}-\varepsilon}_x[e^{i(t-s)(\lapl_x-\lapl_y)}F](s, x, x)}_{L^q[0, T]L^2(dx)}\bigg) \\
\lesssim&\  T^\text{some power}
\left(\llp{\grad^{\sigma}_{x,y}\Gamma(t,x, y)}_{L^\infty[0, T] L^2(dxdy)}
+ \llp{\grad_{x,y}^{\sigma}F}_{L^1[0,T]L^2(dxdy)}\right)
\end{align*}
where $q= \frac{2}{(2-\varepsilon)(\frac{1}{2}-\varepsilon)}$. Then by Remark \ref{remark-1} we obtain the desired estimate.
\end{proof}

To conclude this section, let us state the inhomogeneous Strichartz estimate.
\begin{prop}\label{gamma-stric}
Suppose $\Gamma$ is a solution to $\vect{S}_\pm\Gamma = F$ with initial condition $\Gamma_0$ and $(k, \ell)$ and 
$(\tilde k, \tilde \ell)$ are an admissible pairs (see (\ref{ad-pair})). Then it follows
\begin{align}
\llp{\Gamma(t, x, y)}_{L^k(dt)L^\ell(dx)L^2(dy)}\lesssim \llp{\Gamma_0}_{L^2(dxdy)}+\llp{F}_{L^{\tilde k'}(dt)L^{\widetilde \ell'}(dx)L^2(dy)}
\end{align}
and
\begin{align}
 \llp{\grad_{x, y}^{\sigma}\Gamma(t, x, y)}_{L^k(dt)L^\ell(dx)L^2(dy)}\lesssim 
\llp{\grad_{x, y}^{\sigma}\Gamma_0}_{L^2(dxdy)}
+\llp{\grad_{x, y}^{\sigma}F}_{L^{\widetilde k'}(dt)L^{\widetilde \ell'}(dx)L^2(dy)}
\end{align}
where $(\widetilde k', \widetilde \ell')$ denotes the H\" older conjugates of $(\widetilde k, \widetilde \ell)$. 
\end{prop}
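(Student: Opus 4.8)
The plan is to derive Proposition~\ref{gamma-stric} from the homogeneous Strichartz estimate of Proposition~\ref{strichartz1} by the standard Duhamel-plus-duality argument, treating the two inequalities in parallel since the second is just the first applied to $\grad_{x,y}^\sigma \Gamma$, which solves $\vect{S}_\pm(\grad_{x,y}^\sigma\Gamma) = \grad_{x,y}^\sigma F$ because the Fourier multiplier $\langle\grad_{x,y}\rangle^\sigma$ commutes with the free propagator $e^{it(\lapl_x-\lapl_y)}$. So it suffices to prove the first estimate. Write the solution via Duhamel's formula
\begin{align*}
\Gamma(t,x,y) = e^{it(\lapl_x-\lapl_y)}\Gamma_0(x,y) + i\int_0^t e^{i(t-s)(\lapl_x-\lapl_y)}F(s,x,y)\,ds.
\end{align*}
The first term is controlled directly by Proposition~\ref{strichartz1}, which gives $\|e^{it(\lapl_x-\lapl_y)}\Gamma_0\|_{L^k(dt)L^\ell(dx)L^2(dy)}\lesssim\|\Gamma_0\|_{L^2(dxdy)}$ for any admissible $(k,\ell)$. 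The work is entirely in the Duhamel term.

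For the Duhamel term, I would run the usual $TT^*$ / $T^*$ argument in the mixed-norm setting, viewing everything with the $L^2(dy)$ slot as an inert Hilbert-space coefficient: the operator $e^{it(\lapl_x-\lapl_y)}$ factors as $e^{it\lapl_x}\otimes e^{-it\lapl_y}$, and $e^{-it\lapl_y}$ is a unitary on $L^2(dy)$, so the one-dimensional Schr\"odinger dispersive estimate in the $x$-variable is what drives the bound and the $L^2(dy)$ norm simply rides along. Concretely, Proposition~\ref{strichartz1} is equivalent to the statement that $e^{it(\lapl_x-\lapl_y)}: L^2(dxdy)\to L^k(dt)L^\ell(dx)L^2(dy)$ is bounded, hence by duality its adjoint maps $L^{k'}(dt)L^{\ell'}(dx)L^2(dy)\to L^2(dxdy)$, and composing adjoint with operator (the $TT^*$ step) shows
\begin{align*}
\Big\| \int_{\rr} e^{i(t-s)(\lapl_x-\lapl_y)} F(s,\cdot)\,ds \Big\|_{L^k(dt)L^\ell(dx)L^2(dy)} \lesssim \| F \|_{L^{\tilde k'}(dt)L^{\tilde\ell'}(dx)L^2(dy)}
\end{align*}
for admissible $(k,\ell)$ and $(\tilde k,\tilde\ell)$ (the untilded and tilded pairs may differ because the $TT^*$ kernel estimate is symmetric enough to interpolate between the two endpoints of admissibility). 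The remaining issue is that the genuine Duhamel integral is the \emph{retarded} one, $\int_0^t$, not $\int_{\rr}$; to pass from the full integral to the truncated one I would invoke the Christ--Kiselev lemma, which applies precisely because $k > \tilde k'$ for non-endpoint admissible pairs (this is where the exclusion of the endpoint $(k,\ell)=(4,\infty)$ matters, exactly as in the homogeneous case). All of this is textbook; I would simply cite \textsection2.3 of \cite{Tao} for the mixed-norm $TT^*$ computation and the Christ--Kiselev truncation, as the paper already does for Proposition~\ref{strichartz1}.

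The only genuinely non-routine point — and the one I would flag as the main obstacle, though it is mild — is bookkeeping the commutation of $\langle\grad_{x,y}\rangle^\sigma$ past the propagator and the Duhamel integral, and making sure that the fractional operator $(\langle\grad_x\rangle^2\otimes 1 + 1\otimes\langle\grad_y\rangle^2)^{\sigma/2}$ (rather than a product of one-dimensional multipliers) still commutes with $e^{it(\lapl_x-\lapl_y)}$: it does, because both are Fourier multipliers in $(x,y)$ and hence diagonal in the same basis, so the second inequality follows from the first with $\Gamma_0\rightsquigarrow\langle\grad_{x,y}\rangle^\sigma\Gamma_0$ and $F\rightsquigarrow\langle\grad_{x,y}\rangle^\sigma F$ with no loss. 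Once that is observed the proof is a one-line reduction. I would therefore present the argument as: (i) reduce the second estimate to the first via the commuting multiplier; (ii) split by Duhamel and handle the free term by Proposition~\ref{strichartz1}; (iii) handle the forcing term by $TT^*$, duality, and Christ--Kiselev, citing \cite{Tao}.
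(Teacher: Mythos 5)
Your proof is correct and is precisely the standard argument the paper implicitly relies on: Proposition \ref{gamma-stric} is stated without proof, with only \textsection 2.3 of \cite{Tao} cited for the homogeneous version, and your Duhamel/$TT^\ast$/Christ--Kiselev reduction together with the commuting-multiplier observation for $\grad_{x,y}^{\sigma}$ is exactly what fills that in. One small quibble that does not affect validity: in $\rr^{1+1}$ the pair $(k,\ell)=(4,\infty)$ is admissible and not excluded (the Keel--Tao forbidden endpoint does not exist in one dimension), and the Christ--Kiselev hypothesis $\tilde k' < k$ holds for every pair in the stated range since $\tilde k'\le 4/3 < 4\le k$, so nothing in your argument actually hinges on that parenthetical.
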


\section{Application of the Inhomogeneous $\Gamma$ Estimates }
The purpose of this section is to develop estimates which we will later use in the proof of our main theorem in \textsection \ref{Proof-of-main}. 
However, as an immediate application of the previous two sections, we are now
ready to consider the uniform in $N$ local well-posedness of the following Hartree-Fock equation 
\begin{align}\label{Hartree}
\frac{1}{i}\frac{\bd}{\bd t}\Gamma = [\lapl - v_N\ast\rho_\Gamma, \Gamma]
\end{align}
or equivalently
\begin{align}
\vect{S}_\pm \Gamma(t, x, y) = [v_N\ast\rho_\Gamma(t, x)-v_N\ast\rho_\Gamma(t, y)]\Gamma(t, x, y) = F
\end{align}
in some Strichartz-type space $X$ equipped with the norm
\begin{align}\label{norm}
\llp{\Gamma}_X:=&\  \llp{\grad^{\frac{1}{2}-\varepsilon}_x\rho_\Gamma(t, x)}_{L^2[0, T] L^2(dx)}+\llp{\grad^{1/2}_x\rho_\Gamma(t, x)}_{L^2[0, T] L^2(dx)}\\
&\ +\llp{\langle\grad_{x,y}\rangle^{\sigma}\Gamma(t, x, y)}_{L^\infty[0, T] L^2(dxdy)}
+\llp{\langle\grad_{x,y}\rangle^{\sigma}\Gamma(t, x, y)}_{L^4[0, T] L^\infty(dx)L^2(dy)} \nonumber
\end{align}
where $\varepsilon$ is sufficiently small, say $\varepsilon<\frac{1}{5}$. 

The uniform in $N$ local well-posedness is proven using the standard Banach fixed-point argument. More precisely, we close the estimate for (\ref{Hartree}) in $X$. In particular, we close the estimate for each of the three norms indicated in (\ref{norm}).  However, by Proposition \ref{Inhomog-eq} and Proposition \ref{inhomog-eq2}, it suffices to consider estimates for the corresponding forcing terms. 

 First, let us estimate $\llp{F}_{L^1[0, T]L^2(dxdy)}$. By H\" older's inequality, we see that
\begin{align*} 
 \llp{F}_{L^1[0, T]L^2(dxdy)} \lesssim&\ \llp{v_N\ast\rho_\Gamma(t, x)
\Gamma(t, x, y)}_{L^1[0, T]L^2(dxdy)}\\
 \lesssim&\ \llp{v_N\ast\rho_\Gamma(t, x)}_{L^2(dt)L^{p}(dx)}\llp{\Gamma(t, x, y)}_{L^{2}(dt)L^{r}(dx)L^2(dy)}
 \end{align*}
 where we made the choice $p = 1/\varepsilon$ and $r = 2(1-2\varepsilon)^{-1}$. Then by Gagligardo-Nirenberg-Sobolev inequality, Young's convolution inequality and H\" older inequality, in the time variable, we obtain the estimate
 \begin{align*}
\llp{F}_{L^1[0, T]L^2(dxdy)} \lesssim&\ \llp{v_N\ast\grad_x^{\frac{1}{2}-\varepsilon}\rho_\Gamma(t, x)}_{L^2(dtdx)}\llp{\Gamma(t, x, y)}_{L^{2}(dt)L^{r}(dx)L^2(dy)}\\
\lesssim&\ T^\text{some power}\llp{\grad_x^{\frac{1}{2}-\varepsilon}\rho_\Gamma(t, x)}_{L^2(dtdx)}\llp{\Gamma(t, x, y)}_{L^{q}(dt)L^{r}(dx)L^2(dy)}
\end{align*}
where $q = 2\varepsilon^{-1}$. Note that $q$ is chosen  so that $(q, r)$ is a 1D Strichartz admissible pair. Hence by interpolation, we see that
\begin{align*}
\llp{F}_{L^1[0, T]L^2(dxdy)} \lesssim T^\text{some power}\llp{\Gamma}_X^2.
\end{align*}
Likewise, we can show that $\llp{\grad^{1/2}_{x, y} F}_{L^1[0, T]L^2(dxdy)}$ also closes. 

Next, let us estimate $ \llp{\grad_{x, y}^{\sigma}F}_{L^1[0, T]L^2(dxdy)}$. Using the classical Kato-Ponce inequality, sometimes refers to as ``fractional Leibniz rule", we see that  
\begin{align*}
 \llp{\grad_{x, y}^{\sigma}F}_{L^1[0, T]L^2(dxdy)} \lesssim&\
 \llp{\grad^{\sigma}_{x, y}[v_N\ast\rho_\Gamma(t, x)
\Gamma(t, x, y)]}_{L^1[0, T]L^2(dxdy)}\\
\lesssim&\ \llp{ v_N\ast \rho_\Gamma(t, x)}_{L^2(dt)L^p(dx)}
\llp{\grad^{\sigma}_{y}\Gamma(t, x, y)}_{L^2(dt)L^r(dx)L^2(dy)}\\ 
&+\llp{  v_N\ast \grad^{\sigma}_{x}\rho_\Gamma(t, x)}_{L^2(dt)L^{\widetilde p}(dx)}
\llp{\Gamma(t, x, y)}_{L^2(dt)L^{\widetilde r}(dx)L^2(dy)}\\
&+\ \llp{v_N\ast\rho_\Gamma(t, x) }_{L^2(dt)L^{p}(dx)}\llp{\grad^{\sigma}_{x} 
\Gamma(t, x, y)}_{L^2[0, T]L^r(dx)L^2(dy)}
\end{align*}
where $\widetilde p=2[(5-2\varepsilon)\varepsilon]^{-1}, \widetilde r = [\varepsilon^2-\frac{5}{2}\varepsilon+\frac{1}{2}]^{-1}$ and $p, r$ as defined above. Hence by the same argument as above with $\widetilde q = 2[(\frac{5}{2}-\varepsilon)\varepsilon]^{-1}$ we see that
\begin{align*}
 \llp{\grad^{\sigma}_{x, y}&[v_N\ast\rho_\Gamma(t, x)\Gamma(t, x, y)]}_{L^1[0, T]L^2(dxdy)}\\
 \lesssim&\ T^\text{some power}\llp{\grad^{\frac{1}{2}-\varepsilon}_x\rho_\Gamma(t, x)}_{L^2(dtdx)} \llp{\grad^{\sigma}_{y}\Gamma(t, x, y)}_{L^q(dt)L^r(dx)L^2(dy)}\\ 
 &+ T^\text{some power}\llp{\grad^{\frac{1}{2}-\varepsilon}_x \rho_\Gamma(t, x)}_{L^2(dtdx)} \llp{\Gamma(t, x, y)}_{L^{\widetilde q}(dt)L^{\widetilde r}(dx)L^2(dy)}\\
 &+ T^\text{some power}\llp{\grad^{\frac{1}{2}-\varepsilon}_x \rho_\Gamma(t, x)}_{L^2(dtdx)} \llp{\grad^{\sigma}_{x}\Gamma(t, x, y)}_{L^q(dt)L^r(dx)L^2(dy)}
\end{align*}
Again, note that $(\widetilde q, \widetilde r)$ is an admissible pair which means the desired estimate holds by interpolation.
\begin{remark}
The below estimate is included in this section purely for the author's own organizational purposes. Hence the reader may skip it for now and refer back to it in \textsection 8.
\end{remark}

Lastly, observe we have
\begin{align*}
 \llp{\grad_{x+y}^{\sigma} F}_{L^1[0, T]L^2(dxdy)} \lesssim&\ 
\llp{\grad_{x+y}^{\sigma}
[v_N\ast \rho_\Gamma(t, x)\Gamma(t, x, y)]}_{L^1[0, T]L^2(dxdy)}\\
\lesssim&\ \llp{  v_N\ast\grad^{\sigma}_{x}\rho_\Gamma(t, x)}_{L^2(dt)L^{\widetilde p}(dx)}
\llp{\Gamma(t, x, y)}_{L^2(dt)L^{\widetilde r}(dx)L^2(dy)}\\
&+\ \llp{v_N\ast\rho_\Gamma(t, x) }_{L^2(dt)L^{p}(dx)}\llp{\grad^{\sigma}_{x+y} 
\Gamma(t, x, y)}_{L^2[0, T]L^r(dx)L^2(dy)}\\
\lesssim&\ T^\text{some power}\llp{\grad^{\frac{1}{2}-\varepsilon}_x \rho_\Gamma(t, x)}_{L^2(dtdx)} \llp{\Gamma(t, x, y)}_{L^{\widetilde q}[0, T]L^{\widetilde r}(dx)L^2(dy)}\\
 &+ T^\text{some power}\llp{\grad^{\frac{1}{2}-\varepsilon}_x \rho_\Gamma(t, x)}_{L^2(dtdx)} \llp{\grad^{\sigma}_{x}\Gamma(t, x, y)}_{L^q[0, T]L^r(dx)L^2(dy)}
\end{align*}
where $\grad_{x+y} F:= \frac{1}{2}(\grad_x F+\grad_y F)$. 

\begin{remark}\label{pqr}
Since similar calculations will be performed in \textsection 8, then for convenience  we shall fix the values of $p, q, r, \widetilde p, \widetilde q, \widetilde r$ as indicated above for a given $\varepsilon$ in the remaining of this article.
\end{remark}

As a result of the above calculation, we obtain the following proposition
\begin{prop}
 Suppose $\Gamma$ solves (\ref{Hartree}) with Schwartz initial condition $\Gamma_0$ and $v \in L^1(\rr)$.  Then the following estimate holds
 \begin{align*}
 \llp{\Gamma}_X \lesssim \llp{\langle\grad_{x,y}\rangle^{\sigma}\Gamma_0}_{L^2(dxdy)}+T^{\text{some power}}\llp{\Gamma}_X^2.
 \end{align*}
Thus, there exists $T_0>0$ such that for all $0<T\leq T_0$
\begin{align*}
\llp{\Gamma}_X \lesssim \llp{\langle\grad_{x,y}\rangle^{\sigma}\Gamma_0}_{L^2(dxdy)}.
\end{align*}
Similarly, we can show that
\begin{align*}
 \llp{\bd_t\Gamma}_X \lesssim \llp{\langle\grad_{x,y}\rangle^{\sigma}\bd_t\Gamma_0}_{L^2(dxdy)}+T^{\text{some power}}\llp{\Gamma}_X\llp{\bd_t\Gamma}_X
\end{align*}
which again means there exists $T_0>0$ such that
\begin{align*}
\llp{\bd_t\Gamma}_X \lesssim \llp{\langle\grad_{x,y}\rangle^{\sigma}\bd_t\Gamma_0}_{L^2(dxdy)}.
\end{align*}
\end{prop}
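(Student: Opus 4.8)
The plan is to run a Picard-type a priori estimate in the norm $\llp{\Gamma}_X$ of (\ref{norm}), reducing each of its four constituent pieces, via Duhamel's formula, to the forcing-term bounds already established in this section. First I would write the solution as $\Gamma(t)=e^{it(\lapl_x-\lapl_y)}\Gamma_0+i\int_0^t e^{i(t-s)(\lapl_x-\lapl_y)}F(s)\,ds$ with $F=[v_N\ast\rho_\Gamma(t,x)-v_N\ast\rho_\Gamma(t,y)]\Gamma$. Then: the piece $\llp{\grad_x^{1/2}\rho_\Gamma}_{L^2 L^2}$ is handled by Proposition \ref{Inhomog-eq}, which costs $\llp{\Gamma_0}_{L^2}+\llp{F}_{L^1 L^2}$; the piece $\llp{\grad_x^{1/2-\varepsilon}\rho_\Gamma}_{L^2 L^2}$ is handled by Proposition \ref{inhomog-eq2}, whose right-hand side splits (through its proof via Corollary \ref{homogeneous-rho-est}) into a free-evolution contribution $\llp{\grad_{x,y}^\sigma\Gamma_0}_{L^2}$ plus $T^{\text{some power}}\llp{\grad_{x,y}^\sigma F}_{L^1 L^2}$; the piece $\llp{\langle\grad_{x,y}\rangle^\sigma\Gamma}_{L^\infty_t L^2}$ is handled by unitarity of $e^{it(\lapl_x-\lapl_y)}$ and Minkowski's inequality, again costing $\llp{\langle\grad_{x,y}\rangle^\sigma\Gamma_0}_{L^2}+\llp{\grad_{x,y}^\sigma F}_{L^1 L^2}$; and the Strichartz piece $\llp{\langle\grad_{x,y}\rangle^\sigma\Gamma}_{L^4 L^\infty L^2}$ is handled by Proposition \ref{gamma-stric} with the admissible pair $(k,\ell)=(4,\infty)$ and the dual pair $(\widetilde k',\widetilde\ell')=(1,2)$, so that $F$ is again measured in $L^1(dt)L^2(dx)L^2(dy)$. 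Consequently all four norms are dominated by $\llp{\langle\grad_{x,y}\rangle^\sigma\Gamma_0}_{L^2}$ plus one of $\llp{F}_{L^1 L^2}$, $\llp{\grad_{x,y}^{1/2}F}_{L^1 L^2}$, $\llp{\grad_{x,y}^\sigma F}_{L^1 L^2}$, each of which the computations of this section bound by $T^{\text{some power}}\llp{\Gamma}_X^2$ — and uniformly in $N$, since $v_N$ enters only through Young's inequality $\llp{v_N\ast g}_{L^p}\le\llp{v}_{L^1}\llp{g}_{L^p}$.

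Summing the four contributions yields the quadratic inequality $\llp{\Gamma}_X\lesssim\llp{\langle\grad_{x,y}\rangle^\sigma\Gamma_0}_{L^2}+T^\theta\llp{\Gamma}_X^2$ for some $\theta>0$ and all $T\le 1$. Since $\Gamma_0$ is Schwartz the solution $\Gamma$ is smooth and every norm in $X$ is finite, so $T\mapsto\llp{\Gamma}_{X([0,T])}$ is nondecreasing, continuous, and tends to $\llp{\langle\grad_{x,y}\rangle^\sigma\Gamma_0}_{L^2}$ as $T\to 0^+$ (the pieces with a finite $L^q_t$ norm vanish with $T$). A routine continuity/bootstrap argument then upgrades the quadratic inequality to $\llp{\Gamma}_X\lesssim\llp{\langle\grad_{x,y}\rangle^\sigma\Gamma_0}_{L^2}$ for $0<T\le T_0$, with $T_0\sim\llp{\langle\grad_{x,y}\rangle^\sigma\Gamma_0}_{L^2}^{-1/\theta}$ depending on $\Gamma_0$ but not on $N$.

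For the time-derivative estimate, I would differentiate the equation in $t$: $\bd_t\Gamma$ solves $\vect{S}_\pm(\bd_t\Gamma)=\bd_t F$, and by the product rule together with $\rho_{\bd_t\Gamma}=\bd_t\rho_\Gamma$ one has $\bd_t F=[v_N\ast\rho_{\bd_t\Gamma}(t,x)-v_N\ast\rho_{\bd_t\Gamma}(t,y)]\Gamma+[v_N\ast\rho_\Gamma(t,x)-v_N\ast\rho_\Gamma(t,y)]\bd_t\Gamma$, which is bilinear in $(\Gamma,\bd_t\Gamma)$. Re-running the estimates of this section with one factor $\Gamma$ and one factor $\bd_t\Gamma$ gives $\llp{\bd_t\Gamma}_X\lesssim\llp{\langle\grad_{x,y}\rangle^\sigma\bd_t\Gamma_0}_{L^2}+T^\theta\llp{\Gamma}_X\llp{\bd_t\Gamma}_X$, where the initial datum $\bd_t\Gamma_0=i[\lapl-v_N\ast\rho_{\Gamma_0},\Gamma_0]$ is read off from the equation at $t=0$ and is Schwartz. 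Inserting the bound $\llp{\Gamma}_X\lesssim\llp{\langle\grad_{x,y}\rangle^\sigma\Gamma_0}_{L^2}$ from the first part and shrinking $T_0$ so that $CT_0^\theta\llp{\Gamma}_X<\tfrac12$ lets one absorb the last term into the left-hand side, yielding $\llp{\bd_t\Gamma}_X\lesssim\llp{\langle\grad_{x,y}\rangle^\sigma\bd_t\Gamma_0}_{L^2}$.

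The hard part is not this assembly step but rather sits inside the forcing-term estimates invoked above: the key mechanisms are the false-endpoint Gagliardo--Nirenberg workaround of Remark \ref{remark-1}, which trades a full half-derivative on $\rho_\Gamma$ for $\grad_x^{1/2-\varepsilon}\rho_\Gamma$ at the cost of a Strichartz-admissible time exponent and a positive power of $T$, together with the Kato--Ponce fractional Leibniz rule used to distribute $\grad_{x,y}^\sigma$ across the product $(v_N\ast\rho_\Gamma)\,\Gamma$. The delicate point is the bookkeeping guaranteeing that every error term genuinely carries a strictly positive power of $T$ — this is precisely what forces $\varepsilon$ to be small (e.g. $\varepsilon<\tfrac15$) and $\sigma=(\tfrac32-\varepsilon)\varepsilon<\tfrac12$, so that all the H\"older-in-time exponents $p,q,r,\widetilde p,\widetilde q,\widetilde r$ of Remark \ref{pqr} and the Strichartz pairs used are simultaneously admissible; it is this positive power of $T$ that makes the quadratic (respectively bilinear) inequality closable, and the reduction of all $v_N$-dependence to $\llp{v}_{L^1}$ that makes the estimates uniform in $N$.
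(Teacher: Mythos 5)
Your proposal is correct and follows essentially the same route as the paper: reduce each of the four constituent norms of $X$ to forcing-term bounds via Proposition \ref{Inhomog-eq}, Proposition \ref{inhomog-eq2}, unitarity, and Proposition \ref{gamma-stric}, then invoke the section's H\"older/Gagliardo--Nirenberg/Young/Kato--Ponce estimates of $\llp{F}_{L^1L^2}$ and $\llp{\grad^{\sigma}_{x,y}F}_{L^1L^2}$ to close the quadratic (resp.\ bilinear, for $\bd_t\Gamma$) inequality, with uniformity in $N$ coming from $\llp{v_N}_{L^1}=\llp{v}_{L^1}$. The only differences are presentational: you make explicit the continuity/bootstrap step and the choice of dual admissible pair $(\infty,2)$, which the paper leaves implicit.
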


 \section{Homogeneous $\Lambda$ Equation}
In this section we prove collapsing estimates for the linear Schr\" odinger equations
\begin{align}
\frac{1}{i}\frac{\bd}{\bd t}\Lambda -\lapl_x \Lambda -\lapl_y\Lambda = 0
\end{align}
which we will need later. As mentioned in the introduction, one of the main difficulties 
in the analysis of equation (\ref{nl-lambda}) is that the $L^p$-norms of the potential $N^{-1}v_N(x-y)$
are  not uniformly bounded in $N$ when $p>1$ and $\beta$ arbitrarily large
 since $N^{-1}\llp{v_N(x-y)}_p \sim N^{-1+\beta(1-\frac{1}{p})}$.  More precisely, 
from Proposition \ref{weird-deriv}, we see that the natural space to put the 
nonlinearity of equation (\ref{nl-Lam}) 
is in $L^1[0, T]L^2(dxdy)$. In particular, when handling the term 
$N^{-1}v_N(x-y) \Lambda(t, x, y)$ from equation (\ref{nl-lambda}) in $L^1([0, T]\times L^2(\rr^2))$, 
we see there is no way (at least no simple way)
 to put the term $N^{-1}v_N(x-y)$ in $L^1(d(x-y))$. Thus, the 
purposes of \textsection $6$ and \textsection $7$ are to develop sufficient amount of tools to
 handle $N^{-1}v_N(x-y)\Lambda(t, x, y)$ and all 
the nonlinearity coming from the TDHBF equations.

One of the crucial tools for our analysis is the $X^{s, b}$ spaces (sometimes called the 
\emph{Bourgain spaces} or \emph{dispersive Sobolev spaces}) which is 
defined to be the closure of the Schwartz class, $\mathcal{S}_{t, x}(\rr\times\rr\times\rr)$ with
respect to the norm
\begin{align*}
 \llp{u}_{X^{s, b}_{\vect{S}}}= \llp{(1+ |\xi|^2+|\eta|^2)^{s}
(1+|\tau+|\xi|^2+|\eta|^2|)^{b} \tilde u(\tau,\xi, \eta) }_{L^2(d\tau)L^2(d\xi d\eta)}.
\end{align*}
For this paper, $s$ is always zero and we are only interested in 
defining the $X^{s, b}$ spaces for the operator $\vect{S}$. 
Hence we dropped both the $s$ and $\vect{S}$ labels from the norm to simplify the notation.
 For instance, we have $\llp{u}_{X^b} = \llp{u}_{X^{0, b}_\vect{S}}$.  We
refer the interested reader to \textsection 2.6 in \cite{Tao} for a more complete introduction to these spaces.     

 Same as the von-Neumann Schr\" odinger equation, we first obtain a collapsing estimate for the above equation.  
\begin{prop}\label{Lambda-collapse}
Suppose $\vect{S}\Lambda = 0$ with Schwartz initial condition $\Lambda(0, x, y) = \Lambda_0(x, y)$ then
\begin{align}\label{L-collapse}
 \llp{p\left(\bd_t, \grad_x\right)\Lambda(t, x, x)}_{L^2(dtdx)} \lesssim \llp{\Lambda_0}_{L^2(dxdy)}.
\end{align}
where $p(\bd_t, \grad_x)$ is a pseudodifferential operator with symbol $\tilde p(\tau, \xi) = |\tau+|\xi|^2|^{1/4}$.  
\end{prop}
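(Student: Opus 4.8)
The plan is to rerun the Fourier-space argument from the proof of Proposition~\ref{collapsing-est}, with $\vect{S}_\pm$ replaced by $\vect{S}$, and to see how the convexity of the new symbol dictates the gain. First I would write the solution as $\Lambda(t,x,y)=e^{it(\lapl_x+\lapl_y)}\Lambda_0(x,y)$, so that $\widehat{\Lambda}(t,\xi_1,\xi_2)=e^{-it(\xi_1^2+\xi_2^2)}\widehat{\Lambda_0}(\xi_1,\xi_2)$, restrict to the diagonal $x=y$, and take the spacetime Fourier transform exactly as there; this yields
\begin{align*}
\widetilde{\Lambda(t,x,x)}(\tau,\xi)=\int d\xi_1\ \delta\big(\tau+\xi_1^2+(\xi-\xi_1)^2\big)\,\widehat{\Lambda_0}(\xi_1,\xi-\xi_1).
\end{align*}
The essential difference from $\vect{S}_\pm$ is already visible here: the collapsed phase $\xi_1\mapsto \xi_1^2+(\xi-\xi_1)^2=2\xi_1^2-2\xi\xi_1+\xi^2$ is a convex quadratic in $\xi_1$ rather than affine, with minimum $\tfrac12\xi^2$ attained at $\xi_1=\xi/2$. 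Hence the $\delta$-measure is supported where $2\tau+\xi^2\le 0$; on that set the level set is the pair of points $\xi_{1,\pm}=\tfrac12(\xi\pm\sqrt D)$ with discriminant $D:=-(2\tau+\xi^2)=-2(\tau+\tfrac12\xi^2)\ge 0$, and since the phase has derivative of modulus $2\sqrt D$ at $\xi_{1,\pm}$, one obtains $\int d\xi_1\ \delta(\tau+\xi_1^2+(\xi-\xi_1)^2)=D^{-1/2}$.

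The remainder is the same Cauchy--Schwarz bookkeeping as in Proposition~\ref{collapsing-est}, now carrying a variable weight. After the usual mollification of $\delta$, Cauchy--Schwarz in the measure $\delta(\cdots)\,d\xi_1$ gives $|\widetilde{\Lambda(t,x,x)}(\tau,\xi)|^2\le D^{-1/2}\int d\xi_1\,\delta(\cdots)|\widehat{\Lambda_0}(\xi_1,\xi-\xi_1)|^2$. Then by Plancherel in $(t,x)$, together with the two facts that on the support of the measure $|\tilde p(\tau,\xi)|^2=|\tau+\tfrac12\xi^2|^{1/2}=(D/2)^{1/2}$ --- so the weight $|\tilde p(\tau,\xi)|^2D^{-1/2}$ is the constant $2^{-1/2}$ --- and that the inner $\tau$-integral of $\delta(\tau+\xi_1^2+(\xi-\xi_1)^2)$ equals $1$ for every $(\xi,\xi_1)$, one arrives at
\begin{align*}
\big\|p(\bd_t,\grad_x)\Lambda(t,x,x)\big\|^2_{L^2(dtdx)} &\lesssim \int d\tau\,d\xi\int d\xi_1\ \delta(\cdots)\,|\widehat{\Lambda_0}(\xi_1,\xi-\xi_1)|^2\\
&=\int d\xi\,d\xi_1\ |\widehat{\Lambda_0}(\xi_1,\xi-\xi_1)|^2\sim\|\Lambda_0\|^2_{L^2(dxdy)},
\end{align*}
which is the claim. (The precise power $\tfrac14$ in $\tilde p$ is exactly what makes $|\tilde p|^2D^{-1/2}$ scale-invariant here; the $\tfrac12\xi^2$ versus $\xi^2$ inside the symbol is a harmless constant fixed by how one normalizes the diagonal variable.)

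The one genuinely new point --- and the reason $\vect{S}$ is not handled by a fixed fractional derivative the way $\vect{S}_\pm$ was --- is that the push-forward of the collapsed $\delta$-measure degenerates like $D^{-1/2}$ along the locus $2\tau+|\xi|^2=0$ where its two branches merge, so no power of $|\grad_x|$ alone can absorb the singularity. The correct gain is the variable-coefficient operator $p(\bd_t,\grad_x)$, a quarter power of the $\vect{S}$-symbol restricted to the diagonal (cf.\ the $X^{s,b}$ weight), which cancels it exactly. Once one recognizes that this is the right operator, the proof is a line-by-line analogue of the proof of Proposition~\ref{collapsing-est}, and I do not anticipate any further obstacle.
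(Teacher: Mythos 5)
Your strategy is the same as the paper's: take the spacetime Fourier transform of the diagonal trace, compute the push-forward of the collapsed $\delta$-measure explicitly, and apply Cauchy--Schwarz. Your evaluation of that measure is correct, and in fact more careful than the printed proof: completing the square gives
\begin{align*}
\int d\xi_1\ \delta\big(\tau+\xi_1^2+(\xi-\xi_1)^2\big)=\big(-(2\tau+\xi^2)\big)^{-1/2}=\big(2\,\big|\tau+\tfrac12\xi^2\big|\big)^{-1/2},
\end{align*}
whereas the paper, after an algebra slip in the change of variables (writing $|\xi\pm\eta|$ where the completed square produces $|\tfrac{\xi}{2}\pm\eta|$), records the value $\tfrac12(-\tau-|\xi|^2)^{-1/2}$ and so appears to get an exact cancellation against the stated symbol.

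The one genuine gap is your parenthetical claim that the discrepancy between $|\tau+\tfrac12|\xi|^2|^{1/4}$ and the stated symbol $|\tau+|\xi|^2|^{1/4}$ is ``a harmless constant fixed by how one normalizes the diagonal variable.'' It is not. On the support of the collapsed measure one has $\tau+\xi^2=2\xi_1(\xi-\xi_1)$ while $\tau+\tfrac12\xi^2=-\tfrac12(\xi-2\xi_1)^2$, so the ratio of the two candidate weights is $4|\xi_1(\xi-\xi_1)|/(\xi-2\xi_1)^2$, which blows up as $\xi_1\to\xi/2$ --- precisely at the degenerate locus you yourself identified. With the literal symbol $|\tau+|\xi|^2|^{1/4}$ the estimate actually fails: for $\widehat{\Lambda_0}$ concentrated in an $\varepsilon$-neighborhood of a point on the frequency diagonal $\xi_1=\xi_2\sim 1$, the left-hand side is controlled from below by $\int_{0<D\lesssim \varepsilon^2}D^{-1}\,dD$ (in your notation $D=-(2\tau+\xi^2)$), which diverges, while $\llp{\Lambda_0}_{L^2(dxdy)}$ stays bounded. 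What your computation proves --- correctly --- is the proposition with symbol $|\tau+\tfrac12|\xi|^2|^{1/4}$, and that is the form that should be carried forward; it costs nothing downstream, since the $TT^\ast$ argument of Proposition \ref{weird-deriv} only uses that the symbol is $|\tau+c|\xi|^2|^{1/4}$ for some fixed constant $c$. So: same method as the paper, correct core computation, but the factor of $\tfrac12$ must not be waved away --- it is a (fixable) error in the statement of the symbol, not a normalization convention.
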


\begin{proof}
 Let us begin by taking the spacetime Fourier transform of the trace of $\Lambda$ to get
\begin{align*}
 \widetilde{\Lambda(t, x, x)} =& \int dtdx\ e^{-i(\tau t+\xi\cdot x)}\Lambda(t, x, x) 
=\ \int dtdxdy\ e^{-i(\tau t+\xi\cdot x)}\delta(x-y)\Lambda(t, x, y)\\
=&\ \frac{1}{2\pi}\int d\eta dtdxdy\ e^{-i(\tau t+(\xi-\eta)\cdot x+\eta\cdot y)}\Lambda(t, x, y) 
=\ \frac{1}{2\pi}\int d\eta dt\ e^{-i\tau t}\widehat{\Lambda}(t, \xi-\eta, \eta)\\
=&\ \frac{1}{2\pi}\int d\eta\ \delta(\tau+|\xi-\eta|^2+|\eta|^2)\widehat{\Lambda_0}(\xi-\eta, \eta).
\end{align*}
Applying Cauchy-Schwarz inequality yields the following estimate
\begin{align*}
 \int d\tau d\xi\ |(\tau+|\xi|^2)^{1/4}\widetilde{\Lambda(t, x, x)}(\tau, \xi)|^2 
\lesssim\ \sup_{\tau, \xi}|I(\tau, \xi)|\llp{\Lambda_0}_{L^2(dxdy)}^2
\end{align*}
where
\begin{align*}
I(\tau, \xi) := \sqrt{\tau+|\xi|^2}\int d\eta\ \delta(\tau +|\xi-\eta|^2+|\xi+\eta|^2).
\end{align*}
Observe, we have the identity
\begin{align*}
\int d\eta\ \delta(\tau +|\xi-\eta|^2+|\xi+\eta|^2) &= \int_{\rr} 
\frac{\delta(\eta-\sqrt{-\tau-|\xi|^2})+\delta(\eta+\sqrt{-\tau-|\xi|^2})}{4\sqrt{-\tau-|\xi|^2}}\ d\eta \\
&= \frac{1}{2\sqrt{-\tau -|\xi|^2}}.
\end{align*}
Thus, it follows
\begin{align*}
  \int d\tau d\xi\ ||\tau+|\xi|^2|^{1/4}\widetilde{\Lambda(t, x, x)}(\tau, \xi)|^2 
\lesssim \llp{\Lambda_0}_{L^2(dxdy)}^2.
\end{align*}
\end{proof}

Unfortunately, the homogeneous derivative $p(\bd_t, \grad_x)$ of the restriction of $\Lambda$ to the diagonal is not of any immediate use to our studies of the nonlinear coupled equations. Since the nonlinearity in TDHFB involves trace of $\Lambda$, we need estimates that will allow us to control the restricted $\Lambda$ 
by the spacetime derivative $p(\bd_t, \grad_x)$ of the restriction of $\Lambda(t, x, y)$ to the diagonal. One such estimate is given by the following proposition. 
\begin{prop}\label{weird-deriv}
 Suppose $\vect{S}\Lambda =0$, then we have
\begin{align}
 \llp{\Lambda(t, x, x)}_{L^4(dt)L^2(dx)} \lesssim \llp{p(\bd_t, \grad_x)\Lambda(t, x, x)}_{L^2(dtdx)}
\end{align}
\end{prop}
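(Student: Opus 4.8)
The plan is to exploit that the operator $p(\bd_t,\grad_x)$, whose symbol is $|\tau+|\xi|^2|^{1/4}$, is tailored so that after conjugating out the free Schr\"odinger evolution it becomes an honest fractional time derivative of order $\tfrac14$; once this is observed, the estimate reduces to Minkowski's inequality together with the one-dimensional endpoint Sobolev embedding $\dot H^{1/4}(\rr)\hookrightarrow L^4(\rr)$. Set $u(t,x):=\Lambda(t,x,x)$ and assume the right-hand side is finite, otherwise there is nothing to prove. Writing $\widehat{u}(t,\xi)$ for the Fourier transform of $u$ in $x$ alone, introduce
\begin{align*}
f(t,\xi):=e^{it|\xi|^2}\,\widehat{u}(t,\xi),
\end{align*}
which strips off the Schr\"odinger phase. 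A change of the time--frequency variable $\tau\mapsto\tau-|\xi|^2$ shows that the time-Fourier transform of $f(\cdot,\xi)$ at frequency $\tau$ equals $\widetilde{u}(\tau-|\xi|^2,\xi)$; hence, by Plancherel in $(t,x)$,
\begin{align*}
\int d\xi\,\llp{f(\cdot,\xi)}_{\dot H^{1/4}(dt)}^{2}=\int d\tau d\xi\ |\tau+|\xi|^2|^{1/2}\,|\widetilde{u}(\tau,\xi)|^{2}\simeq\llp{p(\bd_t,\grad_x)u}_{L^2(dtdx)}^{2}.
\end{align*}
In other words, the right-hand side of the proposition is comparable to $\llp{f}_{L^2(d\xi)\dot H^{1/4}(dt)}$.

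Since the modulation $e^{it|\xi|^2}$ is unimodular, Plancherel in $x$ gives $\llp{u(t,\cdot)}_{L^2(dx)}\simeq\llp{f(t,\cdot)}_{L^2(d\xi)}$ for each $t$, and therefore $\llp{u}_{L^4(dt)L^2(dx)}\simeq\llp{f}_{L^4(dt)L^2(d\xi)}$. Now apply Minkowski's integral inequality, which is licit because $4\geq2$, to exchange the order of the norms,
\begin{align*}
\llp{f}_{L^4(dt)L^2(d\xi)}\leq\llp{f}_{L^2(d\xi)L^4(dt)},
\end{align*}
and then invoke the scale-invariant one-dimensional Sobolev embedding $\dot H^{1/4}(\rr)\hookrightarrow L^4(\rr)$ (valid since $\tfrac14=\tfrac12-\tfrac14$) on $f(\cdot,\xi)$ for almost every fixed $\xi$, followed by integration in $\xi$:
\begin{align*}
\llp{f}_{L^2(d\xi)L^4(dt)}\lesssim\llp{f}_{L^2(d\xi)\dot H^{1/4}(dt)}.
\end{align*}
Chaining the three displays yields $\llp{u}_{L^4(dt)L^2(dx)}\lesssim\llp{p(\bd_t,\grad_x)u}_{L^2(dtdx)}$, as claimed.

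I expect the only substantive step to be the first one: spotting the conjugation $\widehat{u}(t,\xi)\mapsto e^{it|\xi|^2}\widehat{u}(t,\xi)$ and checking that it intertwines $p(\bd_t,\grad_x)$ with the purely temporal operator $|D_t|^{1/4}$. Everything afterwards is soft. Note that the hypothesis $\vect{S}\Lambda=0$ is used only to ensure that $u=\Lambda(t,x,x)$ is an honest function for which these Fourier-side manipulations make sense (cf.\ the explicit formula for $\widetilde{\Lambda(t,x,x)}$ obtained in the proof of Proposition \ref{Lambda-collapse}); the inequality itself is insensitive to any further structure of $u$. The one technical wrinkle --- interchanging the $x$- and $t$-Fourier transforms and the $t$- and $\xi$-integrations --- is handled by a routine approximation with Schwartz data, or bypassed entirely by first recording the convolution representation $\widehat{u}(t,\xi)=c\int|t-s|^{-3/4}e^{-i(t-s)|\xi|^2}\widehat{w}(s,\xi)\,ds$ with $w:=p(\bd_t,\grad_x)u$, applying Minkowski in $s$, and closing with the one-dimensional Hardy--Littlewood--Sobolev inequality $\llp{|t|^{-3/4}\ast g}_{L^4(dt)}\lesssim\llp{g}_{L^2(dt)}$ in place of the Sobolev embedding.
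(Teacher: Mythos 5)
Your proof is correct, but it takes a genuinely different route from the paper's. The paper runs a $TT^\ast$ argument: it shows that the operator with spacetime Fourier multiplier $|\tau+|\xi|^2|^{-1/4}$ maps $L^2_{t,x}$ to $L^4_tL^2_x$ by computing the kernel of $TT^\ast$, bounding its partial Fourier transform by $|t-s|^{-1/2}$ uniformly in $\xi$, and closing with the Hardy--Littlewood--Sobolev inequality $L^{4/3}_t\to L^4_t$. You instead conjugate out the Schr\"odinger phase so that the multiplier becomes, fiberwise in $\xi$, a purely temporal fractional integration of order $1/4$, and you conclude with Minkowski's inequality (licit since $2\le 4$) plus the one-dimensional embedding $\dot{H}^{1/4}(\rr)\hookrightarrow L^4(\rr)$; all of these steps check out, including the change of variables $\tau\mapsto\tau-|\xi|^2$ that identifies the two sides. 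The two arguments rest on the same mechanism --- shifted fractional integration in time --- but yours is the direct ``$T$'' version while the paper's is the ``$TT^\ast$'' version, and your closing remark about the convolution representation with kernel $|t-s|^{-3/4}$ followed by HLS is precisely the bridge between them. What your version buys is transparency: it makes clear where the exponent $1/4$ and the pair $(4,2)$ come from, and it dispenses with the $TT^\ast$ principle entirely. What the paper's version buys is the standard template that persists in mixed-norm situations where a clean fiberwise reduction is unavailable. You are also right that the hypothesis $\vect{S}\Lambda=0$ plays no role beyond guaranteeing that $\Lambda(t,x,x)$ is a function to which the Fourier-side manipulations apply; the paper's proof likewise establishes the unconditional multiplier inequality.
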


\begin{proof}
We prove the above estimate using a $TT^\ast$ argument. Consider $T: L^2_{t, x} \rightarrow L^4_tL^2_x$ defined by
\begin{align*}
 (T F)(t, x) = \left(\frac{\widetilde F}{|\tau+|\xi|^2|^{1/4}}\right)^\vee := \frac{1}{2\pi} \int d\xi\ e^{i(\xi\cdot x+\tau t)} \frac{\widetilde{F}(\tau, \xi)}{|\tau+|\xi|^2|^{1/4}}
\end{align*}
then we see that $TT^\ast: L^{4/3}_tL^2_x \rightarrow L^4_tL^2_x$ is given by 
\begin{align*}
 TT^\ast F = \left(\frac{\widetilde F}{|\tau+|\xi|^2|^{1/2}}\right)^\vee 
= F\ast\left(\frac{1}{|\tau+|\xi|^2|^{1/2}} \right)^\vee =: F\ast K.
\end{align*}
By triangle inequality and Plancherel, we obtain the estimate
\begin{align*}
 \Lp{K\ast F(t, \cdot)}{L^2(dx)} 
\leq&\ \int ds\ \llp{\hat K(t-s,\xi) \hat F(s, \xi)}_{L^2(d\xi)} 
\lesssim\  \int ds\ \frac{1}{|t-s|^{1/2}}\llp{\hat F(s,\cdot)}_{L^2(d\xi)}
\end{align*}
since we have
\begin{align*}
|\hat K(t-s,\xi)|
=&\ \left|\int^\infty_{-\infty} e^{i\tau(t-s)} \frac{e^{i|\xi|^2(t-s)}}{|\tau|^{1/2}}\ d\tau \right|\lesssim \frac{1}{|t-s|^{1/2}}
\end{align*}
which is independent of $\xi$.
Thus, it follows 
\begin{align*}
 \llp{TT^\ast F}_{L^4(dt)L^2(dx)} \lesssim \Lp{\int^\infty_{-\infty} ds\ \frac{\llp{\hat F(s, \cdot)}_{L^2(d\xi)}}{|t-s|^{1/2}}}{L^4(dt)}.
\end{align*}
Now, apply Hardy-Littlewood-Sobolev inequality  $\frac{n}{p}=\frac{n}{q}-n+\alpha$, with $n=1, p=4/3$ and $q=4$
we have that 
\begin{align*}
 \Lp{\int^\infty_{-\infty} ds\ \frac{\llp{\hat F(s, \cdot)}_{L^2(d\xi)}}{|t-s|^{1/2}}}{L^4(dt)} \lesssim \llp{F}_{L^{4/3}(dt)L^2(dx)}
\end{align*}
which means $TT^\ast$ is a bounded operator. Hence it follows from the $TT^\ast$ principle that $T$ is also a bounded operator, i.e.
\begin{align*}
 \llp{TF}_{L^4(dt)L^2(dx)} \lesssim \llp{F}_{L^2(dtdx)}
\end{align*}
or equivalently
\begin{align*}
 \llp{F}_{L^4(dt)L^2(dx)} \lesssim \llp{|\tau+|\xi|^2|^{1/4}\tilde F(\tau, \xi)}_{L^2(d\tau d\xi)}.
\end{align*}
\end{proof}

As an immediate corollary of Proposition \ref{weird-deriv}, we have that
\begin{cor}
 Suppose $\Lambda$ solves $\vect{S}\Lambda = 0$, then for every $0<\varepsilon<1$ we have
\begin{align}
 \llp{\grad_x^\varepsilon\Lambda(t, x, x)}_{L^4(dt)L^2(dx)}
\lesssim \llp{\grad_{x+y}^\varepsilon\Lambda_0}_{L^2(dxdy)}
\end{align}
where $\grad_{x+y}\Lambda:=\frac{1}{2}(\grad_x\Lambda +\grad_y\Lambda) $. 
\end{cor}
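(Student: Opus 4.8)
The plan is to obtain this as an immediate consequence of Proposition \ref{weird-deriv} and Proposition \ref{Lambda-collapse} by conjugating the evolution with a Fourier multiplier. First I would combine those two propositions into the single estimate
\begin{align*}
\llp{\Lambda(t, x, x)}_{L^4(dt)L^2(dx)} \lesssim \llp{p(\bd_t, \grad_x)\Lambda(t, x, x)}_{L^2(dtdx)} \lesssim \llp{\Lambda_0}_{L^2(dxdy)},
\end{align*}
valid for every solution of $\vect{S}\Lambda = 0$; here the first inequality is Proposition \ref{weird-deriv} and the second is Proposition \ref{Lambda-collapse}. It then remains only to transfer $\varepsilon$ derivatives in $x$ from the diagonal restriction onto the initial datum.

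The key point is that, on the diagonal $x = y$, the frequency dual to the single spatial variable $x$ is the sum $\xi_1 + \xi_2$ of the two original frequencies, so differentiating the trace in $x$ is reproduced by the operator $\grad_{x+y} = \tfrac{1}{2}(\grad_x + \grad_y)$ acting on the full kernel. Concretely, I would set $\Lambda_0^\sharp := \grad_{x+y}^\varepsilon \Lambda_0$, so that $\widehat{\Lambda_0^\sharp}(\xi_1, \xi_2) = |\tfrac{\xi_1 + \xi_2}{2}|^\varepsilon \, \widehat{\Lambda_0}(\xi_1, \xi_2)$, and let $\Lambda^\sharp := e^{it(\lapl_x + \lapl_y)}\Lambda_0^\sharp$ be the corresponding solution of $\vect{S}\Lambda^\sharp = 0$, which is legitimate because a Fourier multiplier commutes with the free evolution. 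Rerunning the computation in the proof of Proposition \ref{Lambda-collapse}, the extra weight evaluates to $|\tfrac{(\xi-\eta)+\eta}{2}|^\varepsilon = |\tfrac{\xi}{2}|^\varepsilon$, which is independent of the integration variable $\eta$ and hence factors out of the integral, giving
\begin{align*}
\widetilde{\Lambda^\sharp(t, x, x)}(\tau, \xi) = \left|\tfrac{\xi}{2}\right|^\varepsilon \widetilde{\Lambda(t, x, x)}(\tau, \xi), \qquad \text{i.e.} \qquad \Lambda^\sharp(t, x, x) = 2^{-\varepsilon}\grad_x^\varepsilon \Lambda(t, x, x).
\end{align*}
Applying the composed estimate above to $\Lambda^\sharp$ then yields
\begin{align*}
\llp{\grad_x^\varepsilon \Lambda(t, x, x)}_{L^4(dt)L^2(dx)} = 2^\varepsilon \llp{\Lambda^\sharp(t, x, x)}_{L^4(dt)L^2(dx)} \lesssim \llp{\Lambda_0^\sharp}_{L^2(dxdy)} = \llp{\grad_{x+y}^\varepsilon \Lambda_0}_{L^2(dxdy)},
\end{align*}
which is the assertion.

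I expect essentially no analytic obstacle here: the argument is pure symbol bookkeeping, the one substantive observation being that restriction to the diagonal sends the frequency pair $(\xi_1, \xi_2)$ to $\xi_1 + \xi_2$. The only mild technicality is that for a generic Schwartz $\Lambda_0$ the multiplied datum $\Lambda_0^\sharp$ need not be Schwartz, since $|\xi_1 + \xi_2|^\varepsilon$ fails to be smooth on the hyperplane $\xi_1 + \xi_2 = 0$; this is handled by the standard density argument, proving the estimate first for smooth truncations of the homogeneous multiplier and then passing to the limit. Finally, the restriction $0 < \varepsilon < 1$ plays no role in the proof and is retained only because that is the range that will be needed in the sequel.
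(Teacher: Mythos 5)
Your argument is correct, and it shares the paper's one substantive observation --- that $\grad_{x+y}$ commutes with $\vect{S}$ (equivalently, with the free flow $e^{it(\lapl_x+\lapl_y)}$) and that restriction to the diagonal converts it into $\tfrac12\grad_x$ acting on the trace, which is exactly the identity (\ref{deriv-id}) --- but the mechanics differ. The paper applies the composed estimate of Propositions \ref{Lambda-collapse} and \ref{weird-deriv} only to the \emph{first-order} derivative $\grad_{x+y}\Lambda$, obtaining the endpoint bound $\llp{\grad_x\Lambda(t,x,x)}_{L^4(dt)L^2(dx)}\lesssim\llp{\grad_{x+y}\Lambda_0}_{L^2(dxdy)}$, and then interpolates this against the $\varepsilon=0$ case to reach fractional $\varepsilon$; this is why the hypothesis $0<\varepsilon<1$ appears. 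You instead push the fractional multiplier $|\tfrac{\xi_1+\xi_2}{2}|^\varepsilon$ directly through the free evolution and through the trace computation, where it factors out as $|\xi/2|^\varepsilon$, giving the fractional analogue of (\ref{deriv-id}) in one step. Each route buys something: the paper's version touches only an honest differential operator, so no care is needed about the symbol's lack of smoothness on $\{\xi_1+\xi_2=0\}$ or about preserving the Schwartz class (the technicality you correctly flag and dispose of by density); your version avoids the interpolation step entirely, makes the frequency bookkeeping explicit, and, as you note, is not actually confined to $\varepsilon<1$. Both are complete proofs.
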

\begin{proof}
 If $\vect{S}\Lambda = 0$, then $\vect{S}\grad_{x+y}\Lambda = 0$. Applying the previous estimate, we obtain the estimate
\begin{align*}
 \llp{(\grad_{x+y}\Lambda)(t, x, x)}_{L^4(dt)L^2(dx)} \lesssim&\ \llp{p(\bd_t, \grad_x)
(\grad_{x+y}\Lambda)(t, x, x)}_{L^2(dtdx)}\\
\lesssim&\ \llp{\grad_{x+y}\Lambda_0}_{L^2(dxdy)}.
\end{align*}
Noting the identity
\begin{align}\label{deriv-id}
 (\grad_{x+y}\Lambda)(t, x, x) = \frac{1}{2}\grad_x\left(\Lambda(t, x, x) \right),
\end{align}
we get the estimate
\begin{align*}
  \llp{\grad_{x}\Lambda(t, x, x)}_{L^4(dt)L^2(dx)} \lesssim \llp{
\grad_{x+y}\Lambda_0}_{L^2(dxdy)}.
\end{align*}
Interpolating the above estimate with the estimate
\begin{align*}
   \llp{\Lambda(t, x, x)}_{L^4(dt)L^2(dx)} \lesssim \llp{\Lambda_0}_{L^2(dxdy)}
\end{align*}
yields the desired result.
\end{proof}

Let us also record the following non-endpoint Strichartz estimate for the homogeneous $\Lambda$ equation
\begin{prop}[Non-endpoint Strichartz]\label{strichartz2}
Suppose $\Lambda$ is a solution to $\vect{S}\Lambda = 0$ with initial condition 
$\Lambda_0$ and $(k, \ell)$ is an admissible pair as defined in Proposition \ref{strichartz1}. Then it follows
\begin{align}
\llp{e^{it\lapl_x}\Lambda_0e^{it\lapl_y}}_{L^k(dt)L^\ell(dx)L^2(dy)}\lesssim \llp{\Lambda_0}_{L^2(dxdy)}.
\end{align}
\end{prop}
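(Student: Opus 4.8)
The plan is to reduce this mixed-norm bound to the classical one-dimensional Strichartz estimate for the free Schr\"odinger propagator on $\rr$, in the spirit of Proposition~\ref{strichartz1}. The solution to $\vect{S}\Lambda=0$ with data $\Lambda_0$ is $\Lambda(t)=e^{it\lapl_x}\Lambda_0e^{it\lapl_y}$, i.e. the free $2$D Schr\"odinger group $e^{it(\lapl_x+\lapl_y)}$ written in kernel form: since the Schr\"odinger kernel is even, this amounts to evolving the first variable of $\Lambda_0(x,y)$ by $e^{it\lapl}$ and the second variable by $e^{it\lapl}$. (This is literally the $\vect{S}_\pm$ computation behind Proposition~\ref{strichartz1}, with $-\lapl_y$ replaced by $\lapl_y$.)

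First I would exploit the tensor structure together with unitarity of $e^{it\lapl_y}$ on $L^2(dy)$. Because $e^{it\lapl_x}$ and $e^{it\lapl_y}$ act on different variables they commute, so $\Lambda(t)=e^{it\lapl_y}\bigl(e^{it\lapl_x}\Lambda_0\bigr)$, and for each fixed $t$ and $x$ the operator $e^{it\lapl_y}$ preserves the $L^2(dy)$ norm. Hence, pointwise in $(t,x)$,
\begin{align*}
\llp{\Lambda(t,x,\cdot)}_{L^2(dy)}=\llp{(e^{it\lapl_x}\Lambda_0)(x,\cdot)}_{L^2(dy)},
\end{align*}
so the left side of the claimed estimate equals $\llp{e^{it\lapl_x}\Lambda_0}_{L^k(dt)L^\ell(dx)L^2(dy)}$, with the $y$-evolution removed.

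Next, for any admissible pair one has $2\le\ell\le\infty$ and $2\le k\le\infty$, so two applications of Minkowski's integral inequality (first exchanging the inner $L^2(dy)$ with $L^\ell(dx)$, then with $L^k(dt)$) give
\begin{align*}
\llp{e^{it\lapl_x}\Lambda_0}_{L^k(dt)L^\ell(dx)L^2(dy)}\le\llp{e^{it\lapl_x}\Lambda_0}_{L^2(dy)L^k(dt)L^\ell(dx)}.
\end{align*}
For a.e.\ fixed $y$, the map $(t,x)\mapsto(e^{it\lapl_x}\Lambda_0)(x,y)$ is the free $1$D Schr\"odinger evolution of $\Lambda_0(\cdot,y)\in L^2(\rr)$, so the standard non-endpoint Strichartz estimate on $\rr^{1+1}$ (the $d=1$ case of the classical estimate via $TT^\ast$ and Christ--Kiselev, cf.\ \textsection 2.3 in \cite{Tao}, as already invoked for Proposition~\ref{strichartz1}) yields $\llp{e^{it\lapl_x}\Lambda_0(\cdot,y)}_{L^k(dt)L^\ell(dx)}\lesssim\llp{\Lambda_0(\cdot,y)}_{L^2(dx)}$ with constant independent of $y$. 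Taking the $L^2(dy)$ norm and using Fubini, $\llp{\Lambda_0(\cdot,y)}_{L^2(dx)}$ in $L^2(dy)$ is $\llp{\Lambda_0}_{L^2(dxdy)}$, which finishes the proof.

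I do not expect any real obstacle: this estimate is softer than the collapsing estimates of the section. The only points needing care are keeping the direction of Minkowski's inequality straight — which requires the inner exponent $2$ to be no larger than the two outer exponents, automatic for admissible pairs and trivial in the extreme cases $(k,\ell)=(\infty,2)$ (pure unitarity) and $(k,\ell)=(4,\infty)$ — and recalling that in one spatial dimension there is no forbidden Strichartz endpoint, so the $d=1$ input holds on the full admissible range. Alternatively, one may skip the reduction and run the $TT^\ast$ argument together with the Christ--Kiselev lemma directly on the mixed-norm space $L^k(dt)L^\ell(dx)L^2(dy)$, precisely as indicated in the proof of Proposition~\ref{strichartz1}.
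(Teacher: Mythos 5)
Your argument is correct, but it is not the route the paper (implicitly) takes: the paper states Proposition \ref{strichartz2} without proof, the intended argument being the same mixed-norm $TT^\ast$/Christ--Kiselev scheme invoked for Proposition \ref{strichartz1}. Your primary argument instead reduces the statement to the scalar one-dimensional Strichartz estimate: you strip off the $y$-evolution using unitarity of $e^{it\lapl_y}$ on $L^2(dy)$ pointwise in $(t,x)$, swap $L^2(dy)$ to the outside by Minkowski (legitimate since $k\geq 4$ and $\ell\geq 2$ for every admissible pair here), and then apply the classical $d=1$ estimate to $e^{it\lapl}[\Lambda_0(\cdot,y)]$ for each fixed $y$. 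All three steps check out, including the identification of the operator composition $e^{it\lapl_x}\Lambda_0 e^{it\lapl_y}$ with $e^{it(\lapl_x+\lapl_y)}$ acting on the kernel (via symmetry of the free propagator), and your observation that in one spatial dimension the whole admissible range $(k,\ell)\in[4,\infty]\times[2,\infty]$ is covered by the classical theorem. What your reduction buys is that you only need the homogeneous scalar estimate as a black box --- no Christ--Kiselev is needed for the homogeneous bound in any case --- and the constant is manifestly that of the 1D estimate; what the direct mixed-norm $TT^\ast$ argument buys is that it extends without change to the inhomogeneous retarded estimates of Proposition \ref{strichartz3}, whereas your unitarity trick is specific to the homogeneous flow. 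Either proof is acceptable here.
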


\begin{prop}\label{X-dual-est}
For any number $1^+>1$ and arbitrarily close to $1$ there exists $\delta>0$ such that the following estimate holds
\begin{align}\label{dual-X-est}
 \llp{F}_{X^{-1/2+\delta}} \lesssim T^\text{some power}\llp{F}_{L^2[0, T]L^{1^+}(dx)L^2(dy)}.
\end{align}
\end{prop}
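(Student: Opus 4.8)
The plan is to argue by duality, exploiting that the only non-trivial input available for the operator $\vect{S}$ on the $L^2$ level is the non-endpoint Strichartz estimate of Proposition \ref{strichartz2}. Since $X^{-1/2+\delta}$ is, on the spacetime Fourier side, a weighted $L^2$-space, it is the dual of $X^{1/2-\delta}$ relative to the $L^2(dtdxdy)$ pairing, so that
\[
\llp{F}_{X^{-1/2+\delta}} \sim \sup\Big\{\, |\inprod{F}{g}_{L^2(dtdxdy)}| \ :\ \llp{g}_{X^{1/2-\delta}} \leq 1 \,\Big\},
\]
and by density it suffices to test against Schwartz $g$ supported in time in $[0,T]$. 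Writing $Q=(1^+)'$ for the H\"older conjugate of $1^+$, so that $Q$ is large when $1^+$ is close to $1$, Cauchy--Schwarz in $y$ followed by H\"older in $x$ and in $t$ gives
\[
|\inprod{F}{g}_{L^2(dtdxdy)}| \ \leq\ \llp{F}_{L^2[0,T]L^{1^+}(dx)L^2(dy)}\ \llp{g}_{L^2[0,T]L^{Q}(dx)L^2(dy)}.
\]
Hence the whole matter reduces to establishing the embedding $\llp{g}_{L^2[0,T]L^{Q}(dx)L^2(dy)} \lesssim T^{\text{some power}}\,\llp{g}_{X^{1/2-\delta}}$ for a suitable $\delta=\delta(Q)>0$.

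To obtain this embedding I would interpolate two endpoint facts. The trivial one is $\llp{g}_{L^2(dtdxdy)}=\llp{g}_{X^0}$. The other is obtained by combining the admissible pair $(4,\infty)$ in Proposition \ref{strichartz2} with the standard transference principle for $X^{s,b}$-spaces (cf. \textsection 2.6 of \cite{Tao}), which yields $\llp{g}_{L^4(dt)L^\infty(dx)L^2(dy)}\lesssim \llp{g}_{X^{1/2+\delta_0}}$ for every $\delta_0>0$. Complex interpolation of these at parameter $\theta=1-2/Q$ — the $L^2(dy)$ slot being untouched — produces
\[
\llp{g}_{L^{p}(dt)L^{Q}(dx)L^2(dy)}\ \lesssim\ \llp{g}_{X^{\theta(1/2+\delta_0)}},\qquad \frac1p=\frac{1-\theta}{2}+\frac{\theta}{4}=\frac14+\frac1{2Q},
\]
and the choice $\delta_0=\tfrac{1}{2(Q-2)}$ makes the exponent on the right equal $\theta(\tfrac12+\delta_0)=\tfrac12-\tfrac1{2Q}=:\tfrac12-\delta$, which is strictly below $\tfrac12$ precisely because $Q<\infty$, i.e. because $1^+>1$. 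Since $p=\tfrac{4Q}{Q+2}>2$ and $[0,T]$ is a finite interval, H\"older in time gives $\llp{g}_{L^2(dt)L^{Q}(dx)L^2(dy)}\leq T^{1/4-1/(2Q)}\llp{g}_{L^{p}(dt)L^{Q}(dx)L^2(dy)}$, and the embedding follows with $\delta=\tfrac1{2Q}$ and $T$-power $\tfrac14-\tfrac1{2Q}$. Feeding this back into the duality bound proves (\ref{dual-X-est}).

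The genuinely essential point, and the reason $1^+=1$ must be excluded, is that the transference step requires $b>\tfrac12$ strictly: the small gain of integrability from using $L^{1^+}(dx)$ in place of $L^1(dx)$ is exactly what the interpolation converts into the positive room $\delta$ below $\tfrac12$. The remaining items are routine bookkeeping: justifying the reduction to time-supported functions via a cutoff (so that transference costs no power of $T$ and every $T$-power enters only through the final H\"older-in-time step), and checking that the $X^{0,b}$-spaces, being weighted $L^2$-spaces, form a complex interpolation scale with $[X^{0,b_0},X^{0,b_1}]_\theta=X^{0,(1-\theta)b_0+\theta b_1}$ that interpolates correctly against the mixed Lebesgue norms.
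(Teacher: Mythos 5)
Your proof is correct and follows essentially the same route as the paper: both arguments rest on the transference estimate $\llp{g}_{L^4(dt)L^\infty(dx)L^2(dy)}\lesssim\llp{g}_{X^{1/2+\delta_0}}$ from Proposition \ref{strichartz2} together with the trivial identity $X^{0}=L^2$, interpolation between these two endpoints, and a H\"older-in-time step to trade integrability for a power of $T$. The only cosmetic difference is that you dualize first and interpolate the embeddings $X^{1/2-\delta}\hookrightarrow L^2L^QL^2$, whereas the paper first takes the dual Strichartz estimate $\llp{F}_{X^{-1/2-\delta}}\lesssim T^{1/4}\llp{F}_{L^2[0,T]L^1(dx)L^2(dy)}$ and interpolates on the negative side; these are equivalent.
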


\begin{proof}
 By Proposition \ref{strichartz2} and Lemma 2.9 in \cite{Tao}, we have the estimate
\begin{align}\label{dispersive1}
 \llp{F}_{L^4[0,T]L^\infty(dx)L^2(dy)} \lesssim \llp{F}_{X^{1/2+\delta}}
\end{align}
for all $\delta>0$. Moreover, from (\ref{dispersive1}) we also get the dual estimate
\begin{align}\label{dual-dispersive1}
 \llp{F}_{X^{-1/2-\delta}} \lesssim \llp{F}_{L^{4/3}[0, T]L^1(dx)L^2(dy)} 
\lesssim T^{1/4}\llp{F}_{L^2[0, T]L^1(dx)L^2(dy)}.
\end{align}
By linearly interpolating (\ref{dual-dispersive1}) with
\begin{align*}
 \llp{F}_{X^{-1/2+1/2}} = \llp{F}_{L^2[0, T]L^2(dx)L^2(dy)}
\end{align*}
yields
\begin{align*}
 \llp{F}_{X^{-1/2+\lambda}} \lesssim T^\text{some power}\llp{F}_{L^2[0, T]L^{1^+}(dx)L^2(dy)}
\end{align*}
for $-\delta<\lambda<\frac{1}{2}$ and some number $1^+$ depending on $\lambda$. 
In particular, for any number $1^+$ arbitrarily close to $1$
we can choose $\delta$ sufficiently small such that (\ref{dual-X-est}) holds.  
\end{proof}

\begin{remark}
Let us make the observation: since 
\begin{align}
|\xi +\eta|^2+|\xi-\eta|^2 = 2|\xi|^2+2|\eta|^2
\end{align}
then we also have the estimate
\begin{align*}
 \llp{F}_{X^{-1/2+\delta}} \lesssim T^\text{some power}\llp{F}_{L^2[0, T]L^{1^+}(d(x-y))L^2(d(x+y))}.
\end{align*}

\end{remark}

\section{Inhomogeneous $\Lambda$ Equation}
The main result in this section is Corollary \ref{main-cor} which allows us to obtain a collapsing-type estimate for equation (\ref{nl-lambda}) and essentially show that $N^{-1}v_N\Lambda$, mentioned in the previous section, can be viewed as a uniformly in $N$ perturbation of equation (\ref{nl-Lam}). 

Consider the inhomogeneous equation
\begin{align}\label{nl-Lam}
\vect{S}\Lambda = F
\end{align}
then it follows from the $X^{s, b}$ energy estimate\footnote{cf. \cite{Tao} section 2.6} and
Proposition \ref{X-dual-est} that we have
\begin{align*}
 \llp{\Lambda(t, x, x)}_{L^4(dt)L^2(dx)}
\lesssim&\ \llp{\Lambda_0}_{L^2(dxdy)} + \llp{F}_{X^{-1/2+\delta}}\\
\lesssim&\ \llp{\Lambda_0}_{L^2(dxdy)} + T^\text{some power}\llp{F}_{L^{2}(dt)L^{1^+}(d(x-y))L^2(d(x+y))}
\end{align*}
Summarizing the above result we obtain the following proposition
\begin{prop}\label{Inhomog-eq2}
Suppose $\Lambda$ solves $\vect{S} \Lambda = F$, then we have 
\begin{align}
  \llp{\Lambda(t, x, x)}_{L^4(dt)L^2(dx)} \lesssim  \llp{\Lambda_0}_{L^2(dxdy)} 
+ T^\text{some power}\llp{F}_{L^{2}(dt)L^{1^+}(d(x-y))L^2(d(x+y))}.
\end{align}
\end{prop}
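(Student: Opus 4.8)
The plan is to combine the standard $X^{s,b}$ energy inequality for the Schr\"odinger operator $\vect{S}$ with the collapsing estimate of Proposition \ref{Lambda-collapse}, Proposition \ref{weird-deriv}, and the dual Strichartz bound of Proposition \ref{X-dual-est}. First I would write the Duhamel representation for the solution to $\vect{S}\Lambda = F$ with data $\Lambda_0$, namely $\Lambda(t) = e^{it(\lapl_x+\lapl_y)}\Lambda_0 + i\int_0^t e^{i(t-s)(\lapl_x+\lapl_y)}F(s)\,ds$, and recall the $X^{0,b}$ energy estimate (cf. \cite{Tao}, \textsection 2.6): for $b>1/2$,
\begin{align*}
\llp{\Lambda}_{X^{b}} \lesssim \llp{\Lambda_0}_{L^2(dxdy)} + \llp{F}_{X^{b-1}}.
\end{align*}
Taking $b = 1/2+\delta$ with $\delta>0$ small, this gives control of $\llp{\Lambda}_{X^{1/2+\delta}}$ by $\llp{\Lambda_0}_{L^2}$ plus $\llp{F}_{X^{-1/2+\delta}}$.

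Next I would bound the trace $\Lambda(t,x,x)$ in $L^4(dt)L^2(dx)$ by the $X^{1/2+\delta}$ norm. The cleanest route is to observe that Proposition \ref{weird-deriv} together with Proposition \ref{Lambda-collapse} shows, for the \emph{homogeneous} flow, that the trace map $\Lambda_0 \mapsto \Lambda(t,x,x)$ sends $L^2(dxdy)$ into $L^4(dt)L^2(dx)$; by the usual transference principle (Lemma 2.9 in \cite{Tao}), a space-time estimate valid for the free evolution upgrades to $\llp{u(t,x,x)}_{L^4(dt)L^2(dx)} \lesssim \llp{u}_{X^{1/2+\delta}}$ for general $u$ in the Bourgain space. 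Applying this to $u=\Lambda$ yields
\begin{align*}
\llp{\Lambda(t,x,x)}_{L^4(dt)L^2(dx)} \lesssim \llp{\Lambda_0}_{L^2(dxdy)} + \llp{F}_{X^{-1/2+\delta}}.
\end{align*}
Finally, I would invoke Proposition \ref{X-dual-est} — in the form recorded in the Remark immediately after it, using the change of variables $|\xi+\eta|^2+|\xi-\eta|^2 = 2|\xi|^2+2|\eta|^2$ so that the dual estimate can be phrased in the $d(x-y), d(x+y)$ variables — to get
\begin{align*}
\llp{F}_{X^{-1/2+\delta}} \lesssim T^\text{some power}\llp{F}_{L^2(dt)L^{1^+}(d(x-y))L^2(d(x+y))},
\end{align*}
and chaining the three displays gives exactly the claimed inequality.

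The main obstacle, and the only genuinely non-routine point, is the transference step: one must check that the trace-type estimate for the free propagator (which is what Propositions \ref{Lambda-collapse} and \ref{weird-deriv} together deliver) is of the right form to be lifted to the $X^{s,b}$ setting, i.e. that the composition ``restrict to the diagonal, then take $L^4_tL^2_x$'' is a bounded linear operation on the free-evolution family with a bound depending only on the $L^2$ norm of the data. Once that is granted, everything else is bookkeeping: tracking the exponent $\delta$ so that Proposition \ref{X-dual-est} applies with the desired near-$L^1$ exponent $1^+$, and absorbing the finite-time factors $T^\text{some power}$ coming from H\"older in $t$ on the bounded interval $[0,T]$. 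I do not expect any difficulty with the energy estimate itself, which is entirely standard for $\vect{S}$.
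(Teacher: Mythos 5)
Your proposal is correct and follows essentially the same route as the paper: the $X^{s,b}$ energy estimate, transference of the free-flow trace bound (obtained by chaining Propositions \ref{Lambda-collapse} and \ref{weird-deriv}) to the $X^{1/2+\delta}$ setting via Lemma 2.9 of \cite{Tao}, and then Proposition \ref{X-dual-est} to convert $\llp{F}_{X^{-1/2+\delta}}$ into the mixed Lebesgue norm. The paper's own proof is just a terse two-line chain of inequalities, and the transference step you flag as the non-routine point is exactly the one the paper spells out later in the proof of Proposition \ref{trace-energy-lam}.
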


Using the above proposition, we establish the following proposition
\begin{prop}\label{trace-energy-lam}
 Suppose $\Lambda$ solves (\ref{nl-lambda})
with initial condition $\Lambda_0$. Then we have
\begin{align}\label{trace-energy-est}
\llp{\Lambda(t, x, x)}_{L^4(dt)L^2(dx)} \lesssim \llp{\Lambda_0}_{L^2(dxdy)}+\llp{F}_{X^{-1/2+\delta}}.
\end{align}
\end{prop}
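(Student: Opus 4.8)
The plan is to treat $N^{-1}v_N(x-y)\Lambda$ in (\ref{nl-lambda}) as an inhomogeneous term for the free operator $\vect{S}$ and to bootstrap off Proposition \ref{Inhomog-eq2}. First I would rewrite (\ref{nl-lambda}) as
\[
\vect{S}\Lambda = F - \tfrac{1}{N}v_N(x-y)\Lambda,
\]
and apply the free trace estimate that underlies the proof of Proposition \ref{Inhomog-eq2}, namely $\llp{\Psi(t,x,x)}_{L^4(dt)L^2(dx)}\lesssim\llp{\Psi_0}_{L^2(dxdy)}+\llp{H}_{X^{-1/2+\delta}}$ whenever $\vect{S}\Psi=H$ — this is precisely how Proposition \ref{Lambda-collapse} and Proposition \ref{weird-deriv} combine, after the standard transfer to $X^{s,b}$, with Proposition \ref{X-dual-est}. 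Applied to $\Psi=\Lambda$, $H=F-N^{-1}v_N(x-y)\Lambda$, this gives
\[
\llp{\Lambda(t,x,x)}_{L^4(dt)L^2(dx)}\lesssim\llp{\Lambda_0}_{L^2(dxdy)}+\llp{F}_{X^{-1/2+\delta}}+\tfrac{1}{N}\llp{v_N(x-y)\Lambda}_{X^{-1/2+\delta}},
\]
so the entire problem reduces to controlling the last term uniformly in $N$.

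For that term I would invoke the dual Strichartz estimate in the relative and center-of-mass variables recorded in the Remark following Proposition \ref{X-dual-est}:
\[
\tfrac{1}{N}\llp{v_N(x-y)\Lambda}_{X^{-1/2+\delta}}\lesssim\tfrac{1}{N}\,T^{\text{some power}}\,\llp{v_N(x-y)\Lambda}_{L^2(dt)L^{1^+}(d(x-y))L^2(d(x+y))}.
\]
Since $v_N$ depends only on $x-y$, H\"older in that variable peels it off, leaving $\llp{v_N}_{L^{1^+}(\rr)}\lesssim N^{\beta(1-1/1^+)}$ times a norm of $\Lambda$ in which the relative variable is measured in an exponent close to $\infty$ and the center-of-mass variable in $L^2$. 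The key point is that $\llp{v_N}_{L^1(\rr)}=\llp{v}_{L^1(\rr)}$ is independent of $N$, so by choosing $\delta=\delta(\beta)$ small enough the growth $N^{\beta(1-1/1^+)}$ is strictly slower than $N$, and the prefactor $N^{-1}$ makes the potential a genuinely small perturbation for $N$ large. Using that $v_N(x-y)$ only tests $\Lambda$ in an $N^{-\beta}$-neighborhood of the diagonal, one then bounds the resulting $\Lambda$-norm by $\llp{\Lambda(t,x,x)}_{L^4(dt)L^2(dx)}$ (together with lower-order contributions that vanish on the diagonal and carry an extra damping factor $\llp{v_N(x-y)|x-y|}_{L^1}\sim N^{-\beta}$), and absorbs it into the left-hand side; the remaining bounded range of $N$ is handled by iterating on short time subintervals, which only affects the constant through its dependence on $T$. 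This yields (\ref{trace-energy-est}).

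The main obstacle is precisely this last step: bounding $N^{-1}\llp{v_N(x-y)\Lambda}_{X^{-1/2+\delta}}$ uniformly in $N$ without spending regularity on $\Lambda$. One cannot place $v_N$ in $L^1(d(x-y))$ — only in $L^{1^+}$, at the cost of a slowly growing power of $N$ — and simultaneously keep the off-diagonal behaviour of $\Lambda$ under control; the resolution hinges on the $N^{-1}$ gain, the $N$-independence of $\llp{v_N}_{L^1}$, and the $N^{-\beta}$-concentration of $v_N$, and this is exactly where the compatibility between the size of $\delta$ (equivalently $\sigma$) and the scaling parameter $\beta$ — the same phenomenon as the condition $1-\beta\sigma>0$ noted after Theorem \ref{main} — comes into play. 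Everything else (the $X^{s,b}$ energy estimate, the dual Strichartz bounds, the passage to relative/center-of-mass coordinates) is routine given the results of \textsection 6.
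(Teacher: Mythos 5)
Your reduction of (\ref{nl-lambda}) to the free equation with forcing $F-\tfrac{1}{N}v_N(x-y)\Lambda$, the passage to relative/center-of-mass coordinates via the Remark after Proposition \ref{X-dual-est}, the H\"older step producing $\llp{v_N}_{L^{1^+}}\sim N^{\beta(1-1/1^+)}$, and the observation that $1-\beta+\beta/1^+>0$ for $1^+$ close to $1$ all match the paper. The gap is in the absorption step. After H\"older you are left with $\llp{\Lambda}_{L^{2}(dt)L^\infty(d(x-y))L^2(d(x+y))}$, i.e.\ a supremum over the relative variable $z$ of $\llp{\Lambda(t,\cdot+z,\cdot)}_{L^2}$, and you propose to dominate this by the diagonal trace $\llp{\Lambda(t,x,x)}_{L^4(dt)L^2(dx)}$ using the $N^{-\beta}$-concentration of $v_N$ plus a Taylor remainder. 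That step does not close: the difference $\Lambda(t,x,x-z)-\Lambda(t,x,x)$ costs a full derivative of $\Lambda$ in the relative direction, which is not controlled by the left-hand side of (\ref{trace-energy-est}) and reintroduces exactly the regularity expenditure you set out to avoid; the damping $\llp{v_N(z)\,|z|}_{L^1}\sim N^{-\beta}$ cannot compensate such a derivative uniformly in $N$ for large $\beta$. In short, the bootstrap cannot be run at the level of the trace norm alone, because the potential term genuinely sees off-diagonal values of $\Lambda$.

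The paper's resolution is to promote the bootstrapped quantity from the trace norm to $\llp{\bigchi(t)\Lambda}_{X^{1/2+\delta}}$, where $\bigchi$ is a time cutoff. The $X^{s,b}$ energy estimate gives $\llp{\bigchi(t)\Lambda}_{X^{1/2+\delta}}\lesssim \llp{\Lambda_0}_{L^2(dxdy)}+\llp{F}_{X^{-1/2+\delta}}+\tfrac{1}{N}\llp{v_N\bigchi(t)\Lambda}_{X^{-1/2+\delta}}$, and the off-diagonal norm $\llp{\bigchi(t)\Lambda}_{L^4(dt)L^\infty(d(x-y))L^2(d(x+y))}$ emerging from the H\"older step is itself $\lesssim\llp{\bigchi(t)\Lambda}_{X^{1/2+\delta}}$ by the transferred Strichartz estimate (Proposition \ref{strichartz2} with Lemma 2.9 of \cite{Tao}, using $|\xi+\eta|^2+|\xi-\eta|^2=2|\xi|^2+2|\eta|^2$ to rotate coordinates). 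The potential term is therefore absorbed into the full $X^{1/2+\delta}$ norm with prefactor $N^{-(1-\beta+\beta/1^+)}T^{\text{some power}}$, no regularity is spent, and (\ref{trace-energy-est}) follows a posteriori from $\llp{\Lambda(t,x,x)}_{L^4(dt)L^2(dx)}\lesssim\llp{\bigchi(t)\Lambda}_{X^{1/2+\delta}}$, which is Proposition \ref{weird-deriv} transferred to the $X^{s,b}$ setting. You should restructure your argument around this stronger intermediate quantity; the remaining ingredients of your proposal then go through as written.
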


\begin{proof}
Since by Proposition \ref{weird-deriv} we have
\begin{align*}
\Lp{e^{it\lapl_x}\Lambda_0 e^{it\lapl_y}\big|_{x=y}}{L^2(dtdx)} \lesssim \llp{\Lambda_0}_{L^2(dxdy)},
\end{align*}
then it follows from Lemma 2.9 in \cite{Tao}. 
\begin{align*}
\llp{F}_{L^4(dt)L^2(dx)}\lesssim \llp{F}_{X^{1/2+\delta}}
\end{align*}
for any $\delta>0$.
In particular, applying the $X^{s,b}$ energy estimate we get that
\begin{align*}
\llp{\bigchi(t) \Lambda}_{X^{1/2+\delta}} \lesssim&\ \llp{\frac{1}{N}v_N\bigchi(t)\Lambda}_{X^{-1/2+\delta}}
\ + \llp{F}_{X^{-1/2+\delta}}+\llp{\Lambda_0}_{L^2(dxdy)}
\end{align*}
where $\chi(t)$ is a time localization bump function. 
 Applying Proposition \ref{X-dual-est}, we see that
\begin{align*}
 \frac{1}{N}\llp{v_N(x-y)\Lambda}_{X^{-1/2+\delta}} \lesssim&\ \frac{1}{N}T^\text{some power}\llp{v_N\bigchi(t)\Lambda}_{L^{2}(dt)L^{1^+}(d(x-y))L^2(d(x+y))}\\
\lesssim&\ \frac{1}{N}T^\text{some power}\llp{v_N}_{L^{1^+}(d(x-y))}\llp{\bigchi(t)\Lambda}_{L^{2}(dt)L^\infty(d(x-y))L^2(d(x+y))}\\
\lesssim&\ \frac{1}{N^{1-\beta+\beta/(1^+)}} T^{\text{some power}}\llp{\bigchi(t)\Lambda}_{L^4(dt)L^\infty(d(x-y))L^2(d(x+y))}\\
\lesssim&\ \frac{1}{N^{1-\beta+\beta/(1^+)}} T^{\text{some power}}\llp{\bigchi(t)\Lambda}_{X^{1/2+\delta}}
\end{align*}
Hence for $1^+$ sufficiently close to $1$ we are in the perturbative regime.
 This allows us to absorb the contribution from the potential 
term $\frac{1}{N}v_N(x-y)\Lambda$ when $N$ is sufficiently large. 
\end{proof}

Using the above proposition we could show that
\begin{cor}\label{main-cor}
 Suppose $\Lambda$ solves (\ref{nl-lambda})
with initial condition $\Lambda_0$. Then for every $0<\sigma<1/2$ we have
\begin{align}
\llp{\grad_{x}^\sigma\Lambda(t, x, x)}_{L^4(dt)L^2(dx)} \lesssim \llp{\grad_{x+y}^\sigma\Lambda_0}_{L^2(dxdy)}+\llp{\grad_{x+y}^\sigma F}_{X^{-1/2+\delta}}.
\end{align}
\end{cor}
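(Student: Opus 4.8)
The plan is to deduce Corollary \ref{main-cor} from Proposition \ref{trace-energy-lam} in exactly the same way that the earlier corollary to Proposition \ref{weird-deriv} was obtained, namely by differentiating the equation in the symmetric variable $x+y$, applying the collapsing-type estimate at two integer-order regularities, and then interpolating. First I would observe that if $\Lambda$ solves $\left(\vect{S}+\frac{1}{N}v_N(x-y)\right)\Lambda = F$, then applying $\grad_{x+y} := \frac{1}{2}(\grad_x+\grad_y)$ to the equation commutes with $\vect{S}$ and with the multiplication operator $v_N(x-y)$ (since $\grad_{x+y}$ annihilates functions of $x-y$ in the sense that it passes the derivative onto $\Lambda$), so that $\grad_{x+y}\Lambda$ solves $\left(\vect{S}+\frac{1}{N}v_N(x-y)\right)\grad_{x+y}\Lambda = \grad_{x+y}F$ with initial data $\grad_{x+y}\Lambda_0$. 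Hence Proposition \ref{trace-energy-lam} applies verbatim to $\grad_{x+y}\Lambda$, yielding
\begin{align*}
\llp{(\grad_{x+y}\Lambda)(t,x,x)}_{L^4(dt)L^2(dx)} \lesssim \llp{\grad_{x+y}\Lambda_0}_{L^2(dxdy)}+\llp{\grad_{x+y}F}_{X^{-1/2+\delta}}.
\end{align*}

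Next I would invoke the identity $(\grad_{x+y}\Lambda)(t,x,x) = \frac{1}{2}\grad_x\left(\Lambda(t,x,x)\right)$ — this is exactly the identity (\ref{deriv-id}) already used in the proof of the corollary to Proposition \ref{weird-deriv}, and it holds here because restricting to the diagonal and then differentiating in $x$ reproduces the full gradient along the diagonal direction. Combined with the previous display this gives
\begin{align*}
\llp{\grad_x\Lambda(t,x,x)}_{L^4(dt)L^2(dx)} \lesssim \llp{\grad_{x+y}\Lambda_0}_{L^2(dxdy)}+\llp{\grad_{x+y}F}_{X^{-1/2+\delta}},
\end{align*}
i.e. the $\sigma=1$ endpoint of the claimed estimate. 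The $\sigma=0$ endpoint is precisely Proposition \ref{trace-energy-lam} itself, written as
\begin{align*}
\llp{\Lambda(t,x,x)}_{L^4(dt)L^2(dx)} \lesssim \llp{\Lambda_0}_{L^2(dxdy)}+\llp{F}_{X^{-1/2+\delta}}.
\end{align*}

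Finally I would interpolate between these two estimates. Treating the data-to-solution map $(\Lambda_0, F)\mapsto \Lambda(t,x,x)$ as linear (the potential term $\frac{1}{N}v_N\Lambda$ has been absorbed perturbatively for large $N$, so the relevant solution operator is genuinely linear in the data and forcing up to the fixed-point correction), complex interpolation of the pair of estimates at regularities $0$ and $1$ — on the solution side in the homogeneous Sobolev scale $\grad_x^\sigma$ acting on the diagonal, and on the data side in $\grad_{x+y}^\sigma$ acting on $\Lambda_0$ and on $\grad_{x+y}^\sigma F$ in $X^{-1/2+\delta}$ — produces, for every $0<\sigma<1/2$,
\begin{align*}
\llp{\grad_x^\sigma\Lambda(t,x,x)}_{L^4(dt)L^2(dx)} \lesssim \llp{\grad_{x+y}^\sigma\Lambda_0}_{L^2(dxdy)}+\llp{\grad_{x+y}^\sigma F}_{X^{-1/2+\delta}},
\end{align*}
which is the assertion. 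The restriction $\sigma<1/2$ is inherited from the range in which the perturbative absorption of the potential in Proposition \ref{trace-energy-lam} and the companion estimates are valid rather than from the interpolation step itself.

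The main obstacle I anticipate is justifying that $\grad_{x+y}$ interacts cleanly with the perturbed operator: one must check that commuting $\grad_{x+y}$ past $\frac{1}{N}v_N(x-y)$ genuinely leaves no commutator (it does, since $\grad_{x+y}[v_N(x-y)\Lambda] = v_N(x-y)\grad_{x+y}\Lambda$ because $\grad_{x+y}$ kills $v_N(x-y)$), and, more delicately, that the perturbative absorption argument in Proposition \ref{trace-energy-lam} — which bounds $\frac{1}{N}\llp{v_N(x-y)\Lambda}_{X^{-1/2+\delta}}$ by a negative power of $N$ times $\llp{\Lambda}_{X^{1/2+\delta}}$ via $\llp{v_N}_{L^{1^+}(d(x-y))}$ — still closes when run on $\grad_{x+y}\Lambda$ in place of $\Lambda$, using the final remark of \textsection 6 that the $X^{-1/2+\delta}$ norm is also controlled by $L^2(dt)L^{1^+}(d(x-y))L^2(d(x+y))$. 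Granting that, the remaining steps are the routine diagonal-derivative identity and a standard interpolation, so I expect the corollary to follow with little additional work beyond what is already in the section.
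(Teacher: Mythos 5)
Your proposal is correct and follows the paper's own proof essentially verbatim: apply $\grad_{x+y}$ to the equation using $[\grad_{x+y}, N^{-1}v_N(x-y)]=0$, invoke Proposition \ref{trace-energy-lam} for the differentiated equation, convert via the identity (\ref{deriv-id}), and interpolate with the $\sigma=0$ case. Your closing remarks about rechecking the perturbative absorption for $\grad_{x+y}\Lambda$ are a sensible (and slightly more careful) gloss on a step the paper takes for granted.
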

\begin{proof}
Taking the spatial derivative $\grad_{x+y}$ of (\ref{nl-lambda}) yields
\begin{align}\label{xplusy-deriv-lamb}
\left(\vect{S}+\frac{1}{N}v_N(x-y)\right)\grad_{x+y}\Lambda= \grad_{x+y}F
\end{align}
since $[\grad_{x+y}, N^{-1}v_N(x-y)]=0$. Hence by Proposition \ref{trace-energy-lam}, we obtain the estimate
\begin{align*}
\llp{(\grad_{x+y}\Lambda)(t, x, x)}_{L^4(dt)L^2(dx)} \lesssim \llp{\grad_{x+y}\Lambda_0}_{L^2(dxdy)}+\llp{\grad_{x+y}F}_{X^{-1/2+\delta}}.
\end{align*}
Again, noting the identity (\ref{deriv-id}), we obtain the estimate
\begin{align}\label{deriv-trace-energy-est}
\llp{\grad_{x}\Lambda(t, x, x)}_{L^4(dt)L^2(dx)} \lesssim \llp{\grad_{x+y}\Lambda_0}_{L^2(dxdy)}+\llp{\grad_{x+y}F}_{X^{-1/2+\delta}}.
\end{align}
Interpolating (\ref{trace-energy-est}) with (\ref{deriv-trace-energy-est}) yields the desired result. 
\end{proof}

Now, let us record some Strichartz estimates
\begin{prop}\label{strichartz3}
 Suppose $\Lambda$ is a solution to $\vect{S}\Lambda = F$ with initial condition 
$\Lambda_0$ and $(k, \ell), (\widetilde k, \widetilde \ell)$ are Strichartz admissible pairs. Then it follows
\begin{align}
\llp{\Lambda(t, x, y)}_{L^k(dt)L^\ell(dx)L^2(dy)}\lesssim \llp{\Lambda_0}_{L^2(dxdy)}+\llp{F}_{L^{\widetilde k'}(dt)L^{\widetilde \ell'}(dx)L^2(dy)}.
\end{align}
In particular, it follows
\begin{align}
 \llp{\grad^\sigma_{x, y}\Lambda(t, x, y)}_{L^k(dt)L^\ell(dx)L^2(dy)}\lesssim \llp{\grad_{x, y}^\sigma\Lambda_0}_{L^2(dxdy)}+\llp{\grad_{x, y}^\sigma F}_{L^{\widetilde k'}(dt)L^{\widetilde \ell'}(dx)L^2(dy)}.
\end{align}
\end{prop}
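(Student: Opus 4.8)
The plan is to reduce the inhomogeneous estimate to the homogeneous one (Proposition \ref{strichartz2}) via Duhamel's formula together with the $TT^\ast$ principle and the Christ--Kiselev lemma, in exactly the way indicated for Proposition \ref{strichartz1}. Writing the solution of $\vect{S}\Lambda = F$ in Duhamel form,
\[
\Lambda(t,x,y) = e^{it(\lapl_x+\lapl_y)}\Lambda_0 + i\int_0^t e^{i(t-s)(\lapl_x+\lapl_y)}F(s,x,y)\,ds,
\]
the first term is immediately controlled by Proposition \ref{strichartz2}, so only the Duhamel term remains. The structural reason the mixed norm $L^k(dt)L^\ell(dx)L^2(dy)$ is the right one is that $e^{it(\lapl_x+\lapl_y)} = e^{it\lapl_x}\circ e^{it\lapl_y}$ with $e^{it\lapl_y}$ unitary on $L^2(dy)$ and commuting with the $x$-action; hence the propagator may be viewed as the free one-dimensional Schr\"odinger group acting on $L^2(dy)$-valued functions of $x$, and all of \cite{Tao}'s abstract Strichartz machinery applies with the Hilbert space $L^2(dy)$ as the fibre.

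First I would dualize Proposition \ref{strichartz2}: since $L^2(dy)$ is a Hilbert space, duality in the $(t,x)$ variables gives, for every admissible pair $(\widetilde k,\widetilde\ell)$, the bound $\llp{\int_\rr e^{-is(\lapl_x+\lapl_y)}G(s)\,ds}_{L^2(dxdy)} \lesssim \llp{G}_{L^{\widetilde k'}(dt)L^{\widetilde\ell'}(dx)L^2(dy)}$. Composing this with Proposition \ref{strichartz2} itself shows that the non-retarded operator $F\mapsto \int_\rr e^{i(t-s)(\lapl_x+\lapl_y)}F(s)\,ds$ is bounded from $L^{\widetilde k'}(dt)L^{\widetilde\ell'}(dx)L^2(dy)$ into $L^k(dt)L^\ell(dx)L^2(dy)$. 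To pass from the non-retarded integral to the retarded one $\int_0^t$ I would invoke the Christ--Kiselev lemma (\textsection 2.3 in \cite{Tao}), which applies here precisely because the pairs are non-endpoint: $k,\widetilde k\in(2,\infty]$ forces $\widetilde k' < 2 < k$, so the strict inequality $\widetilde k'<k$ required by the lemma holds. Together with the homogeneous bound for $e^{it(\lapl_x+\lapl_y)}\Lambda_0$ this yields the first asserted estimate. For the weighted version, the Fourier multiplier $\grad^\sigma_{x,y}$ acts only in the $(\xi,\eta)$ variables and therefore commutes with both $\vect{S}$ and $e^{it(\lapl_x+\lapl_y)}$; hence $\vect{S}\Lambda = F$ implies $\vect{S}(\grad^\sigma_{x,y}\Lambda) = \grad^\sigma_{x,y}F$ with data $\grad^\sigma_{x,y}\Lambda_0$, and applying the first estimate to $\grad^\sigma_{x,y}\Lambda$ gives the second.

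I do not expect a genuine obstacle here: the argument is the standard non-endpoint inhomogeneous Strichartz proof, and the one point needing a little care is simply that the $TT^\ast$ duality and the Christ--Kiselev lemma are being applied to operators valued in the Hilbert space $L^2(dy)$ rather than to scalar-valued operators. Since that is exactly the generality in which these tools are stated in \cite{Tao}, the only real work is the bookkeeping already performed for Proposition \ref{strichartz1}, now carried along with the extra $L^2(dy)$ factor untouched.
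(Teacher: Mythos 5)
Your proposal is correct and matches the paper's intended argument: the paper records Proposition \ref{strichartz3} without a written proof, implicitly relying on the same standard non-endpoint Strichartz machinery ($TT^\ast$ principle plus the Christ--Kiselev lemma, applied to $L^2(dy)$-valued functions) that it cites for Proposition \ref{strichartz1}, and your Duhamel decomposition, duality step, and the commutation of $\grad^\sigma_{x,y}$ with the propagator are exactly the details being elided.
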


\begin{remark}\label{beta}
  Let us note that Proposition \ref{strichartz3} also holds for solution to (\ref{nl-lambda}) when $N$ is sufficiently large. More specifically, by interpolation, we can show
\begin{align}
\frac{1}{N}\llp{\grad_x^\sigma[v_N(x-y)]\Lambda}_{L^{4/3}[0, T]L^1(d(x-y))L^2(d(x+y))}\lesssim \frac{T^{1/2}}{N^{1-\sigma\beta}} \llp{\Lambda}_{L^{4}[0, T]L^\infty(d(x-y))L^2(d(x+y))}.
\end{align}
Thus, for any $\beta>0$, we  can choose $\sigma=\sigma(\beta)$ so that $1-\sigma\beta>0$.  
\end{remark}

\section{The TDHFB Equations}\label{Proof-of-main}
In this section we prove the local well-posedness of our system of nonlinear equations
addressed in the introduction.
First, let us write down the kernel form of the TDHFB equations
\begin{subequations}
\begin{align}
\left\{\frac{1}{i}\frac{\bd}{\bd t}-\lapl_{x_1}\right\}\varphi_t(x_1)=&\ -\int dy\ \{v_N(x_1-y)\rho_\Gamma(t, y)\}\cdot \varphi_t(x_1) \label{phi-eq}\\
          &\ - \int dy\ \{v_N(x_1-y)(\Gamma_t(y, x_1)-\overline{\varphi}_t(y)\varphi_t(x_1))\varphi_t(y)\} \nonumber \\
           &\ - \int dy\ \{v_N(x_1-y)(\Lambda_t(x_1, y)-\varphi_t(y)\varphi_t(x_1))\overline{\varphi}_t(y)\} \nonumber \\
\left\{ \frac{1}{i}\frac{\bd}{\bd t}-\lapl_{x_1}+\lapl_{x_2}\right\} \Gamma_t &(x_1, x_2) \label{gamma-eq} \\
=&\ - \int dy\ \{(v_N(x_1-y)-v_N(x_2-y))\overline{\Lambda}_t(x_1, y)\Lambda_t(y, x_2)\}\nonumber\\
&\ -\int dy\ \{(v_N(x_1-y)-v_N(x_2-y))\Gamma_t(x_1, y)\Gamma_t(y, x_2)\}\nonumber\\
&\ -\int dy\ \{(v_N(x_1-y)-v_N(x_2-y))\rho_\Gamma(t, y)\Gamma_t(x_1, x_2)\}\nonumber\\
&\ + 2\int dy\ \{(v_N(x_1-y)-v_N(x_2-y))|\varphi_t(y)|^2\overline{\varphi}_t(x_1)\varphi_t(x_2)\}\nonumber
\end{align}
\begin{align}
\bigg\{ \frac{1}{i}\frac{\bd}{\bd t}-\lapl_{x_1}-\lapl_{x_2}&+\frac{1}{N}v_N (x_1-x_2)\bigg\} \Lambda_t (x_1, x_2) \label{Lambda-eq}\\
=&\ - \int dy\ \{(v_N(x_1-y)+v_N(x_2-y))\rho_\Gamma(t, y)\Lambda_t(x_1, x_2)\}\nonumber\\
&\ -\int dy\ \{(v_N(x_1-y)+v_N(x_2\-y))\Lambda_t(x_1, y)\Gamma_t(y, x_2)\}\nonumber\\
&\ -\int dy\ \{(v_N(x_1-y)+v_N(x_2-y))\overline{\Gamma}_t(x_1, y)\Lambda_t(y, x_2)\}\nonumber\\
&\ + 2\int dy\ \{(v_N(x_1-y)+v_N(x_2-y))|\varphi_t(y)|^2\varphi_t(x_1)\varphi_t(x_2)\}\nonumber
\end{align}
\end{subequations}
The space $X_T$ is a Strichartz-type space equipped with a norm which is the sum of the following norms   
\begin{subequations}
\begin{align}
\vect{N}_T(\varphi) :=&\ \llp{\langle\grad_x\rangle^{\sigma}\varphi(t, x)}_{L^4[0,T]L^\infty(dx)}+\llp{\langle\grad_x\rangle^{\sigma}\varphi(t, x)}_{L^\infty[0, T]L^2(dx)}\\
\vect{N}_T(\Gamma):=&\  \llp{\langle\grad_{x, y}\rangle^{\sigma}\Gamma(t, x, y)}_{L^4[0, T]L^\infty(dx)L^2(dy)}\label{Gamma-norm}\\
&+ \llp{\langle\grad_{x, y}\rangle^{\sigma}\Gamma(t, x, y)}_{L^\infty[0, T]L^2(dxdy)}\nonumber\\
&+ \sup_{z}\llp{\grad^{1/2}_x\Gamma(t, x+z, x)}_{L^2(dtdx)}\ +\sup_z\llp{\grad^{1/2-\varepsilon}_x \Gamma(t, x+z, x)}_{L^2(dtdx)}\nonumber\\
\vect{N}_T(\Lambda):=&\ \llp{\langle \grad_{x, y}\rangle^{\sigma}\Lambda(t, x, y)}_{L^4[0, T]L^\infty(dx)L^2(dy)}\label{Lambda-norm}\\
&+\llp{\langle \grad_{x, y}\rangle^{\sigma}\Lambda(t, x, y)}_{L^\infty[0, T]L^2(dxdy)}
+\sup_z\llp{\langle\grad_x\rangle^{\sigma}
 \Lambda(t, x+z, x)}_{L^4[0, T]L^2(dx)}.\nonumber
\end{align}
\end{subequations} 
Moreover, let us denote the space of functions $(\varphi_t, \Gamma_t, \Lambda_t)$ where the above norms are finite for any $0\le T<\infty$  by $X_{\infty, \text{loc}}$. 

Let us present the main a-priori estimates of the article
\begin{thm}\label{main thm}
 Suppose $\varphi, \Gamma$ and $\Lambda$ solve (\ref{phi-eq}), (\ref{gamma-eq}) and (\ref{Lambda-eq}) respectively with
Schwartz initial condition $(\varphi_0, \Gamma_0, \Lambda_0)$. Then we have the following estimates
\begin{subequations}
\begin{align}
\vect{N}_T(\varphi) \lesssim&\ \llp{\langle \grad_{x}\rangle^{\sigma}\varphi_0}_{L^2(dx)}+T^\text{some power}\left(\vect{N}_T(\varphi)^2+\vect{N}_T(\Gamma)+\vect{N}_T(\Lambda)\right)\vect{N}_T(\varphi) \label{norm1}\\
 \vect{N}_T(\Gamma) \lesssim&\ \llp{\langle \grad_{x, y}\rangle^{\sigma}\Gamma_0}_{L^2(dxdy)}\label{norm2}
+T^\text{some power} \left(\vect{N}_T(\Gamma)^2+\vect{N}_T(\Lambda)^2+\vect{N}_T(\varphi)^4\right)\\
\vect{N}_T(\Lambda) \lesssim&\ \llp{\langle \grad_{x, y}\rangle^{\sigma}\Lambda_0}_{L^2(dxdy)}\label{norm3}
+T^\text{some power} \left(\vect{N}_T(\Gamma)\vect{N}_T(\Lambda)+\vect{N}_T(\varphi)^4\right). 
\end{align} 
\end{subequations}
In particular, there exists $T_0$ such that for all $T\leq T_0$ we have that
\begin{align*}
\vect{N}_T(X):=\vect{N}_T(\varphi)^2+\vect{N}_T(\Gamma)+\vect{N}_T(\Lambda) \lesssim\ 1.
\end{align*}
Similarly, the following estimates hold for the time derivative of $\varphi, \Gamma, \Lambda$, i.e.
\begin{subequations}
\begin{align}
\vect{N}_T(\bd_t\varphi) \lesssim&\ \Lp{\langle \grad_{x}\rangle^{\sigma}\bd_t\varphi\Big|_{t=0}}{L^2(dx)}\\
&+T^\text{some power}\left(\vect{N}_T(X)\vect{N}_T(\bd_t\varphi)+\vect{N}_T(\bd_tX)\vect{N}_T(\varphi)\right)\nonumber\\
\vect{N}_T(\bd_t\Gamma) \lesssim&\ \Lp{\langle \grad_{x, y}\rangle^{\sigma}\bd_t\Gamma\Big|_{t=0}}{L^2(dxdy)}\\
&+T^\text{some power} \vect{N}_T(X)\left(\vect{N}_T(\bd_tX)+\vect{N}_T(\varphi)\vect{N}_T(\bd_t\varphi)\right)\nonumber\\
\vect{N}_T(\bd_t\Lambda) \lesssim&\ \Lp{\langle \grad_{x, y}\rangle^{\sigma}\bd_t\Lambda\Big|_{t=0}}{L^2(dxdy)}\\
&+T^\text{some power} \vect{N}_T(X)\left(\vect{N}_T(\bd_tX)+\vect{N}_T(\varphi)\vect{N}_T(\bd_t\varphi)\right) \nonumber 
\end{align}
\end{subequations}
which again means there exists $T_0$ such that for all $T<T_0$ we have
\begin{align*}
 \vect{N}_T\left(\bd_tX\right):=\vect{N}_T\left(\bd_t\varphi\right)^2+\vect{N}_T\left(\bd_t\Gamma\right)+\vect{N}_T\left(\bd_t\Lambda\right) \lesssim 1.
\end{align*}
Indeed, for any $i, j \in \nn$ we have the estimates
\begin{align}\label{regular}
\vect{N}_T\left(\bd_t^i\grad_{x+y}^jX\right)\lesssim 1.  
\end{align}
\end{thm}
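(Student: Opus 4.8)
The plan is to run a standard Banach fixed-point/a-priori estimate argument on the coupled system \eqref{phi-eq}--\eqref{Lambda-eq}, treating each of the three equations with the linear theory developed in \textsection\ref{sec-3}--\textsection 7 and then closing the three inequalities \eqref{norm1}--\eqref{norm3} simultaneously. First I would record the Duhamel representations: $\varphi$ solves a Schr\"odinger equation with operator $\vect{S}$ (in one variable), $\Gamma$ solves $\vect{S}_\pm\Gamma = F_\Gamma$, and $\Lambda$ solves $(\vect{S} + N^{-1}v_N(x-y))\Lambda = F_\Lambda$. For the $\varphi$ equation I would use standard one-dimensional Strichartz estimates to control $\vect{N}_T(\varphi)$ by the data plus the $L^1_tL^2_x$ (or dual-admissible) norm of the forcing; each term in \eqref{phi-eq} is a product of $v_N\ast(\text{quadratic in }\varphi,\Gamma,\Lambda\text{-traces})$ with $\varphi$, and Young's inequality moves $v_N$ off in $L^1(dx)$ uniformly in $N$ (here $v\in L^1$ is what is used, not any $L^p$, $p>1$, norm), leaving products of Strichartz norms. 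For $\Gamma$ I would use Proposition \ref{Inhomog-eq}, Proposition \ref{inhomog-eq2} and Proposition \ref{gamma-stric}: the collapsing norms $\|\grad_x^{1/2}\rho_\Gamma\|_{L^2_{t,x}}$ and $\|\grad_x^{1/2-\varepsilon}\rho_\Gamma\|_{L^2_{t,x}}$ control, via Gagliardo--Nirenberg--Sobolev and Young, exactly the factor $\|v_N\ast\rho_\Gamma\|_{L^2_tL^p_x}$ that appears when estimating $F_\Gamma$ in $L^1_tL^2_{xy}$, and the Kato--Ponce inequality distributes $\langle\grad_{x,y}\rangle^\sigma$; the extra terms involving $\Lambda$ and $\varphi$ are handled the same way — this reproduces the computation already carried out in \textsection 5 verbatim, now with the additional $\Lambda$- and $\varphi$-contributions accounted for by $\vect{N}_T(\Lambda)^2$ and $\vect{N}_T(\varphi)^4$.

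The genuinely delicate equation is the one for $\Lambda$, because of the potential term $N^{-1}v_N(x-y)\Lambda$ on the left and because $N^{-1}\|v_N\|_{L^p}$ blows up for $p>1$ when $\beta$ is large. Here I would invoke Proposition \ref{trace-energy-lam} and Corollary \ref{main-cor}: working in the variables $x-y$, $x+y$ and in the Bourgain space $X^{-1/2+\delta}$, the term $N^{-1}v_N(x-y)\Lambda$ is absorbed perturbatively using only $\|v_N\|_{L^{1^+}} \sim N^{\beta/(1^+)'}$ with $1^+$ close to $1$, so $N^{-1}\|v_N\|_{L^{1^+}} \to 0$; this gives the trace norm $\|\langle\grad_x\rangle^\sigma\Lambda(t,x,x)\|_{L^4_tL^2_x}$ in \eqref{Lambda-norm}, and Proposition \ref{strichartz3} together with Remark \ref{beta} (where $\sigma$ is chosen so $1-\sigma\beta>0$) gives the Strichartz pieces, again with the potential term absorbed when $N$ is large. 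The forcing $F_\Lambda$ is estimated by putting it in $L^2_t L^{1^+}(d(x-y))L^2(d(x+y))$ and using the trace/collapsing norms of $\Gamma$ (the $\sup_z\|\grad_x^{1/2-\varepsilon}\Gamma(t,x+z,x)\|_{L^2_{t,x}}$ norms in \eqref{Gamma-norm} are exactly designed to bound $\|v_N\ast\rho_\Gamma\|$-type factors and also the off-diagonal $\Gamma(x,y)\Lambda(y,x)$ couplings after a translation in $y$), yielding the bilinear term $\vect{N}_T(\Gamma)\vect{N}_T(\Lambda)$ and the quartic $\vect{N}_T(\varphi)^4$.

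With \eqref{norm1}--\eqref{norm3} in hand, closing the bootstrap is routine: set $\vect{N}_T(X)=\vect{N}_T(\varphi)^2+\vect{N}_T(\Gamma)+\vect{N}_T(\Lambda)$, add the three inequalities, and observe the right side is bounded by (data) $+\,T^{\text{some power}}\,P(\vect{N}_T(X))$ for a polynomial $P$ with $P(0)=0$; a standard continuity argument gives $T_0>0$ with $\vect{N}_{T}(X)\lesssim 1$ for $T\le T_0$, where $T_0$ depends only on $\beta$ and the size $R$ of the data, not on $N$. The estimates for $\bd_t\varphi,\bd_t\Gamma,\bd_t\Lambda$ follow by differentiating the equations in $t$; since $\bd_t$ commutes with $\vect{S}$, $\vect{S}_\pm$ and with $N^{-1}v_N(x-y)$, the differentiated equations have the same linear structure and a forcing that is linear in $\bd_t(\varphi,\Gamma,\Lambda)$ with coefficients controlled by $\vect{N}_T(X)\lesssim 1$, so the same fixed-point machinery applies and, iterating, one gets \eqref{regular} for all $i,j$ by induction on $i+j$ (each application of $\bd_t$ or $\grad_{x+y}$ commutes with the linear operators and lands in an equation of the same type with already-controlled lower-order forcing). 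The main obstacle, as indicated, is the $\Lambda$ equation's potential term: everything hinges on the perturbative absorption in Proposition \ref{trace-energy-lam} and Remark \ref{beta}, i.e.\ on choosing $\sigma=\sigma(\beta)$ small enough that $1-\sigma\beta>0$ while still $\sigma>0$ so the fractional Leibniz estimates for the nonlinearity survive — this is precisely the $\beta$ vs.\ regularity competition flagged in the remarks after Theorem \ref{main}.
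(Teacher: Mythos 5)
Your proposal follows essentially the same route as the paper: collapsing/trace estimates plus Kato--Ponce and the $L^1(\rr)$ norm of $v_N$ for the $\varphi$ and $\Gamma$ forcings, the Bourgain-space perturbative absorption of $N^{-1}v_N(x-y)\Lambda$ via Proposition \ref{trace-energy-lam} and Remark \ref{beta} for the $\Lambda$ equation, and a continuity/bootstrap argument, with the time-derivative and higher-order bounds obtained by differentiating the equations exactly as the paper does. The only point where you are slightly loose is the treatment of the quadratic composition terms $(v_N\Gamma)\circ\Gamma$ and $(v_N\bar\Lambda)\circ\Lambda$ in the $\Gamma$ equation, which are not covered ``verbatim'' by \textsection 5: the paper handles them by the change of variables $\int dz\, |v_N(z)|\,\Gamma(x,x-z)\Gamma(x-z,y)$ together with the $\sup_z$ shifted-diagonal collapsing norms in (\ref{Gamma-norm}) --- the same translation device you correctly identify when discussing the $\Lambda$ forcing, so this is an omission of bookkeeping rather than of an idea.
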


\begin{remark}
The reader should note that the solution obtained from the Banach fixed-point theorem is smooth if the initial data $(\varphi_0, \Gamma_0, \Lambda_0)$ is sufficiently smooth. Indeed, for each fixed $N$, one can show
\begin{align*}
\vect{N}_T\left(\bd_t^i\grad_{x}^j\grad_y^kX\right)\lesssim N^\text{some non-negative power}.
\end{align*}
Despite the fact that the higher Sobolev norms are not uniformly bounded in $N$, each of the solutions has sufficient smoothness for us to apply the conservation laws which we will state later in the section.
\end{remark}

\begin{remark}
The results of Proposition \ref{collapsing-est} and Proposition \ref{Lambda-collapse} for the homogeneous equations immediately
generalize to the cases $\Gamma(t, x+z, x)$ and $\Lambda(t, x+z, x)$, for 
any fixed $z$. For instance, in the case of the homogeneous $\Gamma$ equation, we see that
\begin{align*}
[\Gamma(t, x+z, x)]\ \widetilde{} = \frac{1}{4\pi |\xi|} \widehat{\Gamma_0}\left(\frac{\xi^2-\tau}{2\xi}, \frac{\xi^2+\tau}{2\xi} \right) e^{iz(\frac{\xi^2-\tau}{2\xi})}
\end{align*}
which, in norm, yields the same estimate as in Proposition \ref{collapsing-est}. As a consequence, all the
estimates  in \textsection 4,7 for the inhomogeneous equations will also hold
for  $\Gamma(t, x+z, x)$ and $\Lambda(t, x+z, x)$. This will allow us to close estimates pertaining to the shifted-diagonal quantities in the above norms. 
\end{remark}

We split the presentation of the proof of the theorem into two subsections.

\subsection{Proofs of Estimates (\ref{norm2}) and (\ref{norm3})}
Let us first consider equation (\ref{gamma-eq}). Since the term $(v_N\ast \rho_\Gamma)\cdot\Gamma$ has already been handled in \textsection 5, 
it suffices to consider only the terms $(v_N\bar\Lambda)\circ \Lambda$ 
and $(v_N\Gamma)\circ\Gamma$. 
In particular, it suffices to consider just the derivative of the terms since any computation
for the derivatives will encompass the computation for the non-derivative terms. 

Let us first handle the term $(v_N\ast\rho_\Gamma)\cdot \Gamma$. By a direct change of variables, we can rewrite the kernel composition as follows
\begin{align*}
(v_N\Gamma\circ \Gamma)(x, y) = \int dw\ v_N(x-w)\Gamma(x, w)\Gamma(w, y) = \int dz\ v_N(z) \Gamma(x, x-z)\Gamma(x-z, y).
\end{align*}
Then by Kato-Ponce inequality we obtain the following
\begin{align*}
\llp{\grad^{\sigma}_{x,y}&[(v_N \Gamma)\circ \Gamma ]}_{L^1[0, T]L^2(dxdy)}\\
\leq&\  \int dz\ |v_N(z)| \llp{\grad^{\sigma}_{x,y} [\Gamma(x, x-z)\Gamma(x-z, y)]}_{L^1[0, T]L^2(dxdy)}\\
\lesssim&\  \int dz\ |v_N(z)|  \llp{\grad^{\sigma}_x\Gamma(x, x-z)}_{L^{2}[0, T]L^{\widetilde p}(dx)} \llp{\Gamma(x, y)}_{L^2[0, T]L^{\widetilde r}(dx)L^2(dy)}\\
&+   \int dz\ |v_N(z)|  \llp{\Gamma(x, x-z)}_{L^{2}[0, T]L^p(dx)} \llp{\grad^{\sigma}_{x}\Gamma(x, y)}_{L^2[0, T]L^{r}(dx)L^2(dy)}
\end{align*}
where $p, r, \widetilde p, \widetilde r$ are the values stated in Remark \ref{pqr}. Applying Cauchy-Schwarz inequality in the time variable gives us
\begin{align*}
\llp{\grad^{\sigma}_{x,y}&[(v_N \Gamma)\circ \Gamma ]}_{L^1[0, T]L^2(dxdy)}\\
\lesssim&\  T^\text{some power}\int dz\ |v_N(z)|  \llp{\grad^{\frac{1}{2}-\varepsilon}_{x}\Gamma(x, x-z)}_{L^{2}(dtdx)}  \llp{\Gamma(x, y)}_{L^{\widetilde q}(dt)L^{\widetilde r}(dx)L^2(dy)}\\
&+   T^\text{some power}\int dz\ |v_N(z)|  \llp{\grad^{\frac{1}{2}-\varepsilon}_x\Gamma(x, x-z)}_{L^{2}(dtdx)} \llp{\grad^{\sigma}_{x,y}\Gamma(x, y)}_{L^q(dt)L^{r}(dx)L^2(dy)}\\
\lesssim&\ T^\text{some power}\vect{N}_T(\Gamma)^2
\end{align*}
where $(q, r)$ and $(\widetilde q, \widetilde r)$ are admissible pairs. Likewise, we have
\begin{align*}
\llp{\grad^{\sigma}_{x,y}&[(v_N \bar \Lambda)\circ \Lambda ]}_{L^1[0, T]L^2(dxdy)}\\
\leq&\ \int dz\ |v_N(z)| \llp{\grad_{x, y}^{\sigma} [\bar\Lambda(x, x-z)\Lambda(x-z, y)]}_{L^1[0, T]L^2(dxdy)}\\
\lesssim&\ \int dz\ |v_N(z)| \llp{\grad_x^{\sigma}\Lambda(x, x-z)}_{L^4[0, T]L^2(dx)} \llp{\Lambda(x, y)}_{L^{4/3}[0, T]L^\infty(dx)L^2(dy)}\\
&+ \int dz\ |v_N(z)| \llp{\Lambda(x, x-z)}_{L^4[0, T]L^2(dx)}\llp{\grad_{x}^{\sigma} \Lambda(x, y)}_{L^{4/3}[0, T]L^\infty(dx)L^2(dy)}\\
\lesssim&\ T^{1/2}\vect{N}_T(\Lambda)^2.
\end{align*}

As for equation (\ref{Lambda-eq}), there are essentially three terms we need to estimate,
namely $(v_N\ast \rho_\Gamma)\Lambda, (v_N\Gamma)\circ\Lambda$ and $(v_N\Lambda)\circ\Gamma$. 
Similar to the handling of the nonlinear terms for the $\Gamma$ equation, 
it suffices to look at just the derivatives of the nonlinear terms. 

For the first term, observe we have
\begin{align*}
\llp{\grad^{\sigma}_{x+y}[(v_N\ast\rho_\Gamma)\Lambda]&}_{L^{2}[0, T]L^{1+}(d(x-y))L^2(d(x+y))}\\
\lesssim&\ \llp{v_N\ast\rho_\Gamma}_{L^{2}[0, T]L^{2+}(dx)}\llp{\grad^{\sigma}_{x+y}\Lambda}_{L^{\infty}[0, T]L^{2}(d(x-y))L^2(d(x+y))}\\
&+\ \llp{v_N\ast\grad^{\sigma}_x\rho_\Gamma}_{L^{2}[0, T]L^{2+}(dx)}\llp{\Lambda}_{L^{\infty}[0, T]L^{2}(d(x-y))L^2(d(x+y))}\\
\lesssim&\ \llp{\grad^{\frac{1}{2}-\varepsilon}_x \rho_\Gamma}
_{L^{2}(dtdx)}\llp{\grad^{\sigma}_{x+y}
\Lambda}_{L^{\infty}[0, T]L^{2}(d(x-y))L^2(d(x+y))}\\
&+ \llp{\grad^{\frac{1}{2}-\varepsilon}_x \rho_\Gamma}_{L^{2}(dtdx)}
\llp{\Lambda}_{L^{\infty}[0, T]L^{2}(d(x-y))L^2(d(x+y))}\\
\lesssim&\ \vect{N}_T(\Gamma)\vect{N}_T(\Lambda)
\end{align*}
The terms $F=(v_N\Lambda)\circ\Gamma$ and $\Lambda\circ(v_N\Gamma)$ are handled similarly.

\subsection{Proof of Estimate (\ref{norm1})}
Let us begin by stating the following Strichartz estimate
\begin{prop}
Suppose $\varphi$ is a solution to $\vect{S}\varphi = F$ with initial condition $\varphi_0$ and let $(k, \ell)$ be an admissible pair. Then  it follows for all $\alpha>0$ we have
\begin{align}
\llp{\grad_x^\alpha\varphi}_{L^k[0, T]L^\ell(dx)} \lesssim \llp{\grad_x^\alpha\varphi_0}_{L^2(dx)}+\llp{\grad_x^\alpha F}_{L^{4/3}[0, T]L^{1}(dx)}.
\end{align}
\end{prop}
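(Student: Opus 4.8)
The plan is to reduce the claim to the standard one-dimensional inhomogeneous Strichartz estimate for the free Schr\"odinger propagator. Since $\grad_x^\alpha$ is a Fourier multiplier in the spatial variable, it commutes with the constant-coefficient operator $\vect{S}$, which on functions of $(t,x)$ alone is simply $\tfrac{1}{i}\bd_t-\lapl_x$. Hence, putting $u:=\grad_x^\alpha\varphi$, $u_0:=\grad_x^\alpha\varphi_0$ and $G:=\grad_x^\alpha F$, we have $\vect{S}u=G$ with $u|_{t=0}=u_0$, and it is enough to prove
\begin{align*}
\llp{u}_{L^k[0,T]L^\ell(dx)}\lesssim\llp{u_0}_{L^2(dx)}+\llp{G}_{L^{4/3}[0,T]L^1(dx)}.
\end{align*}
By Duhamel's formula $u(t)=e^{it\lapl_x}u_0+i\int_0^t e^{i(t-s)\lapl_x}G(s)\,ds$, and the homogeneous piece is bounded by $\llp{u_0}_{L^2(dx)}$ via the standard non-endpoint homogeneous Strichartz estimate $\llp{e^{it\lapl_x}f}_{L^k(dt)L^\ell(dx)}\lesssim\llp{f}_{L^2(dx)}$ for admissible $(k,\ell)$ (the same $TT^\ast$ argument as in Proposition \ref{strichartz1}, specialized to functions independent of $y$).

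For the Duhamel term I would argue exactly as in the proof of the classical non-endpoint inhomogeneous Strichartz estimate (cf. \textsection 2.3 of \cite{Tao}). Factoring $e^{i(t-s)\lapl_x}=e^{it\lapl_x}\bigl(e^{is\lapl_x}\bigr)^\ast$ through $L^2_x$, the homogeneous Strichartz estimate together with its dual gives the \emph{untruncated} bound
\begin{align*}
\Lp{\int_{\rr}e^{i(t-s)\lapl_x}G(s)\,ds}{L^k(dt)L^\ell(dx)}\lesssim\llp{G}_{L^{\widetilde k'}(dt)L^{\widetilde\ell'}(dx)},
\end{align*}
valid for any two admissible pairs $(k,\ell)$ and $(\widetilde k,\widetilde\ell)$. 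Choosing $(\widetilde k,\widetilde\ell)=(4,\infty)$, which is admissible since $\tfrac{2}{4}+\tfrac{1}{\infty}=\tfrac{1}{2}$, its H\"older conjugate is $(\widetilde k',\widetilde\ell')=(\tfrac{4}{3},1)$, precisely the forcing norm in the statement. To replace the integral over $\rr$ by the retarded integral $\int_0^t$ one invokes the Christ--Kiselev lemma, which applies here because $k>\widetilde k'=\tfrac{4}{3}$ for every admissible $k\in(2,\infty]$; this is exactly where the non-endpoint hypothesis is used. Combining with the homogeneous estimate finishes the proof, and the same bound holds on $[0,T]$ after extending $G$ by zero outside that interval.

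The result is a textbook Strichartz bound, so there is no real obstacle; the only things that need attention are (i) checking that $(4,\infty)$ qualifies as an admissible pair in the sense of (\ref{ad-pair}), so that both the inhomogeneous $TT^\ast$ estimate and the Christ--Kiselev truncation go through with the forcing norm $L^{4/3}L^1$, and (ii) observing that $\grad_x^\alpha$ and the free propagator are simultaneously diagonalized by the spatial Fourier transform, so the derivative may be moved freely onto $\varphi_0$ and $F$. Since $d=1$ and the solution-side exponents stay away from the $L^2_tL^\infty_x$ endpoint, the Keel--Tao endpoint machinery is not needed.
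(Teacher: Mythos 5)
Your proof is correct and is exactly the standard argument the paper has in mind: the paper states this proposition without proof, relying on the same non-endpoint Strichartz machinery ($TT^\ast$ plus Christ--Kiselev, with the dual pair $(4,\infty)\mapsto(4/3,1)$) that it cites from \textsection 2.3 of \cite{Tao} for Proposition \ref{strichartz1}. Your added checks --- that $(4,\infty)$ satisfies (\ref{ad-pair}), that $k>4/3$ so Christ--Kiselev applies, and that $\grad_x^\alpha$ commutes with the propagator --- are all accurate.
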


It suffice to consider only $(v_N\ast\rho_\Gamma)\cdot \varphi$ and $(v_N\Lambda)\circ \varphi$ since
the method applies word-for-word to the remaining nonlinear terms. 

For the first nonlinearity, we apply Kato-Ponce inequality to get the estimate
\begin{align*}
\llp{\grad_x^{\sigma}[(v_N\ast\rho_\Gamma)\cdot \varphi]}_{L^{4/3}[0, T]L^1(dx)} \lesssim&\  \llp{v_N\ast\grad_x^{\sigma}\rho_\Gamma}_{L^{4/3}[0, T]L^2(dx)}\llp{\varphi}_{L^{\infty}[0, T]L^2(dx)}\\
&\ + \llp{v_N\ast\rho_\Gamma}_{L^{4/3}[0, T]L^2(dx)}\llp{\grad_x^{\sigma}\varphi}_{L^{\infty}[0, T]L^2(dx)}\\
\lesssim&\ T^\text{some power}\vect{N}_T(\Gamma)\vect{N}_T(\varphi).
\end{align*}

For the second nonlinear term, we have
\begin{align*}
\llp{\grad_x^{\sigma}&[(v_N\Lambda)\circ \varphi]}_{L^{4/3}[0, T]L^1(dx)} \\
\lesssim& \int dz\ |v_N(z)|\llp{\grad_x^{\sigma}\Lambda(x, x-z)}_{L^{4/3}[0, T]L^2(dx)}\llp{\varphi(x-z)}_{L^\infty[0, T]L^2(dx)}\\
&+\ \int dz\ |v_N(z)|\llp{\Lambda(x, x-z)}_{L^{4/3}[0, T]L^2(dx)}\llp{\grad_x^{\sigma}\varphi(x-z)}_{L^\infty[0, T]L^2(dx)}\\
\lesssim&\ T^\text{some power}\vect{N}_T(\Lambda)\vect{N}_T(\varphi).
\end{align*}

\subsection{Global Well-Posedness of the TDHFB Equations}
In this subsection, we prove the global well-posedness of the TDHFB equations. Let us begin by
recalling the number and energy conservation laws derived in \textsection 9 of 
\cite{GrillakisMachedon2015}\footnote{cf. Corollary 2.7. and Theorem 2.8 in \cite{BaBreCFSig}}. Recall the total particle number is given by
\begin{align}
 \mathcal{N}:= N\int dx\ \rho_\Gamma(t, x) 
\end{align}
and the energy is defined by
\begin{align}
 \mathcal{E}& :=\ N\bigg\{\int dx\ |\grad_x\varphi(x)|^2+ \frac{1}{2N}\int dxdy\ |\grad_{x,y}\sh(k)(x,y)|^2  \\
&+ \frac{1}{2N}\int dxdydz\ v_N(x-y)|\varphi(x)\sh(k)(y, z)+\varphi(y)\sh(k)(x, z)|^2\nonumber\\
&+ \frac{1}{4}\int dxdy\ v_N(x-y)\left\{ 2|\Lambda(x, y)|^2+|\Gamma(x, y)|^2+\Gamma(x, x)\Gamma(y, y) \right\}\bigg\}  \nonumber
\end{align}
Note that we have suppress the dependence on $t$ in $\mathcal{E}$ for the sake of compactness of notation.

\begin{thm}[Conservation Laws]\label{conservation}
Suppose $(\varphi_t, \Gamma_t, \Lambda_t)$ solves the TDHFB equations and $v \in L^1(\rr)\cap C^\infty(\rr)$. Then the total particle number and energy is conserved.
\end{thm}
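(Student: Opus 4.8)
The plan is to reduce both identities to the case of smooth, rapidly decaying solutions, verify them there by direct computation, and then pass to general initial data by approximation. Recall that estimate (\ref{regular}) of Theorem \ref{main thm} guarantees that for sufficiently regular initial data the solution $(\varphi_t,\Gamma_t,\Lambda_t)$ retains enough smoothness and decay on every compact time interval to justify differentiating $\mathcal{N}$ and $\mathcal{E}$ under the integral sign, performing the integrations by parts below without boundary contributions, and interchanging the order of integration in the triple integrals appearing in $\mathcal{E}$. I would first prove the conservation laws for such smooth solutions and only at the end remove the extra regularity.

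\emph{Conservation of $\mathcal{N}$.} Differentiating and using (\ref{gamma-eq}), $\tfrac{1}{i}\tfrac{d}{dt}\mathcal{N}=N\int dx\,\big\{(\lapl_{x_1}-\lapl_{x_2})\Gamma_t(x_1,x_2)+F_\Gamma(x_1,x_2)\big\}\big|_{x_1=x_2=x}$, where $F_\Gamma$ denotes the right-hand side of (\ref{gamma-eq}). The key observation is that every term of $F_\Gamma$ carries the factor $v_N(x_1-y)-v_N(x_2-y)$, which vanishes identically on the diagonal $x_1=x_2$; hence $F_\Gamma|_{x_1=x_2=x}\equiv 0$. For the dispersive part, integrating by parts twice moves the Laplacians onto the delta, $\int dx\,(\lapl_{x_1}-\lapl_{x_2})\Gamma_t|_{x_1=x_2=x}=\int dx_1dx_2\,\Gamma_t(x_1,x_2)(\partial_{x_1}^2-\partial_{x_2}^2)\delta(x_1-x_2)=0$, since $(\partial_{x_1}+\partial_{x_2})\delta(x_1-x_2)=0$ and $\partial_{x_1}^2-\partial_{x_2}^2=(\partial_{x_1}-\partial_{x_2})(\partial_{x_1}+\partial_{x_2})$. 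Therefore $\tfrac{d}{dt}\mathcal{N}=0$.

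\emph{Conservation of $\mathcal{E}$.} Here the point is the Hamiltonian structure: the TDHFB system (\ref{TDHFB}) is exactly the flow generated by $\mathcal{E}$ for the symplectic form used in \cite{GrillakisMachedon2015}, so formally $\tfrac{d}{dt}\mathcal{E}=0$. To make this rigorous one differentiates $\mathcal{E}$ term by term, substitutes (\ref{phi-eq})--(\ref{Lambda-eq}), integrates by parts, and checks that all contributions cancel in pairs: the time derivatives of the kinetic terms $\int|\grad_x\varphi|^2$ and $\tfrac{1}{2N}\int|\grad_{x,y}\sh(k)|^2$ cancel the contributions of the dispersive factors in the equations, while the $-\tfrac1N\kappa(\Lambda_t)$ term in the $\Lambda$ equation (equivalently the $\tfrac1N v_N(x_1-x_2)$ coefficient on the left of (\ref{Lambda-eq})) is matched by the $\tfrac14\int v_N(x-y)\{2|\Lambda(x,y)|^2\}$ piece of $\mathcal{E}$. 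This computation is carried out in \textsection 9 of \cite{GrillakisMachedon2015} (see also \cite{BaBreCFSig}); since our smooth solutions satisfy the same equations with the regularity needed to run the integrations by parts, the identity $\tfrac{d}{dt}\mathcal{E}=0$ follows verbatim.

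\emph{Removing the extra regularity.} For a general admissible triple, I would approximate it in the relevant topology by smooth, rapidly decaying data; by Theorem \ref{main} and the continuous dependence built into the fixed-point argument, the corresponding solutions converge on $[0,T]$ in $X_T$. Since $\mathcal{N}$ and $\mathcal{E}$ are continuous functionals on $\mathcal{X}^\sigma$ (together with one extra derivative), the cubic and quartic $v_N$-terms being controlled by one-dimensional Sobolev embedding and $v\in L^1(\rr)$, one passes to the limit in the identities $\mathcal{N}_n(t)=\mathcal{N}_n(0)$ and $\mathcal{E}_n(t)=\mathcal{E}_n(0)$. I expect the main obstacle to be twofold: the bookkeeping in the energy computation, namely keeping track of the full list of terms and their pairwise cancellations (already settled in \cite{GrillakisMachedon2015}), and establishing the continuity of $\mathcal{E}$ on the low-regularity data space; the particle-number conservation and the dispersive estimates are comparatively routine.
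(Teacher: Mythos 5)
Your proposal is correct and follows essentially the same route as the paper, whose entire proof is a citation to the computation in Grillakis--Machedon \cite{GrillakisMachedon2015}; your added details (the diagonal vanishing of the $v_N(x_1-y)-v_N(x_2-y)$ forcing plus the integration by parts for $\mathcal{N}$, and the density argument for low-regularity data) are correct elaborations of what that citation entails. The paper handles the regularity issue the same way, noting that for smooth data the fixed-point solution is smooth enough (with $N$-dependent bounds, which suffice) to justify the formal computation.
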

\begin{proof}
See \textsection 8 in \cite{GrillakisMachedon2015}.
\end{proof}

As an immediate corollary of Theorem \ref{conservation}, we have
\begin{cor}
 Let  $(\varphi_t, \Gamma_t, \Lambda_t)$ be a solution to the TDHFB equations. Then there exists a constant $C>0$ such that for any $T>0$ and $0<s<1$ we have that
\begin{align}
 \sup_{t\in [0, T]} \llp{(\varphi_t,\Gamma_t, \Lambda_t)}_{\mathcal{X}^s} \leq C,
\end{align}
independent of $N$. 
\end{cor}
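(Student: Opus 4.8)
The plan is to read the bound directly off the two conservation laws of Theorem~\ref{conservation}. Recall from \textsection\ref{notation} that a solution carries associated operators $\sh(k_t),\ch(k_t)$ with
\[
\Gamma_t = |\varphi_t\rangle\langle\varphi_t| + \tfrac1N\,\sh(k_t)\circ\overline{\sh(k_t)}, \qquad \Lambda_t = \varphi_t\otimes\varphi_t + \tfrac1N\,\sh(k_t)\circ\ch(k_t) = \varphi_t\otimes\varphi_t + \tfrac1{2N}\,\sh(2k_t),
\]
the last equality by (\ref{trig-1}); moreover $\ch(k_t)$ is self-adjoint and, by (\ref{trig-2}), satisfies $\ch(k_t)\circ\ch(k_t) = \delta + \overline{\sh(k_t)}\circ\sh(k_t) = \delta + \sh(k_t)^\ast\sh(k_t)$. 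Evaluated on the initial data, the conserved quantities $\mathcal{N}$ and $\mathcal{E}$ are of order $N$, uniformly in $N$ (this is the natural normalization of the data in force throughout; equivalently, the constant $C$ in the statement is controlled by $\mathcal{N}/N$ and $\mathcal{E}/N$), and by Theorem~\ref{conservation} this persists for all $t\ge 0$.

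First I would extract $H^1$-control of $\varphi_t$ and of $\sh(k_t)$. Since $v\ge 0$, every interaction term in $\mathcal{E}$ is non-negative and may be discarded, leaving $\|\grad_x\varphi_t\|_{L^2(\rr)}^2 \le \mathcal{E}/N \lesssim 1$ and $\|\grad_{x,y}\sh(k_t)\|_{L^2(dxdy)}^2 \le 2\mathcal{E} \lesssim N$. From $\rho_{\Gamma_t}(x) = |\varphi_t(x)|^2 + \tfrac1N\int |\sh(k_t)(x,w)|^2\,dw$ one reads off $\mathcal{N} = N\|\varphi_t\|_{L^2(\rr)}^2 + \|\sh(k_t)\|_{\mathcal{L}^2}^2$, hence $\|\varphi_t\|_{L^2(\rr)}^2 \le \mathcal{N}/N \lesssim 1$ and $\|\sh(k_t)\|_{\mathcal{L}^2}^2 \le \mathcal{N} \lesssim N$. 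In particular $\sup_t\|\varphi_t\|_{H^1(\rr)} \lesssim 1$, uniformly in $N$.

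Next I would transfer these to $\Gamma_t$ and $\Lambda_t$ using only $\|A\circ B\|_{\mathcal{L}^2} \le \min\{\|A\|_{\mathrm{op}}\|B\|_{\mathcal{L}^2},\,\|A\|_{\mathcal{L}^2}\|B\|_{\mathrm{op}}\}$, $\|A\|_{\mathrm{op}} \le \|A\|_{\mathcal{L}^2}$, $\grad_x(A\circ B) = (\grad_x A)\circ B$, and the bound $\|\ch(k_t)\|_{\mathrm{op}}^2 \le 1 + \|\sh(k_t)\|_{\mathrm{op}}^2 \le 1 + \mathcal{N} \lesssim N$ coming from the identity above. These give $\|\tfrac1N\sh(k_t)\circ\overline{\sh(k_t)}\|_{\mathcal{L}^2} \lesssim \|\sh(k_t)\|_{\mathcal{L}^2}^2/N \lesssim 1$, $\|\tfrac1N\grad_x(\sh(k_t)\circ\overline{\sh(k_t)})\|_{\mathcal{L}^2} \lesssim \|\grad_x\sh(k_t)\|_{\mathcal{L}^2}\|\sh(k_t)\|_{\mathcal{L}^2}/N \lesssim 1$, and $\|\tfrac1{2N}\sh(2k_t)\|_{\mathcal{L}^2} + \|\tfrac1{2N}\grad_x\sh(2k_t)\|_{\mathcal{L}^2} \lesssim \bigl(\|\sh(k_t)\|_{\mathcal{L}^2}+\|\grad_x\sh(k_t)\|_{\mathcal{L}^2}\bigr)\|\ch(k_t)\|_{\mathrm{op}}/N \lesssim 1$; in each case the two $O(\sqrt N)$ factors are absorbed by the prefactor $N^{-1}$. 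The $\grad_y$-derivatives have the same $L^2(dxdy)$-norms, since the kernel of $\sh(2k_t)$ is symmetric and that of $\overline{\sh(k_t)}\circ\sh(k_t)$ is Hermitian. Adding the rank-one pieces $|\varphi_t\rangle\langle\varphi_t|$ and $\varphi_t\otimes\varphi_t$, whose $\mathcal{X}^1$-norms are controlled by $\|\varphi_t\|_{H^1(\rr)}$, yields $\sup_t\bigl(\|\Gamma_t\|_{\mathcal{X}^1} + \|\Lambda_t\|_{\mathcal{X}^1}\bigr) \lesssim 1$, and together with the previous paragraph $\sup_t\|(\varphi_t,\Gamma_t,\Lambda_t)\|_{\mathcal{X}^1} \lesssim 1$, uniformly in $N$.

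Finally, interpolating the $\mathcal{X}^0$ (mass) and $\mathcal{X}^1$ bounds — using $\|\langle\grad\rangle^s f\|_{L^2}^2 \le \|f\|_{L^2}^{2(1-s)}\|\langle\grad\rangle f\|_{L^2}^{2s}$ and its two-variable analogue (Hölder in frequency) — gives, for every $0<s<1$, $\|(\varphi_t,\Gamma_t,\Lambda_t)\|_{\mathcal{X}^s} \lesssim \|(\varphi_t,\Gamma_t,\Lambda_t)\|_{\mathcal{X}^0}^{1-s}\|(\varphi_t,\Gamma_t,\Lambda_t)\|_{\mathcal{X}^1}^{s} \lesssim 1$, uniformly in $t$, $s$ and $N$, which is the claim. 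The delicate point is the transfer step: the quantities $\|\sh(k_t)\|_{\mathcal{L}^2}$, $\|\grad_{x,y}\sh(k_t)\|_{\mathcal{L}^2}$ and $\|\ch(k_t)\|_{\mathrm{op}}$ are each only $O(\sqrt N)$, so it is essential that they occur in $\Gamma_t-|\varphi_t\rangle\langle\varphi_t|$ and $\Lambda_t-\varphi_t\otimes\varphi_t$ with the explicit prefactor $N^{-1}$, which exactly absorbs a product of two of them; a secondary subtlety is to route the $\grad_y$-estimates on $\Lambda_t$ through $\tfrac1{2N}\sh(2k_t)$ and kernel symmetry so that one never differentiates the $\delta$-kernel inside $\ch(k_t)$.
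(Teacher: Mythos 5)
Your argument is correct and follows essentially the same route as the paper: both extract uniform $L^2$ and $\dot H^1$ control of $\varphi_t$ and $\sh(k_t)$ from the conservation of particle number and energy, transfer it to $\Gamma_t$ and $\Lambda_t$ through the factorizations $\Gamma_t-|\varphi_t\rangle\langle\varphi_t| = \frac{1}{N}\overline{\sh(k_t)}\circ\sh(k_t)$ and $\Lambda_t-\varphi_t\otimes\varphi_t=\frac{1}{N}\sh(k_t)\circ\ch(k_t)$ so that the prefactor $N^{-1}$ absorbs the product of two $O(\sqrt N)$ factors, and conclude by interpolation for $0<s<1$. The only cosmetic difference is in controlling $\ch(k_t)$: you bound $\|\ch(k_t)\|_{\mathrm{op}}$ via $\ch\circ\ch=\delta+\overline{\sh}\circ\sh$, whereas the paper writes $\ch(k)=\delta+p(k)$ and bounds $\|p(k)\|_{\mathcal{L}^2}\le\|\sh(k)\|_{\mathcal{L}^2}$ from $p\circ p+2p=\overline{\sh}\circ\sh$ and positivity of $p$ --- both land on the same estimate.
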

\begin{proof}
 The estimate for $\varphi_t$ follows immedately by interpolating between the conservation of total particle number 
and conservation of energy. 
Next, applying Cauchy-Schwarz and the conservation of total particle number, we obtain the estimate
\begin{align}\label{con-est}
 \llp{\Gamma(t, \cdot)}_{L^2(dxdy)}\leq \llp{\varphi_t}_{L^2(dxdy)}^2
+\frac{1}{N}\llp{\sh(k_t)}_{L^2(dx)}^2 \lesssim 1.
\end{align}
Similarly, using Cauchy-Schwarz and the conservation of energy, we obtain
\begin{align}\label{con-est2}
 &\llp{\grad_x \Gamma(t, \cdot)}_{L^2(dxdy)} \\
&\leq \llp{\varphi_t}_{L^2(dx)}\llp{\grad_x\varphi_t}_{L^2(dx)}+\frac{1}{N}\llp{\sh(k_t)}_{L^2(dxdy)}
\llp{\grad_x\sh(k_t)}_{L^2(dxdy)}\lesssim 1\nonumber.
\end{align}
Interpolating (\ref{con-est}) and (\ref{con-est2}) yields a desired bound for $\Gamma_t$.

To uniformly bound $\Lambda_t$, we use the trig identity (\ref{trig-1}) to get the estimate
\begin{align}
\llp{\Lambda(t, \cdot)}_{L^2(dxdy)} \leq&\ \llp{\varphi_t}^2_{L^2(dx)}+\frac{1}{N}\llp{\sh(k_t)}_{L^2(dxdy)}\\
&\ +\frac{1}{N}\llp{\sh(k_t)}_{L^2(dxdy)}\llp{p(k_t)}_{L^2(dxdy)}.\nonumber
\end{align}
By identity (\ref{trig-2}), we see that $p\circ p+2p = \overline{\sh}\circ \sh$ which means
\begin{align*}
 \llp{p(k)}^2_{L^2(dx)}\leq \llp{p\circ p+2p}_{\Tr}  =\llp{\sh(k)}^2_{L^2(dxdy)}
\end{align*}
since $p(k)(x, x)\geq 0$. Hence by the conservation of total particle number we have that 
\begin{align*}
\llp{\Lambda(t, \cdot)}_{L^2(dxdy)} \lesssim 1.
\end{align*}
Similarly, we can show that $\llp{\grad_x \Lambda(t, \cdot)}_{L^2(dxdy)} \lesssim 1$. 
\end{proof}

\newcommand{\etalchar}[1]{$^{#1}$}
\providecommand{\bysame}{\leavevmode\hbox to3em{\hrulefill}\thinspace}
\providecommand{\MR}{\relax\ifhmode\unskip\space\fi MR }
% \MRhref is called by the amsart/book/proc definition of \MR.
\providecommand{\MRhref}[2]{%
  \href{http://www.ams.org/mathscinet-getitem?mr=#1}{#2}
}
\providecommand{\href}[2]{#2}

\end{document}